\theoremstyle{plain}
\newtheorem{theorem}{Theorem}
\newtheorem{corollary}{Corollary}
\newtheorem{proposition}{Proposition}
\newtheorem{lemma}{Lemma}
\theoremstyle{definition}
\newtheorem{definition}{Definition}
\newtheorem{remark}{Remark}
\newtheorem*{ack}{Acknowledgements}
\def\tr{\mathrm{Tr}}
\begin{document}

\title[Tilings, traces and triangles]
      {Tilings and traces}
\author{Rodrigo Trevi\~no }
\address{University of Maryland, College Park}
\email{rodrigo@umd.edu}
\date{  \today}
      \begin{abstract}
This paper deals with (globally) random substitutions on a finite set of prototiles. Using renormalization tools applied to objects from operator algebras we establish upper and lower bounds on the rate of deviations of ergodic averages for the uniquely ergodic $\mathbb{R}^d$ action on the tiling spaces obtained from such tilings. We apply the results to obtain statements about the convergence rates for integrated density of states for random Schr\"odinger operators obtained from aperiodic tilings in the construction.
      \end{abstract}
      \maketitle
      \section{Introduction}
Consider the two substitution and expansion rules defined on the half hexagons in Figure \ref{fig:HalfHexes}, one of which is the classical half hexagon substitution rule and the second one is obtained by modifying the square (second iteration) of it.      
This paper is concerned about the random application of substitution and expansion rules such as these in order to construct aperiodic tilings of $\mathbb{R}^d$, the study of the statistical properties of such tilings, and an application to the study of random Schr\"odinger operators on quasicrystals. Figure \ref{fig:randomPenrose} gives an example of the types of tilings one can get through random application of the substitution rules in Figure \ref{fig:HalfHexes}.
      \begin{figure}[h]
        \centering
        \includegraphics[width = 4.5in]{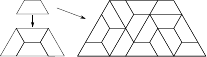}
        \caption{ Two substitution and expansion rules on half hexagons.}
        \label{fig:HalfHexes}
      \end{figure}

      Although first introduced in \cite{GL:random}, interest in random substitution tilings has surged recently, (e.g. \cite{FrankSadun:fusion, GM:mixed, BD:sadic, Rust:uncountable, RS:random, ST:random}). Random substitutions come in two flavors: locally random constructions (e.g. \cite{RS:random}) and globally random constructions (e.g. \cite{GM:mixed, ST:random}). The typical features of globally random tilings are repetitivity, uniform patch frequencies (equivalent to unique ergodicity, see \S\ref{sec:tilings}) and zero entropy whereas locally random tilings typically have positive entropy and non-uniform patch frequencies. This distinction is similar to that between strictly ergodic and mixing subshifts. All of the constructions in this paper are of the globally random flavor and so, although it will not be stated repeatedly that they are of global type, the reader should assume so throughout paper.

      The present work can be seen as an extension or alternative to the work \cite{ST:random}. It is an extension because the class of functions for which theorems are proved here (Lipschitz functions) is much larger than the class of functions treated in that work (smooth transversally locally constant functions). It is also an alternative because the present paper develops new tools combining ideas of renormalization with objects from operator algebras. More specifically, an object called the \textbf{trace cocycle} is introduced, developed, and used here to obtain results on deviations of ergodic integrals for Lipschitz functions on tiling spaces coming from random substitutions. This new tool makes it possible to connect some of the invariants from AF algebras (the traces) with invariants (also traces) from certain ``smooth'' sub-algebras of the so-called algebras of \emph{random Schr\"odinger operators} on aperiodic tilings while giving errors of convergence rates for the Shubin-Bellissard formula.

      What both of these approaches have in common is the use spaces of Bratteli diagrams to organize tilings which can be constructed from applications of substitution rules defined on the same set of prototiles, and the use of subshifts as a ``moduli space'' of all tilings which can be obtained from a finite set of substitution rules, whereon the shift dynamics become renormalization dynamics. In \cite{ST:random} the topology of the resulting tiling spaces was well-studied and exploited to obtain statistical results for the tilings.

      In this paper, the renormalization approach is applied to certain invariants (the traces) from operator algebras to study the properties of the random substitution tilings, although they are close in spirit to the tools used by Bufetov in his study of deviation of ergodic integrals for several classes of systems \cite{bufetov:limit, bufetov:limitVershik, bufetov-solomyak:tilings}. What is gained from this point of view is that there is no need to have a full understanding of the topology of the tiling spaces constructed at random, making computations easier to make, as section \ref{sec:experiments} here demonstrates; what is lost is the access to topological information of the tiling spaces constructed in the construction.
      

     
\begin{figure}[t]
 \centering
 \includegraphics[width = 6.5in]{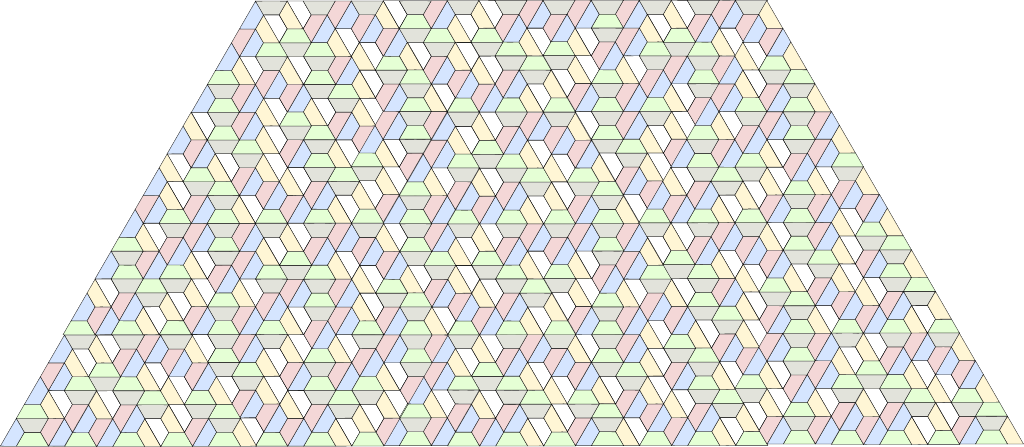}
  \caption{{\tiny A patch obtained from random applications of the half hex substitution and expansion rules in Figure \ref{fig:HalfHexes}.}}
  \label{fig:randomPenrose}
\end{figure}
Here progress is also made with the issue of boundary effects. By ``boundary effects'' I mean the following: in most studies of uniquely ergodic  $\mathbb{R}^d$-actions on metric spaces, when $d>1$, it has been usually hard to obtain information of the error terms of ergodic integrals of functions over sets of volume $\sim T^d$ which are smaller than $T^{d-1}$, which is the contribution of the boundary of the averaging set to the integral (\cite{sadun:exact, bufetov-solomyak:tilings, ST:SA, ST:random}).
These issues have been overcome in other settings of higher rank abelian actions (e.g. \cite{CosentinoFlaminio}), but they have remained an obstacle in the study of tilings. In this paper I show that given some set $B$ there is a set arbitrarily-close set $B_\varepsilon$ and a set of dilations of $B_\varepsilon$ such that the deviation behavior along those averaging sets are fully described by the Lyapunov spectrum of our renormalization cocycle. As the title suggests and it was suggested above, functionals from operator algebras called \textbf{traces} play a prominent role here, being the analogue to cycles in Zorich's theory \cite{zorich-leaves}, currents in Forni's theory \cite{forni:deviation}, and finitely-additive measures in Bufetov's theory\footnote{Ian Putnam recently pointed out to me that \cite[Theorem 2.1]{BowenFranks:homology} shows that the space of traces considered here and the space of finitely additive measures which Bufetov considers are isomorphic.} \cite{bufetov:limit}. Our cocycle is defined on a bundle of traces analogous to the cohomology bundle used for the Kontsevich-Zorich cocycle.

  
      Given that aperiodic tilings serve as models for quasicrystals, the results on deviations of ergodic averages here have several applications in mathematical physics. The advantage here of using an operator algebra approach is that it makes the connection to the study of random Schr\"odinger operators more natural.   In \cite{ST:traces} it was shown that asymptotic properties of traces of random Schr\"odinger operators defined by certain self-affine aperiodic tilings are controlled by traces obtained through the behavior of ergodic integrals on the tiling space. Here a generalization is made and the connection is made more explicit: since traces on locally finite subalgebras of AF algebras control the behavior of the ergodic integrals for randomly constructed tilings, one can obtain traces on algebras of operators which control the asymptotic properties of the integrated density of states for so-called random Schr\"odinger operators.
\subsection{Statement of results}
Let $\Sigma_N$ be the full $N$-shift, that is, the space of bi-infinite sequences of symbols from an alphabet of $N$ symbols. Given a set of prototiles $\{t_1,\dots, t_N\}$ and $N$ uniformly expanding and compatible substitution rules $\mathcal{F} = \{\mathcal{F}_1,\dots, \mathcal{F}_N\}$ on them (see the precise definition of substitution rule in \S \ref{sec:tilings}), there is a subshift of finite type $X_\mathcal{F}\subset \Sigma_N$ which parametrizes all the tiling spaces which can be obtained by random applications of the substitution rules in $\mathcal{F}$: given $x\in X_\mathcal{F}$ there is a corresponding compact metric space (called a \textbf{tiling space}) $\Omega_x$ whose elements are tilings with heirarchical structure dictated by the point $x$ according to the substitution rules in $\mathcal{F}$. Periodic points in $X_\mathcal{F}$ give rise to tiling spaces $\Omega_x$ consisting of self-similar tilings.

The tiling spaces admit a $\mathbb{R}^d$ action which is denoted by $\varphi_t:\Omega_x\rightarrow \Omega_x$ and for many of them this action is minimal and uniquely ergodic (this will be the scenario considered in this paper; see Proposition \ref{prop:UE}). The concept of a \textbf{minimal measure} is used here (see \S \ref{sec:blow} for the precise definition), and this roughly means that $\mu$ on $X_\mathcal{F}\subset \Sigma_N$ is minimal if for $\mu$-almost every $x\in X_\mathcal{F}$, $\Omega_x$ admits a minimal $\mathbb{R}^d$ action. The shift map $\sigma:X_\mathcal{F}\rightarrow X_\mathcal{F}$ defines a homeomorphism $\Phi_x:\Omega_x\rightarrow \Omega_{\sigma(x)}$ of tiling spaces. As such, the shift drives the renormalization dynamics.

The way of constructing $\Omega_x$ from $x$ is through a Bratteli diagram $\mathcal{B}_x$: a point $x\in X_\mathcal{F}$ establishes how a sequence of substitutions from the family $\mathcal{F}$ are put together to obtain a tiling, and this sequence is represented by an infinite directed graph $\mathcal{B}_x$ whose structure is tied to that of $\Omega_x$. As such, any point $x\in X_\mathcal{F}$ defines a $*$-algebra $LF(\mathcal{B}_x^+)$, called a \textbf{locally finite algebra} (this is defined in \S \ref{sec:LF}), which is dense in an approximately finite dimensional (AF) $C^*$-algebra $AF(\mathcal{B}^+_x)$. The dual of $LF(\mathcal{B}_x^+)\subset AF(\mathcal{B}^+_x)$ is the \textbf{trace space} of $LF(\mathcal{B}_x^+)$, which is a finite dimensional vector space over $\mathbb{C}$. Here, a trace $\tau$ on a $*$-algebra $\mathcal{A}$ is taken to be any linear map $\tau:\mathcal{A}\rightarrow \mathbb{C}$ satisfying $\tau(ab) = \tau(ba)$. Note that since every element of $K_0(AF(\mathcal{B}^+_x))$ can be represented by an element in $LF(\mathcal{B}^+_x)$, the space of traces $\tr(\mathcal{B}^+_x)$ can be seen as the dual to $K_0(AF(\mathcal{B}^+_x))$. The dual to $\tr(\mathcal{B}^+_x)$ as a vector space is the space of cotraces $\tr^*(\mathcal{B}^+_x)$ and it is this space which has great importance. We define the \textbf{trace bundle} to be the set of pairs $(x,\tau')$ with $\tau'\in \tr^*(\mathcal{B}^+_x)$. The shift $\sigma:X_\mathcal{F}\rightarrow X_\mathcal{F}$ induces a linear map $\sigma_*:\tr^*(\mathcal{B}^+_x)\rightarrow \tr^*(\mathcal{B}^+_{\sigma(x)})$, yielding a linear cocycle over the shift $\sigma$, which we call the \textbf{trace cocycle}. The Lyapunov spectrum of this cocycle, that is, growth rate of cotrace vectors under the trace cocycle, is what controls the statistical properties of the tilings.

Let $\mathcal{L}(\Omega_x)$ denote the set of Lipschitz functions on $\Omega_x$. For any Oseledets-regular $x$, that is, for any $x$ for which the conclusion of the Oseledets theorem holds (see \S \ref{subsec:TrCocycle}), there is a map $i^+_x:\mathcal{L}(\Omega_x)\rightarrow LF(\mathcal{B}^+_x)$ (see \S \ref{sec:erg}) and we denote by $[a_f] = i^+_x(f)$ the image of $f\in\mathcal{L}(\Omega_x)$ through this map. Before stating the first theorem, some notation is needed. 
For a set $B\subset\mathbb{R}^d$ we denote by $T\cdot B$ the scaling $(T\cdot \mathrm{Id})B$. A good Lipschitz domain is defined in \S \ref{subsec:domains}, but for now it suffices to say that it is a set whose boundary is not too complicated.
\begin{theorem}
\label{thm:main}
Let $\mathcal{F}$ be a finite family of uniformly expanding and compatible substitution rules on a finite set of prototiles $\{t_1,\dots, t_M\}$ with $X_\mathcal{F}\subset \Sigma_N$ parametrizing the possible tiling spaces and $\mu$ a minimal, $\sigma$-invariant ergodic Borel proability measure.

There exist Lyapunov exponents $\lambda_1^+>\lambda_2^+\geq \cdots\geq  \lambda_{d^+_\mu}^+ >0 $ (depending on $\mathcal{F}$ and $\mu$) such that for $\mu$-almost every $x$ there are traces $\tau_1^+,\dots, \tau_{d_\mu^+}^+\in \tr(\mathcal{B}^+_x)$ such that if $f\in\mathcal{L}(\Omega_x)$ satisfies $\tau_i^+([a_f])=0$ for all $i<j$ for some $j\leq d^+_\mu$ but $\tau^+_j([a_f])\neq 0$, $B$ a good Lipschitz domain and $\mathcal{T}\in\Omega_x$ then for every $\varepsilon>0$ there exists a set $B_\varepsilon$ which is $\varepsilon$-close to $B$ in the Hausdorff metric, a sequence $T_k\rightarrow \infty$ and a convergent sequence of vectors $\tau_k$ such that:
  \begin{equation}
    \label{eqn:uppBnd2}
    \limsup_{k\rightarrow \infty}\frac{\log \left| \displaystyle \int_{T_k(B_\varepsilon+\tau_k)}f\circ \varphi_t(\mathcal{T})\, dt\right|}{\log T_k}= d\frac{\lambda_j^+}{\lambda^+_1}.
    \end{equation}
  If in addition $d\lambda_j^+\geq(d-1)\lambda_1^+$ then:
  \begin{equation}
    \label{eqn:uppBnd1}
    \limsup_{T\rightarrow \infty}\frac{\log \left| \displaystyle \int_{T\cdot B}f\circ \varphi_t(\mathcal{T})\, dt\right|}{\log T}\leq d\frac{\lambda_j^+}{\lambda^+_1}.
    \end{equation}
\end{theorem}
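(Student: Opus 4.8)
The plan is to route both estimates through the \emph{discretization} of the ergodic integral which I expect to be the content of \S\ref{sec:erg}. For a single supertile $P$ of $\mathcal{T}$ at level $m$, the integral $\int_P f\circ\varphi_t(\mathcal{T})\,dt$ is, under the identification $i^+$, the corresponding coordinate of the $m$-th iterate of the trace cocycle applied to $[a_f]$; thus the hypotheses $\tau_i^+([a_f])=0$ for $i<j$ and $\tau_j^+([a_f])\neq0$ say, via Oseledets' theorem, that for every $\delta>0$ one has $|\int_P f\circ\varphi_t(\mathcal{T})\,dt|\le C_\delta\, e^{m(\lambda_j^++\delta)}$ for all level-$m$ supertiles $P$, with the rate $\lambda_j^+$ attained. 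Since the substitution matrices share the vector of prototile volumes as a common left Perron eigenvector, $\lambda_1^+$ is the exponential growth rate in $n$ of the volume of a level-$n$ supertile; in particular a supertile of linear size $T$ sits at level $n\sim d\log T/\lambda_1^+$ and has volume $\asymp e^{n\lambda_1^+}\asymp T^{d}$.

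For \eqref{eqn:uppBnd1}, decompose $U=T\cdot B$ into supertiles of $\mathcal{T}$ by a Whitney-type decomposition adapted to $\partial U$: level-$n$ supertiles ($n\sim d\log T/\lambda_1^+$) in the deep interior, progressively smaller ones toward the boundary, down to prototiles along $\partial U$. Because $B$ is a good Lipschitz domain the number of level-$m$ supertiles used is $\lesssim T^{d-1}e^{-m\lambda_1^+(d-1)/d}$ for $m<n$, plus $O(1)$ at level $n$; summing the estimate of the previous paragraph over all of them,
\begin{equation*}
\Bigl|\int_{T\cdot B}f\circ\varphi_t(\mathcal{T})\,dt\Bigr|\;\le\;C_\delta\sum_{m\le n}T^{d-1}e^{-m\lambda_1^+(d-1)/d}\,e^{m(\lambda_j^++\delta)}\;\lesssim_\delta\;\max\bigl\{T^{\,d-1},\,e^{n\lambda_j^+}\bigr\}\,T^{\delta'},
\end{equation*}
and since $e^{n\lambda_j^+}=T^{\,d\lambda_j^+/\lambda_1^++o(1)}$ and the extra hypothesis $d\lambda_j^+\ge(d-1)\lambda_1^+$ gives $d-1\le d\lambda_j^+/\lambda_1^+$, letting $\delta\to0$ yields $\limsup_{T\to\infty}\log|\int_{T\cdot B}f\circ\varphi_t(\mathcal{T})\,dt|/\log T\le d\lambda_j^+/\lambda_1^+$.

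For \eqref{eqn:uppBnd2} the idea is to remove the boundary contribution entirely by taking the averaging sets to be exact unions of supertile supports. Since $B$ is a good Lipschitz domain, for $r$ large the union of the supports of the tiles of $\mathcal{T}$ contained in $r\cdot B$, rescaled by $r^{-1}$, is within $\varepsilon$ of $B$ in the Hausdorff metric; this is a rescaled patch of $\mathcal{T}$ with $O(r^d)$ tiles and, up to translation, one of only finitely many shapes. The support of a level-$m$ supertile of $\mathcal{T}$ is a dilate of the corresponding prototile support, so the same finite list of shapes is realized by clusters of level-$m$ supertiles for every $m$; picking such a cluster $C_m$ whose rescaled support is $\varepsilon$-close to $B$ and invoking the pigeonhole principle over the finite shape list produces a fixed set $B_\varepsilon$ that is $\varepsilon$-close to $B$, a sequence $n_k\to\infty$, dilations $T_k=r\cdot(\text{cumulative expansion at level }n_k)\to\infty$, and bounded — hence, after a further subsequence, convergent — translations $\tau_k$ for which $T_k(B_\varepsilon+\tau_k)=\mathrm{supp}(C_{n_k})$. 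Now the collar error is zero and $\int_{T_k(B_\varepsilon+\tau_k)}f\circ\varphi_t(\mathcal{T})\,dt$ is an honest pairing of $[a_f]$ with the cotrace of $C_{n_k}$, bounded above by $C_\delta e^{n_k(\lambda_j^++\delta)}=T_k^{\,d\lambda_j^+/\lambda_1^++o(1)}$; and, provided $C_{n_k}$ is chosen so that its type-count vector is nondegenerate relative to the $j$-th Oseledets subspace at $\sigma^{n_k}x$, it is bounded below by $c_\delta e^{n_k(\lambda_j^+-\delta)}$ along a further subsequence, the hypothesis $\tau_j^+([a_f])\neq0$ ruling out cancellation by the contributions of the directions $i>j$. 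Letting $\delta\to0$ gives $\limsup_k\log|\int_{T_k(B_\varepsilon+\tau_k)}f\circ\varphi_t(\mathcal{T})\,dt|/\log T_k=d\lambda_j^+/\lambda_1^+$.

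The main obstacle is this last nondegeneracy. When $j>1$ the type-count vector of any cluster approximating $B$ is nearly a multiple of the prototile-frequency vector, which spans the top Oseledets direction, so realizing the slower rate $\lambda_j^+$ requires steering the count vector into the $j$-th Oseledets subspace by exploiting the freedom in which partial supertiles sit along the boundary of $C_{n_k}$ — precisely the ``boundary effects'' alluded to in the introduction — and doing this simultaneously for infinitely many levels $n_k$ while keeping the shape $\varepsilon$-close to $B$. Everything else is comparatively routine: the Whitney estimate of \eqref{eqn:uppBnd1} once the discretization of \S\ref{sec:erg} is available, the pigeonhole argument producing $B_\varepsilon$, and the relation $n\sim d\log T/\lambda_1^+$ converting the per-level growth $e^{n\lambda_j^+}$ dictated by Oseledets' theorem into the per-$\log T$ growth $T^{d\lambda_j^+/\lambda_1^+}$.
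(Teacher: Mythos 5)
Your treatment of \eqref{eqn:uppBnd1} is essentially the paper's argument: the decomposition of $T\cdot B$ into supertiles with boundary-controlled counts is exactly Lemma~\ref{lem:decomp}, the per-supertile estimate is the discretization $i^+_\Gamma$ of \S\ref{sec:erg} together with the Oseledets growth of $\sigma_*^{(i)}i^+_\Gamma(f)$, and the summation over levels producing $\max\{(d-1)\lambda_1^+/d,\lambda_j^+\}$ matches (\ref{eqn:ergInt2})--(\ref{eqn:ergInt5}). That half is fine.

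For \eqref{eqn:uppBnd2} there are two genuine gaps. First, your pigeonhole over ``finitely many shapes'' does not produce the sets $T_k(B_\varepsilon+\tau_k)$: in this globally random setting the level-$m$ superpatches of $\mathcal{T}$ are, after rescaling, patches of a tiling in $\Omega_{\sigma^m(x)}$ (Proposition~\ref{prop:renorm}), and which clusters of level-$m$ supertiles occur depends on the tail $\sigma^m(x)$, not just on the outer supports of the supertiles; knowing that a dilate of the support of $B_\varepsilon$ could be tiled by supertiles does not mean such a cluster actually occurs in $\mathcal{T}$, nor that it has the same type-count vector $\kappa$ as the original patch. The paper gets both facts from Poincar\'e recurrence of $x$ under $\sigma$: along return times $k_i$ one has $\mathcal{E}^+_{k_i+j}=\mathcal{E}^+_j$, so the patch $\mathcal{P}_\varepsilon(\bar e)$ literally reappears as a level-$k_i$ superpatch $\mathcal{P}^i_\varepsilon(\bar e)$ of $\mathcal{T}_{\bar e}$ with identical multiplicities $\kappa(\ell)$ (see (\ref{eqn:PiPatch})--(\ref{eqn:bigBlowup}) and Lemma~\ref{lem:vectors}). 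Second, and more seriously, the lower bound that you yourself flag as ``the main obstacle'' is left open, and the fix you propose rests on a misreading of where $\lambda_j^+$ comes from. The exponent is not obtained by steering the count vector of the averaging cluster into the $j$-th Oseledets subspace --- $\kappa$ is a fixed vector with nonnegative integer entries and is essentially Perron-directed, and nothing can be done about that. Rather, the hypothesis $\tau^+_i([a_f])=0$ for $i<j$, $\tau^+_j([a_f])\neq 0$ means that $i^+_\Gamma(f)$ has vanishing projections onto $V^+_1(x),\dots,V^+_{j-1}(x)$ and nonzero projection onto $V^+_j(x)$, so the evolved cotrace $\sigma_*^{(k_i)}i^+_\Gamma(f)$ itself grows at rate $\lambda_j^+$; the only thing left to rule out is cancellation in the finite sum $\sum_\ell \kappa(\ell)\bigl(j^+_{\sigma^{k_i}(x)}(\sigma_*^{(k_i)}i^+_\Gamma(f))\bigr)_\ell$ with positive weights, which the paper handles by the $I^+/I^0$ splitting leading to (\ref{eqn:liminf1})--(\ref{eqn:liminf2}). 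Without an argument of this kind (or some substitute for it), your proof establishes only the upper bound in \eqref{eqn:uppBnd2}, not the claimed equality.
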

\begin{remark}
  Some remarks:
  \begin{enumerate}
  \item The case of self-similar tilings, tilings which are constructed from a single substitution rule, correspond to tiling spaces $\Omega_x$ for periodic points $x$ under the shift $\sigma:\Sigma_N\rightarrow \Sigma_N$. In other words, tiling spaces for self-similar tilings correspond to the typical points of finitely-supported invariant measures on $\Sigma_N$ (assuming they are minimal measures). Studies of deviations of ergodic integrals for such types of systems have been done elsewhere \cite{sadun:exact, bufetov-solomyak:tilings, ST:SA}. Therefore, what is new here are the results for tiling spaces which do not come from self-similar tilings, that is, tilings which come from tiling spaces $\Omega_x$ for $x$ a typical point of a $\sigma$-invariant, ergodic measure satisfying the hypotheses of the Theorem which is not finitely supported. There is a continuum of examples in \S \ref{sec:experiments}.
    \item The spectral gap $\lambda_1^+>\lambda_2^+$ is a consequence of the recent general spectral gap result of Horan \cite[Corollary 2.19]{horan:lyapunov}.
  \end{enumerate}
\end{remark}

There is a particular type of tiling space, called a solenoid, which satisfies a type of bound known as Denjoy-Koksma inequality (the trace space is trivial for solenoids so Theorem \ref{thm:main} does not yield any information). The solenoid construction here is dependent on a family of substitution rules given by a sequence of positive integers $\bar{q} = (q_1,q_2,\dots)$ each one greater than 1. There is also a concept of function of bounded variation on the solenoid $\Omega_{\bar{q}}$ and the space of all functions of bounded variation on $\Omega_{\bar{q}}$ is denoted by $\mathrm{BV}(\Omega_{\bar{q}})$ (see \S \ref{sec:DK}). For $\bar{q}\in \mathbb{N}^\mathbb{N}$ denote by $q_{(n)} = q_1q_2\cdots q_n$, and let $\mu$ be the unique invariant mesaure on $\Omega_{\bar{q}}$.
\begin{theorem}
  \label{thm:DK}
  Let $\Omega_{\bar{q}}$ be a $d$-dimensional solenoid. Then for any $f\in \mathrm{BV}(\Omega_{\bar{q}})$ and $p\in \Omega_{\bar{q}}$,
  $$\left| \int_{[0,q_{(n)}]^d} f\circ \varphi^+_s(p)\, ds - q_{(n)}^d\int_{\Omega_{\bar{q}}} f\, d\mu \right|\leq \mathrm{Var}(f)$$
  for all $n>0$.
\end{theorem}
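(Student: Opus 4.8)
The plan is to exploit the exact, odometer-like self-similarity of the solenoid: unlike in the rotation setting, a cube of side $q_{(n)}$ aligns \emph{perfectly} with the level-$n$ hierarchy, so the Denjoy--Koksma bound comes out with no error term beyond $\mathrm{Var}(f)$. First I would recall the structure of $\Omega_{\bar q}$. For each $n$ there is a clopen ``corner'' transversal $\Xi_n\subset\Omega_{\bar q}$ consisting of tilings whose origin is a corner of a level-$n$ supertile, and a clopen partition $\mathcal{P}_n=\{C_v : v\in\{0,\dots,q_{(n)}-1\}^d\}$ of $\Omega_{\bar q}$ recording the position of the origin's unit tile inside its level-$n$ supertile. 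Each cell is a box, $C_v\cong[0,1)^d\times(\Xi_0\cap C_v)$, all cells have equal mass $\mu(C_v)=q_{(n)}^{-d}$, the $\mathcal{P}_{n+1}$ refine the $\mathcal{P}_n$, and $\varphi^+_{e_i}$ carries $C_v$ onto $C_{v+e_i\bmod q_{(n)}}$. Disintegrating $\mu$ over $\Xi_0$ gives a probability measure $\nu$ with $\int_{\Omega_{\bar q}}f\,d\mu=\int_{\Xi_0}\big(\int_{[0,1)^d}f\circ\varphi^+_u(\xi)\,du\big)\,d\nu(\xi)$ and $\nu(\Xi_0\cap C_v)=q_{(n)}^{-d}$. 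These are the cells on which the solenoidal variation $\mathrm{Var}(\cdot)$ of \S\ref{sec:DK} is built.

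Next I would reduce to $p\in\Xi_n$; the general case follows by applying the same argument to the cube based at the corner of the origin tile of $p$ and absorbing the resulting boundary shift, or by a density argument using continuity of $p\mapsto\int_{[0,q_{(n)}]^d}f\circ\varphi^+_s(p)\,ds$. For $p\in\Xi_n$, decompose $[0,q_{(n)}]^d=\bigcup_{j}(j+[0,1]^d)$ over $j\in\{0,\dots,q_{(n)}-1\}^d$. Then $p_j:=\varphi^+_j(p)$ again has its origin at a corner, with origin tile in position $j$, so $p_j\in\Xi_0\cap C_j$; thus the $q_{(n)}^d$ points $p_j$ meet every cell of $\mathcal{P}_n$ exactly once. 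Writing $\int_{[0,q_{(n)}]^d}f\circ\varphi^+_s(p)\,ds=\sum_j\int_{[0,1)^d}f\circ\varphi^+_u(p_j)\,du$ and, using $\nu(\Xi_0\cap C_j)=q_{(n)}^{-d}$, $q_{(n)}^d\int_{\Omega_{\bar q}}f\,d\mu=\sum_j\frac{1}{\nu(\Xi_0\cap C_j)}\int_{\Xi_0\cap C_j}\big(\int_{[0,1)^d}f\circ\varphi^+_u(\xi)\,du\big)\,d\nu(\xi)$, I subtract the two expressions term by term over $j$.

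Each term then becomes $q_{(n)}^d\int_{\Xi_0\cap C_j}\big(\int_{[0,1)^d}(f\circ\varphi^+_u(p_j)-f\circ\varphi^+_u(\xi))\,du\big)\,d\nu(\xi)$, which in absolute value is at most $\sup_{\xi,\xi'\in\Xi_0\cap C_j}\big|\int_{[0,1)^d}\big(f\circ\varphi^+_u(\xi)-f\circ\varphi^+_u(\xi')\big)\,du\big|$ since $p_j\in\Xi_0\cap C_j$. Summing over $j$, the result is the sum over the cells of $\mathcal{P}_n$ of the oscillation of the leafwise average $\xi\mapsto\int_{[0,1)^d}f\circ\varphi^+_u(\xi)\,du$, and by the definition of $\mathrm{Var}$ on $\Omega_{\bar q}$ (which is built precisely to dominate, for every $n$, this sum of cell-oscillations of leafwise averages) this is at most $\mathrm{Var}(f)$. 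Since $n$ was arbitrary, this is the claimed bound.

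The main obstacle I anticipate is reconciling this combinatorial estimate with the precise definition of $\mathrm{Var}$ from \S\ref{sec:DK}: in particular, when $d>1$ one must use the correct multidimensional notion of variation (of Vitali / Hardy--Krause type) and verify that it is super-additive over the hierarchical grid $\{\mathcal{P}_n\}$ and dominates the cell-oscillations of the leafwise average. A secondary technical point is the treatment of an arbitrary $p\in\Omega_{\bar q}$: when the origin is not a supertile corner the cube $[0,q_{(n)}]^d$ straddles several level-$n$ supertiles, so one must either realign the cube to the origin tile or check directly that the integer translates $\varphi^+_j(p)$ still visit each cell $C_v$ exactly once, which holds because $\varphi^+$ permutes $\mathcal{P}_n$ cyclically in each coordinate.
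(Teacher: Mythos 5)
Your core argument is the paper's argument: the odometer structure makes the $q_{(n)}^d$ integer translates of $p$ hit each of the $q_{(n)}^d$ level-$n$ cylinder sets exactly once, you pair each sample with the conditional average of $f$ over its cylinder, and the error is a sum of oscillations over the cells of a clopen partition, bounded by $\mathrm{Var}(f)$. The paper packages this slightly differently: it proves the discrete Denjoy--Koksma bound on the canonical transversal $\mho_{\bar q}$ and on every translated transversal $\mho^y_{\bar q}$, and then obtains the continuous statement by writing $\int_{[0,q_{(n)}]^d}f\circ\varphi_s(p)\,ds=\int_{[0,1]^d}\sum_j f\circ\varphi_{j+s}(p)\,ds$ and integrating the transversal inequality over the intra-tile offset $s$; your ``cells of $\Omega_{\bar q}$ and oscillation of the leafwise average'' picture is exactly this after one Fubini step.

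Two points in your write-up would not survive as stated. First, the reduction to $p\in\Xi_n$: realigning the cube to a supertile corner and ``absorbing the resulting boundary shift'' produces an error of order $q_{(n)}^{d-1}$, which is not controlled by $\mathrm{Var}(f)$, and the density argument also fails ($\Xi_n$ is a transversal, hence nowhere dense, and the inequality is not stable under small translations without boundary errors). The only viable route is the one you list as a fallback --- the exact cyclic permutation of the cells under integer translation, valid for arbitrary $p$ --- and even there one must note that for unaligned $p$ the unit cube $j+[0,1)^d$ straddles several tiles, so the term-by-term pairing of $\int_{j+[0,1)^d}$ with the single cell $C_{v_0+j}$ only becomes correct after Fubini in the fractional offset: for each fixed offset $s$ the points $\varphi_{j+s}(p)$ lie on one translated transversal and visit each cylinder exactly once, and one then integrates over $s\in[0,1]^d$. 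Second, the variation: no Vitali/Hardy--Krause notion enters. The paper defines $\mathrm{Var}(f)$ as the (uniform over all Cantor transversals) supremum, over clopen partitions, of sums of oscillations of $f$ itself on the transversal; it is not ``built to dominate cell-oscillations of leafwise averages.'' That domination does follow, but needs the two-line argument of pushing the supremum inside the leafwise integral, recognizing for each offset $s$ the clopen partition $\{\varphi_s(\Xi_0\cap C_j)\}_j$ of the translated transversal, and using $\mathrm{Var}(f|_{\mho^s})\leq\mathrm{Var}(f)$; the uniformity of the bound over all transversals is precisely what makes the final integration over $s$ cost nothing.
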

The Denjoy-Koksma inequality was first proved for irrational circle rotations by Herman \cite[Th\'eor\`eme VI.3.1]{herman:cercle}. Theorem \ref{thm:DK} here is the first instance of this type of inequality for higher rank systems.

Let $\mathcal{T}$ be a repetitive tiling with finitely many prototiles. Consider the Delone set $\Lambda_\mathcal{T}\subset\mathbb{R}^d$ obtained by puncturing every prototile in its interior and forming $\Lambda_\mathcal{T}$ as the union of all the corresponding punctures on tiles of $\mathcal{T}$ which correspond to punctures of the prototiles. There is a class of operators on $\ell^2(\Lambda_\mathcal{T})$, called the \textbf{Lipschitz operators of finite range}, denoted by $\mathcal{LA}_x^{fin}$. These operators are defined in \S \ref{sec:schrod}, but what is relevant here is that they contain operators of interest in mathematical physics, namely self-adjoint operators of the form $H = \triangle + V$, where $\triangle$ is a Laplacian-type operator and $V$ is any potential reflecting the aperiodic and repetitive nature of all tilings in $\Omega_x$ (see the footnote on page \pageref{lem:lipMap}). There types of operators sometime go under the name of \textbf{random Schr\"odinger operators}.

For $\mathcal{T}\in\Omega_x$ and $A\in \mathcal{LA}_x^{fin}$ a self-adjoint operator, we have the operator $A_\mathcal{T}$ acting on $\ell^2(\Lambda_{\mathcal{T}})$ and this assignment is equivariant with respect to the $\mathbb{R}^d$ action on $\Omega_x$. Denote by $A_\mathcal{T}|_{B}$ the restriction of $A_\mathcal{T}$ to the finite dimensional subspace $\ell^2(\Lambda_{\mathcal{T}}\cap B)\subset \ell^2(\Lambda_{\mathcal{T}})$ defined by $\Lambda_{\mathcal{T}}\cap B$. For $E\in\mathbb{R}$ and $T>0$ denote by
$$n_T^A(E)  := \# \{\mbox{eigenvalues of $A_\mathcal{T}|_{B_T}$ which are $\leq E$ } \}.$$
Assuming $\mathcal{T}$ is repetitive, has finite local complexity and uniform patch frequency, that is, $\mathcal{T}$ corresponds to a minimal and uniquely ergodic system, the function
$$E\mapsto \lim_{T\rightarrow \infty}\frac{n_T^A(E)}{\mathrm{Vol}(B_T)}$$
is the distribution of a measure $\rho_A$ (independent of $\mathcal{T}$ in the tiling space), called the \textbf{integrated density of states} \cite{LS:IDS}, satisfying
\begin{equation}
  \label{eqn:shubin}
  \rho_A(\varphi)= \tau(\varphi(A)):=\lim_{T\rightarrow \infty}\frac{\mathrm{tr}(\varphi(A|_{B_T}))}{\mathrm{Vol}(B_T)},
\end{equation}
where $\mathrm{tr}(\varphi(A|_{B_T}))$ is the unique (non-normalized) trace of the finite dimensional operator $\varphi(A|_{B_T})$, and for any continuous $\varphi$. This is the \textbf{Shubin-Bellissard trace formula}. It should be emphasized that the fact that the limit in (\ref{eqn:shubin}) is a trace is not trivial; see \cite[Lemma 3.4]{LS:algebras}. For a thorough introduction to the study of spectral properties of Schr\"odinger operators emerging from quasicrystals, see \cite[\S 3]{DEG:survey}.

The question addressed in \cite{ST:traces} was: what can be said about the convergence in (\ref{eqn:shubin})? In other words, is there a $\lambda\in(0,d)$ such that
$$\left|\mathrm{tr}(\varphi(A|_{B_T})) - \mathrm{Vol}(B_T)\tau(\varphi(A))\right| \leq CT^\lambda $$
for some $C>0$ and all $T>1$? The main result of \cite{ST:traces} showed that if the tiling or Delone set had a self-affine structure, then yes, error rates for the Shubin-Bellissard trace formula can be computed, and that they can can be computed with the help of other traces.

The second main result of this paper is a generalization of the main result of \cite{ST:traces} and answering this question in the case of random susbtitution tilings. Not only are the error rates for the convergence in (\ref{eqn:shubin}) computed, but the traces responsible for them are related to the traces defined on the LF algebras $LF(\mathcal{B}_x^+)$ and the error rates are defined by the Lyapunov spectrum of the trace cocycle from Theorem \ref{thm:main}. More precisely, in \S \ref{sec:schrod} for almost every $x\in X_\mathcal{F}$ we define a map $\Upsilon_x:\mathcal{LA}_x^{fin}\rightarrow LF(\mathcal{B}^+_x)$ and define functionals $\tau_i':= \Upsilon_x^*\tau_i^+$ by pulling back some of the traces in $\tr(\mathcal{B}^+_x)$. Whether or not $\tau_i'$ is a trace on $\tr(\mathcal{LA}_x^{fin})$ is dependent on the Lyapunov exponent $\lambda_i^+$ (see Proposition \ref{prop:pullTrace}). The following is a consequence of Theorem \ref{thm:main}.
\begin{theorem}
  \label{thm:opDevs}
  Let $\mathcal{F}$ be a finite family of uniformly expanding and compatible substitution rules on a finite set of prototiles $\{t_1,\dots, t_M\}$ with $X_\mathcal{F}\subset \Sigma_N$ parametrizing the possible tiling spaces and $\mu$ a minimal, ergodic, $\sigma$-invariant Borel ergodic proability measure.

  There exist Lyapunov exponents $\lambda_1^+>\lambda_2^+\geq \cdots\geq  \lambda_{d^+_\mu}^+>0 $ (depending on $\mathcal{F}$ and $\mu$) such that for $\mu$-almost every $x$ there are traces $\tau_1,\dots, \tau_{d_\mu^+}\in \tr(\mathcal{B}_x^+)$ such that if $A\in\mathcal{LA}_x^{fin}$ satisfies $\tau_i(\Upsilon_x(A))=0$ for all $i<j$ for some $j\leq d^+_\mu$ but $\tau_j(\Upsilon_x(A))\neq 0$, for $B$ a good Lipschitz domain, for every $\varepsilon>0$ there exists a set $B_\varepsilon$ which is $\varepsilon$-close to $B$ in the Hausdorff metric, a sequence $T_k\rightarrow \infty$ and a convergent sequence of vectors $\tau_k$ such that:
  $$\limsup_{k\rightarrow \infty}\frac{\log |\mathrm{tr}(A_\mathcal{T}|_{T_k\cdot( B_{\varepsilon}+\tau_k)})|}{\log T_k} = d\frac{\lambda^+_r}{\lambda_1^+}.$$
  If, in addition, $d\lambda_j^+> (d-1)\lambda_1^+$, then $\tau_j' = \Upsilon_x^*\tau_j$ is a trace and
$$\limsup_{T\rightarrow \infty}\frac{\log |\mathrm{tr}(A_\mathcal{T}|_{T\cdot B})|}{\log T} \leq d \frac{\lambda^+_j}{\lambda_1^+}.$$  
  \end{theorem}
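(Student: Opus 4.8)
The plan is to deduce Theorem~\ref{thm:opDevs} from Theorem~\ref{thm:main} by exhibiting $\mathrm{tr}(A_\mathcal{T}|_{B})$ as the ergodic integral over $B$ of a Lipschitz function on $\Omega_x$ that the map $\Upsilon_x$ is designed to track. The traces $\tau_1,\dots,\tau_{d^+_\mu}$ in the statement are taken to be the same ones produced in Theorem~\ref{thm:main}, namely those coming from the Oseledets decomposition of the trace cocycle, so everything below takes place on the full-measure set of $x$ for which Theorem~\ref{thm:main} holds and $\Upsilon_x$ is defined.

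The starting point is the elementary fact that the trace of a compression sees only diagonal matrix elements, so for every $B$,
$$\mathrm{tr}(A_\mathcal{T}|_{B}) = \sum_{p\in\Lambda_\mathcal{T}\cap B}\langle \delta_p, A_\mathcal{T}\delta_p\rangle .$$
Since $A\in\mathcal{LA}_x$ has finite range $R$ and its matrix elements depend on the local patch in a Lipschitz way, the diagonal entry at a puncture $p$ is $g_A(\varphi_{-p}\mathcal{T})$ for a bounded function $g_A$ on $\Omega_x$ whose value depends only on the prototile carrying the origin's puncture together with its $R$-neighborhood. Smearing each puncture against a bump $\psi_i$ of unit mass supported in a small ball $U_i$ inside the interior of the prototile $t_i$ turns this into a genuinely Lipschitz function $h_A\in\mathcal{L}(\Omega_x)$ for which, for every good Lipschitz domain $D$,
$$\int_{D} h_A\circ\varphi_t(\mathcal{T})\, dt = \sum_{p\in\Lambda_\mathcal{T}\cap D} g_A(\varphi_{-p}\mathcal{T}) + E(D,\mathcal{T}),$$
where $E(D,\mathcal{T})$ is supported on the tiles meeting $\partial D$; hence $|E(T\cdot B,\mathcal{T})|=O(T^{d-1})$ uniformly over $\mathcal{T}\in\Omega_x$ since $B$ is a good Lipschitz domain. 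First I would verify, from the construction of $\Upsilon_x$ in \S\ref{sec:schrod}, that $i^+(h_A)$ and $\Upsilon_x(A)$ have the same image under every trace of $\mathcal{B}^+_x$, so that $\tau_i(i^+(h_A))=\tau_i(\Upsilon_x(A))$ for all $i\le d^+_\mu$; the hypothesis on $\Upsilon_x(A)$ then becomes exactly the hypothesis of Theorem~\ref{thm:main} for $f=h_A$ with the same index $j$.

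Given that, the two assertions follow. Feeding $h_A$ into Theorem~\ref{thm:main} produces $B_\varepsilon$, a sequence $T_k\to\infty$ and vectors $\tau_k$ for which \eqref{eqn:uppBnd2} holds; because these are constructed so that each $T_k(B_\varepsilon+\tau_k)$ is a union of level-$n_k$ super-tiles, no tile straddles its boundary, the discrepancy $E$ drops to a term of strictly smaller exponential order, and therefore $\log|\mathrm{tr}(A_\mathcal{T}|_{T_k(B_\varepsilon+\tau_k)})| = \log|\int_{T_k(B_\varepsilon+\tau_k)} h_A\circ\varphi_t(\mathcal{T})\,dt| + o(\log T_k)$. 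Dividing by $\log T_k$ and invoking \eqref{eqn:uppBnd2} gives the claimed value $d\lambda_j^+/\lambda_1^+$ for the first $\limsup$. For the second assertion, assume $d\lambda_j^+>(d-1)\lambda_1^+$. Over $T\cdot B$ the boundary term satisfies $\log|E(T\cdot B,\mathcal{T})|\le (d-1)\log T+O(1)<d(\lambda_j^+/\lambda_1^+)\log T$ for all large $T$, so it cannot affect the $\limsup$; combining with \eqref{eqn:uppBnd1} applied to $h_A$ yields $\limsup_{T\to\infty}\log|\mathrm{tr}(A_\mathcal{T}|_{T\cdot B})|/\log T\le d\lambda_j^+/\lambda_1^+$. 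That $\tau_i'=\Upsilon_x^*\tau_i$ is moreover a trace on $\tr(\mathcal{LA}_x)$ under this same inequality on exponents is the content of Proposition~\ref{prop:pullTrace}.

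The step I expect to be the main obstacle is the identification $\tau_i(i^+(h_A))=\tau_i(\Upsilon_x(A))$: one must show that the way the diagonal matrix elements of $A$ distribute over the super-tiles of $\mathcal{B}^+_x$ is recorded by $\Upsilon_x$ compatibly with the map $i^+$ on Lipschitz functions — i.e. that $i^+(h_A)$ and $\Upsilon_x(A)$ agree modulo the common kernel of $\tau_1,\dots,\tau_{d^+_\mu}$ — and, in tandem, that the smeared boundary discrepancy really is $O(T^{d-1})$ uniformly over $\mathcal{T}\in\Omega_x$. Both points rely on repetitivity together with the finite-range, Lipschitz structure built into $\mathcal{LA}_x$; everything downstream of them is inherited essentially verbatim from Theorem~\ref{thm:main} and Proposition~\ref{prop:pullTrace}.
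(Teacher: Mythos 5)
Your proposal is correct and follows essentially the same route as the paper: there, $\Upsilon_{u,\Gamma}=j^+_x\circ i^+_\Gamma\circ\mathcal{M}_u$ with $\mathcal{M}_uA$ exactly your smeared function $h_A$, the identity $\mathrm{tr}(A_\mathcal{T}|_E)=\int_{E}\mathcal{M}_uA\circ\varphi_t(\mathcal{T})\,dt$ holds exactly when $E$ is a union of tiles and up to $O(|\partial E|)$ in general, and the conclusion is then read off from the estimates behind Theorem \ref{thm:main}, the special averaging sets of \S\ref{subsec:lowBnds}, and Proposition \ref{prop:pullTrace}. The step you flag as the main obstacle is definitional in the paper, since $\Upsilon_x(A)$ is by construction $j^+_x(i^+_\Gamma(h^u_A))$, with independence of the choice of bump function handled by the same boundary estimates.
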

\begin{remark}
  \label{rem:2}
  Some remarks:
  \begin{enumerate}
  \item These estimates give rates of convergence for the integrated density of states in (\ref{eqn:shubin}) for random Schr\"odinger operators as explained in the paragraphs following (\ref{eqn:shubin}). For example, if under the hypotheses of the theorem the top two Lyapunov exponents satisfy $\lambda^+_2>(d-1)\lambda_1^+/d$, then for any $\varepsilon>0$,
    $$\left|\mathrm{tr}(A|_{B_T}) - \mathrm{Vol}(B_T)\tau(A)\right| \leq C_\varepsilon T^{d\frac{\lambda^+_2}{\lambda_1^+}+\varepsilon} $$
    for some $C_\varepsilon>0$ and all $T>1$.
  \item Just like many of the traces on $LF(\mathcal{B}^+_x)$, a dense subalgebra of the C$^*$-algebra $AF(\mathcal{B}_x^+)$, do not extend to the full C$^*$-algebra, the auxiliary traces which describe the error rates in the convergence of the integrated density of states \emph{do not extend to traces on any C$^*$-algebra}. Thus what is important here is not the C$^*$-algebra of Schr\"odinger operators but a dense $*$-subalgebra consisting of ``smooth'' operators, which in this case is $\mathcal{LA}_x^{fin}$.
  \item This statement has no immediate relation to any statement about gap labelling (see \cite{kellendonk:gap} for background).
  \item  It is unclear to me what physical interpretations the traces $\tau_i'$ in Theorem \ref{thm:opDevs} have.
  \end{enumerate}
\end{remark}

This paper is organized as follows: in sections \ref{sec:tilings} and \ref{sec:brat} we review the essential definitions related to tilings and Bratteli diagrams, and how one can construct tiling spaces using Bratteli diagrams. These sections cover background material, borrowing some results from \cite{ST:random}. Section \ref{sec:interlude} is an interlude which illustrates the constructions using the example of half-hexagons in Figure \ref{fig:HalfHexes}. Section \ref{sec:LF} covers locally finite subalgebras of AF algebras and their traces. It is in this section that the trace cocycle is introduced and some basic properties are derived. Section \ref{sec:erg} is devoted to the study of ergodic integrals for Lipschitz functions on tiling spaces using the trace cocycle. Section \ref{sec:DK} proves the Denjoy-Koksma inequality for general solenoids. Finally, section \ref{sec:schrod} covers the application of the main theorem on deviations of ergodic averages to traces on random Schr\"odinger operators. The last section of the paper, \S \ref{sec:experiments}, shows some experimental results for the easiest non-trivial results I could come up with using half hexagons. It strongly suggests that in this case the Lyapunov spectrum is non-singular but does have multiplicities.

\begin{ack}
  I am deeply grateful to Lorenzo Sadun who pointed out a mistake in an earlier version of this paper, and to Dan Rust for helpful discussions, especially bringing \cite{GL:random} to my attention. I am also grateful to an anonymous referee for suggestions which made the exposition of the paper much better.   This work was supported by NSF grant DMS-1665100.
\end{ack}
\section{Tilings} 
\label{sec:tilings}
This section introduces the basic concepts in the theory of tilings. For a more thorough overview, see e.g. \cite{BaakeGrimm:book1} and \cite{sadun:book}.

A \textbf{tile} $t$ is a compact, connected subset of $\mathbb{R}^d$. Here it will always be assumed that the boundary $\partial t$ of a tile has finite $d-1$ dimensional measure. A \textbf{tiling} $\mathcal{T}$ of $\mathbb{R}^d$ is a cover of $\mathbb{R}^d$ by tiles, where two different tiles may only intersect along their boundaries. Here we will consider only cases where the tilings are formed by a finite set of prototiles $\{t_1,\dots, t_M\}$. That is, every tile $t\in \mathcal{T}$ is a translated copy of $t_i$ for some $i$. A \textbf{patch} of $\mathcal{T}$ is a finite connected union of tiles of $\mathcal{T}$. A tiling $\mathcal{T}$ is called repetitive if for any patch $\mathcal{P}$ there exists an $R_\mathcal{P}>0$ such that any ball of radius $R_\mathcal{P}$ contains a translated copy of $\mathcal{P}$ in it. For any set $A\subset \mathbb{R}^d$, denote by
$$\mathcal{O}^-_\mathcal{T}(A) = \mbox{ largest patch $\mathcal{P}$ of $\mathcal{T}$ completely contained in $A$}.$$
A tiling has \textbf{finite local complexity} if for each $R>0$ there exists a finite collection of patches $\mathcal{P}_1,\dots,\mathcal{P}_{N(R)}$ such that for any $x\in\mathbb{R}^d$ the patch $\mathcal{O}^-_{\mathcal{T}}(B_R(x))$ is a translated copy of one of the patches $\mathcal{P}_i$. 

      A \textbf{substitution rule} $\mathcal{F}$ on a finite set of prototiles $\{t_1,\dots, t_M\}$ is a rule which allows to express each prototile $t_{n_i}$ in a subset\footnote{Note that this differs from the traditional definition of a subtitution rule in that traditionally it is \emph{all} prototiles which are subdivided, whereas here one is allowed to only consider a subset of them and ignore the rest.} $\{t_{n_1},\dots ,t_{n_{\mathcal{F}}}\}\subset \{t_1,\dots, t_M\}$ as the finite union of scaled copies of some of the prototiles. More precisely, suppose we identify each prototile $t_i$ with a subset of $\mathbb{R}^d$, and we assume without loss of generality that this subset contains the origin in its interior. Then a substitution rule consists of a collection of scaling maps (also called graph iterated function systems) $\mathcal{F} = \{f_{i,j,k}:\mathbb{R}^d\rightarrow \mathbb{R}^d\}$ with $i,j = 1,\dots, M$, $k = 1,\dots, r(i,j)$ such that
      \begin{equation}
        \label{eqn:GIFS}
        t_{n_i} = \bigcup_{j=1}^M \bigcup_{k = 1}^{r(n_i,j)} f_{n_i,j,k}(t_j)
        \end{equation}
      and if for any $i$ any two maps $f_{i,j,k}$ and $f_{i,j',k'}$ have $f_{i,j,k}(t_j)\cap f_{i,j',k'}(t_j)\neq \varnothing$, then the intersection happens along the boundary of the images. In other words, each $t_{n_i}$ can be tiled by scaled copies of the prototiles $t_i$. The number $n(i,j)$ is the number of copies of a rescaled copy of the prototile $t_j$ is placed in $t_i$ when subdividing. As such, $f_{i,j,1}$ exists only if there is a rescaled copy of $t_j$ found when substituting the prototile $t_i$. The reader who has not seen a substitution rule defined as in (\ref{eqn:GIFS}) is invited to \S \ref{sec:interlude}, where the example in Figure \ref{fig:HalfHexes} is illustrated from the point of view of (\ref{eqn:GIFS}).

      A substitution is \textbf{uniformly expanding} if all maps  $f_{i,j,k}\in \mathcal{F}$ are of the form $f_{i,j,k}(x) = rx + \tau_{i,j,k}$ for some $r\in(0,1)$ and $\tau_{i,j,k}\in\mathbb{R}^d$. In this case, if $r$ is the contracting factor, $r^{-1}\cdot t_{n_i}$ the the union of prototiles, and the rescaling of (\ref{eqn:GIFS}) as
      \begin{equation}
        \label{eqn:GIFS2}
        r^{-1}\cdot t_{n_i} = \bigcup_{j=1}^M \bigcup_{k = 1}^{r(n_i,j)} r^{-1}\cdot f_{n_i,j,k}(t_j)
      \end{equation}
      is a \textbf{substitution and expansion rule} (Figure \ref{fig:HalfHexes} gives an example of two such rules, one with contraction 1/2 and the other with contraction 1/4). In defining a substitution rule which uniformly expanding, one is implicitly defining a substitution and expansion rule by (\ref{eqn:GIFS2}). 
      
      We will transform tilings by two types of operations: translations and deformations. Let $\mathcal{T}$ be a tiling of $\mathbb{R}^d$ and $\tau\in \mathbb{R}^d$. Then the tiling $\varphi_\tau(\mathcal{T}) := \mathcal{T} - \tau$ is the tiling of $\mathbb{R}^d$ obtained by translating each tile of $\mathcal{T}$ by the vector $\tau\in \mathbb{R}^d$. This is the \textbf{translation} of $\mathcal{T}$ by $\tau$.
      
      All the tilings which will be considered in this paper will have finite local complexity, so it will be assumed from now on. If tiling $\mathcal{T}$ has finite local complexity, then define a metric on the set of all translates of $\mathcal{T}$ by
      \begin{equation}
        \label{eqn:metric1}
        d\left(\mathcal{T},\varphi_t(\mathcal{T})\right) = \min \left\{1,\bar{d}(\mathcal{T},\varphi_t(\mathcal{T}))\right\},
        \end{equation}
      where
      \begin{equation}
        \label{eqn:metric2}
        \bar{d}\left(\mathcal{T},\varphi_t(\mathcal{T})\right) = \inf \left\{\varepsilon > 0:\mathcal{O}_\mathcal{T}^-\left(B_{\frac{1}{\varepsilon}}\right) \bowtie \mathcal{O}_{\varphi_{t+s}(\mathcal{T})}^-\left(B_{\frac{1}{\varepsilon}}\right) \mbox{ for some }\|s\|\leq \varepsilon\right\},
      \end{equation}
      where $\mathcal{P}_1\bowtie\mathcal{P}_2$ denotes the equivalence of patches $\mathcal{P}_1$ and $\mathcal{P}_2$ by a translation. In words: two tilings are close if they agree on a large ball around the origin up to a small translation. That this is a metric for tilings of finite local complexity is standard; see \cite[\S 5.4]{BaakeGrimm:book1}. The \textbf{tiling space} of $\mathcal{T}$ is defined as the metric completion of all translates of $\mathcal{T}$ with respect to the metric above:
      $$\Omega_\mathcal{T} = \overline{\{\varphi_t(\mathcal{T}):t\in\mathbb{R}^d\}}.$$
      
      There is a natural action of $\mathbb{R}^d$ on $\Omega_\mathcal{T}$ by translation, $\varphi_t:\mathcal{T}'\mapsto \varphi_t(\mathcal{T}')$. The action being minimal is equivalent to $\mathcal{T}$ being repetitive. As such, if $\mathcal{T}$ is repetitive then for any two $\mathcal{T}_1,\mathcal{T}_2\in\Omega_\mathcal{T}$ we have that $\Omega_{\mathcal{T}_1} = \Omega_{\mathcal{T}_2}$.

      Suppose $\mathcal{T}$ is a tiling of $\mathbb{R}^d$ by a finite collection of prototiles. That is, there is a finite set of tiles $\{t_1,\dots, t_M\}$ such that every tile $t\in \mathcal{T}$ is translation equivalent to $t_i$ for some $i$. For each $i$, pick a distinguished point in the interior of the prototile $t_i$, and then distinguish a point in the interior of each of the tiles in $\mathcal{T}$ by the translation equivalence between the tiles and prototiles. The \textbf{canonical transversal} $\mho_\mathcal{T}\subset \Omega_\mathcal{T}$ is the set
      $$\mho_\mathcal{T}:= \{\mathcal{T}'\in\Omega_\mathcal{T}:\mbox{ the distinguished point in the tile $t\in\mathcal{T}'$ containing the origin is the origin}\}.$$
If $\mathcal{T}$ is repetitive then $\mho_\mathcal{T}$ is a true transversal for the action of $\mathbb{R}^d$ on $\Omega_\mathcal{T}$ since it intersects every orbit.
            
Let $\mathcal{P}$ be a patch of $\mathcal{T}$ and $t\in\mathcal{P}$ a choice of one of the tiles in that patch. The the $(\mathcal{P},t)$-\textbf{cylinder set} is defined as
\begin{equation}
  \label{eqn:transvSets}
  \mathcal{C}_{\mathcal{P},t} = \{\mathcal{T}'\in\Omega_\mathcal{T}:\mathcal{P}\mbox{ is a patch in $\mathcal{T}'$ and the distinguished point in $t\in\mathcal{P}$ is the origin}\}
  \end{equation}
and note that this is a subset of $\mho_\mathcal{T}$. In fact, the topology of $\mho_\mathcal{T}$ is generated by cylinder sets of the form $\mathcal{C}_{\mathcal{P},t}$ and it has the structure of a Cantor set whenever $\mathcal{T}$ has finite local complexity. Note that for two tiles $t,t'\in\mathcal{P}$ (not necessarily of the same type) there exists a vector $\tau = \tau(\mathcal{P},t,t')$ such that $\varphi_\tau(\mathcal{C}_{\mathcal{P},t}) = \mathcal{C}_{\mathcal{P},t'}$.

For a patch $\mathcal{P}$ with a distinguished point in its interior, a tile $t\in\mathcal{P}$ and $\varepsilon >0$, the $(\mathcal{P},t,\varepsilon)$-cylinder set is the set
\begin{equation}
  \label{eqn:openSet}
\mathcal{C}_{\mathcal{P},t,\varepsilon} = \bigcup_{\|t\|<\varepsilon}\{\varphi_t(\mathcal{T}'):\mathcal{T}'\in\mathcal{C}_{\mathcal{P},t}\}\subset \Omega_\mathcal{T}.
  \end{equation}
For a repetitive  $\mathcal{T}$ of finite local complexity the topology of $\Omega_\mathcal{T}$ is then generated by cylinder sets of the form $\mathcal{C}_{\mathcal{P},t,\varepsilon}$ with $\mathcal{P}$ being any patch in $\mathcal{P}$ and $\varepsilon>0$ being arbitrarily small. This gives $\Omega_\mathcal{T}$ a local product structure of $B_\varepsilon(0)\times \mathcal{C}$, where $B_\varepsilon(0)\subset \mathbb{R}^d$ is the open ball of radius $\varepsilon$ and $\mathcal{C}$ is a Cantor set.

Let $\mathcal{T}$ be a repetitive tiling of finite local complexity. Given a patch $\mathcal{P}\subset \mathcal{T}$ and set $B\subset \mathbb{R}^d$ let $L_\mathcal{T}(P,B)$ be the number of copies of $\mathcal{P}$ completely contained inside of $B$. Then
      $$\mathrm{freq}_\mathcal{T}(P) = \lim_{T\rightarrow \infty}\frac{L_\mathcal{T}(P,B_T)}{\mathrm{Vol}(B_T)},$$
      when it exists, is the asymptotic patch frequency of $\mathcal{P}$ in $\mathcal{T}$. For the purposes of this paper, without loss of generality, it can be assumed that this limit always exists since it will be well-defined for all tilings considered here. By (\ref{eqn:transvSets}) this gives a family of Borel measures on $\mho_\mathcal{T}$ parametrized by $\Omega_\mathcal{T}$ which are invariant under the holonomies $\tau(\mathcal{P},t,t')$. In other words, we have a function $\nu:\Omega_\mathcal{T}\times \mathcal{B}(\mho_\mathcal{T})\rightarrow \mathbb{R}$, where $\mathcal{B}(\mho_\mathcal{T})$ is the Borel $\sigma$-algebra of $\mho_\mathcal{T}$, with $\nu(\mathcal{T}',\mathcal{P}) = \mathrm{freq}_{\mathcal{T}'}(\mathcal{P})$ for any patch $\mathcal{P}$. The action of $\mathbb{R}^d$ on $\Omega_\mathcal{T}$ is \textbf{uniquely ergodic} if $\nu$ does not depend on the first coordinate, that is, $\mathrm{freq}_\mathcal{T}(\mathcal{P})$ is independent of $\mathcal{T}$. This will be the typical case in this paper; see \cite[\S 3]{solomyak:SS} for further details about frequencies.

      Given that the measures $\nu_\mathcal{T}:=\nu(\mathcal{T},\cdot)$ are holonomy-invariant, by the local product structure of $\Omega_\mathcal{T}$, they define $\mathbb{R}^d$-invariant measures on $\Omega_\mathcal{T}$ which are locally of the form $\mu_\mathcal{T} = \mathrm{Leb}\times \nu$, where $\nu$ is defined by the restriction the frequency measure $\nu_\mathcal{T}$ on the Cantor set defined by the patch $\mathcal{P}$. Whenever $\varphi_s:\Omega_\mathcal{T}\rightarrow \Omega_\mathcal{T}$ is uniquely ergodic we will denote by $\mu$ the unique invariant measure.
      \subsection{Lipschitz domains}
      \label{subsec:domains}
      This subsection introduces Lipschitz domains, which are types of subsets of $\mathbb{R}^d$ whose boundaries are well-behaved, making them useful sets over which to integrate functions. Let $\mathcal{H}^m$ denote the $m$-dimensional Hausdorff measure.
      \begin{definition}
        A set $E\subset \mathbb{R}^d$ is called \emph{$m$-rectifiable} if there exist Lipschitz maps $f_i: \mathbb{R}^m\rightarrow\mathbb{R}^d$, $i = 1,2,\dots$ such that
        $$\mathcal{H}^m\left(  E\backslash \bigcup_{i\geq 0} f_i(\mathbb{R}^m)   \right) = 0.$$
      \end{definition}
\begin{definition}
  A \emph{Lipschitz domain} $A\subset\mathbb{R}^d$ is an open, bounded subset of $\mathbb{R}^d$ for which there exist finitely many Lipschitz maps $f_i:\mathbb{R}^{d-1}\rightarrow \mathbb{R}^d$, $i = 1,\dots, L$ such that
  $$\mathcal{H}^{d-1}\left(  \partial A \backslash \bigcup_{i=1}^L f_i(\mathbb{R}^{d-1})   \right) = 0.$$
\end{definition}
Lipschitz domains have $d-1$-rectifiable boundaries.
\begin{definition}
A subset $A\subset \mathbb{R}^d$ is a \textbf{good} Lipschitz domain if it is a Lipschitz domain and $\mathcal{H}^{d-1}(\partial A)<\infty$.
\end{definition}
\section{Bratteli diagrams and tilings}
\label{sec:brat}
      A \textbf{Bratteli diagram} $\mathcal{B} = (\mathcal{V},\mathcal{E})$ is a bi-infinite directed graph partitioned such that
      $$\mathcal{V} = \bigsqcup_{k\in\mathbb{Z}} \mathcal{V}_k\hspace{1in}\mbox{ and }\hspace{1in}\mathcal{E} = \bigsqcup_{k\in \mathbb{Z}\backslash \{0\}} \mathcal{E}_k$$
      with maps $r,s:\mathcal{E}\rightarrow \mathcal{V}$ satisfying $r(\mathcal{E}_{k}) = \mathcal{V}_{k}$ $s(\mathcal{E}_{k}) = \mathcal{V}_{k-1}$ if $k>0$ and $r(\mathcal{E}_{k}) = \mathcal{V}_{k+1}$ $s(\mathcal{E}_{k}) = \mathcal{V}_{k}$ if $k<0$, and with $r^{-1}(v)\neq \varnothing$ and $s^{-1}(v)\neq \varnothing$ for all $v\in\mathcal{V}$. We assume that $|\mathcal{V}_k|$ and $|\mathcal{E}_k|$ are finite for every $k$.
      \begin{remark}
        The above definition is not the usual definition of a Bratteli diagrams, as usually their edge and vertex sets are indexed by $\mathbb{N}$. One of the reasons to index the edge set $\mathcal{E}$ through $\bar{\mathbb{Z}}$ instead of $\mathbb{Z}$ is that it makes labelling choices when drawing them less awkward. The ones considered here are technically bi-infinite diagrams and the notational conventions of \cite{LT} for bi-infinite Bratteli diagrams will be followed. There are two other advantages of using bi-infinite diagrams rather than the traditional diagrams indexed by $\mathbb{N}$; see the first paragraph of \S \ref{subsec:renorm} for details.
      \end{remark}
      The \textbf{positive part} $\mathcal{B}^+$ of $\mathcal{B}$ is the Bratteli diagram $\mathcal{B}^+ = (\mathcal{V}^+,\mathcal{E}^+)$ defined by the restriction to the non-negative indices of the data of $\mathcal{B}$. The \textbf{negative part} $\mathcal{B}^-$ is similarly defined.

      A \textbf{path} in $\mathcal{B}$ is a finite collection of edges $\bar{e} = (e_\ell,\dots , e_m)$ such that $e_i\in \mathcal{E}_i $ and $r(e_i) = s(e_{i+1})$ for all $i\in \{\ell,\dots, m-1\}$. As such the domain of the range and source maps can be extended to all finite paths by setting $s(\bar{e}) = s(e_\ell)$ and $r(\bar{e}) = r(e_m)$. Let $\mathcal{E}_{\ell,m}$ be the set of all paths starting $\mathcal{V}_\ell$ to $\mathcal{V}_m$, that is, finite paths $\bar{e}$ with both $s(\bar{e})\in \mathcal{V}_\ell$ and $r(\bar{e})\in\mathcal{V}_m$. We can extend this to infinite paths: let $X^+_\mathcal{B} = \mathcal{E}_{0,\infty}$ be the set of infinite paths starting at $\mathcal{V}_0$ and $X^-_\mathcal{B} = \mathcal{E}_{-\infty,0}$ be the set of infinite paths ending at $\mathcal{V}_0$. The set $X^+_\mathcal{B}$ can be topologized by cylinder sets of the form
      \begin{equation}
        \label{eqn:cylinderBrat}
        C(\bar{e}) = \{\bar{p}\in X^+_\mathcal{B}:(p_\ell,\dots, p_m) = (e_\ell,\dots, e_m)\},
      \end{equation}
      for some finite path $\bar{e}\in \mathcal{E}_{\ell,m}$ with $0\leq \ell < m$. The set $X^-_\mathcal{B}$ is similarly topologized and as such the spaces $X^\pm_\mathcal{B}$, when the number of vertices at every level is uniformly bounded\footnote{As they will be in the diagrams appearing in this paper.}, are compact metric spaces which are Cantor sets. The space of all bi-infinite paths on $\mathcal{B}$ is then
      $$X_\mathcal{B}\subset X^-_\mathcal{B}\times X^+_\mathcal{B}$$
      and it inherits the subspace topology.

      Two paths $p,p'\in X^+_\mathcal{B}$ are \textbf{tail-equivalent} if there is an $N>0$ such that $p_i = p'_i$ for all $i>N$ and this is an equivalence relation, where we deonte classes by $[\bar{e}]$. A minimal component of $X^+_\mathcal{B}$ is a subset of the form $\overline{[\bar{e}]}$. A Bratteli diagram $\mathcal{B}$ is \textbf{minimal} if $\overline{[\bar{e}]} = X^+_\mathcal{B}$ for all $\bar{e}\in X^+_\mathcal{B}$ or, in other words, when there is only one minimal component. A measure $\mu$ on $X^+_\mathcal{B}$ is \textbf{invariant under the tail equivalence relation} if for any $N$ and paths $p_1,p_2\in \mathcal{E}_{0,N}$ with $r(p_1) = r(p_2)$ we have that $\mu(C(p_1)) = \mu(C(p_2))$.
      
      \subsection{Tilings from diagrams}
      \label{sec:blow}
      Here we recall the tiling construction from \cite{ST:random}.  Let $\{t_1,\dots, t_M\}$ be a set of prototiles and suppose that they admit $N$ substitution rules $\mathcal{F}_1,\dots, \mathcal{F}_N$.   Given a collection $\mathcal{F} = \{\mathcal{F}_1,\dots, \mathcal{F}_N\}$ of substitution rules, we want to parametrize all possible tilings we can obtain by different combinations of substitutions. As such, the space that organizes all of these combinations is a $\sigma$-invariant, closed subset  $X_\mathcal{F} \subset \Sigma_N = \{1,\dots, N\}^{\bar{\mathbb{Z}}}$ of the $N$-shift, where $\bar{\mathbb{Z}} := \mathbb{Z}-\{0\}$, inheriting the order from $\mathbb{Z}$. In this section, a procedure is described for constructing from any $x\in X_\mathcal{F}$ a Bratteli diagram $\mathcal{B}^+_x$ and a construction assigning paths $\bar{e}\in X^+_\mathcal{B}$ a tiling $\mathcal{T}_{\bar{e}}$. If all the substitution rules $\mathcal{F}_1,\dots, \mathcal{F}_N$ involve all prototiles (see footnote on page \pageref{eqn:GIFS}), then $X_\mathcal{F} = \Sigma_N$. However, if one or more of the substitution rules do not involve all prototiles, then there may be restrictions as to how one can compose them, leading to a strict subset $X_\mathcal{F}\subset \Sigma_N$ which would be $\sigma$-invariant (a subshift). The reader should always keep in mind the case where all uniformly expanding substitution rules $\mathcal{F}_1,\dots, \mathcal{F}_N$ involve all prototiles (and so $X_\mathcal{F} = \Sigma_N$); the other cases are not usually common in the literature, but they can be handled with the machinery of this paper.

      Pick $x =(x^-,x^+) = (\dots, x_{-2},x_{-1},x_1,x_2,\dots)\in X_\mathcal{F} $. We will start by defining the positive part $\mathcal{B}^+_x$ of the Bratteli diagram $\mathcal{B}_x$. For $k\geq 0$, $\mathcal{B}_x^+$ will have $|\mathcal{V}_k|$ be the number of tiles used in the substitution $\mathcal{F}_{x_{k+1}}$, that is, not the number of tiles which are tiled by the rule $\mathcal{F}_{x_k}$, but the number of different tiles used in that substitution rule\footnote{If all substitution rules $\mathcal{F}_1,\dots, \mathcal{F}_N$ involve all prototiles, then $|\mathcal{V}_k| = N$ for all $k$. Again, see footnote on page \pageref{eqn:GIFS}.}. The vertices are ordered at each level so that $v_i\in \mathcal{V}_k^+$ is identified with $t_{n_i}$ for every $k$. Now, starting with $k=1$, consider the substitution rule $\mathcal{F}_{x_k}$. Then for $v_j\in \mathcal{V}_{k-1}^+$ and $v_i\in \mathcal{V}_k^+$ there are $r(i,j)$ edges from $v_j$ to $v_i$, and we identify the corresponding map $f_{i,j,k}$ with the appropriate edge $e\in \mathcal{E}_k^+$ and denote it by $f_e$. Since the maps $f_e$ are contacting they are of the form $f_e(x) = \theta_ex + \tau_e$ for some $\theta_e\leq 1$. This notation extends to finite paths $\bar{e}\in \mathcal{E}_{0,k}$ by $f_{\bar{e}} = f_{e_k}\circ \cdots \circ f_{e_1}$.

      Let $\bar{e}\in X^+_\mathcal{B}$ and denote by $\bar{e}|_k$ the truncation of $\bar{e}$ after its $k^{th}$ edge, that is, $\bar{e}|_k\in \mathcal{E}^+_{0,k}$. The \textbf{$k^{th}$ approximant $\mathcal{P}_k(\bar{e})$} is the set
      \begin{equation}
        \label{eqn:approx}
        \mathcal{P}_k(\bar{e}) = \bigcup_{\substack{\bar{e}'\in \mathcal{E}^+_{0,k}\\ r(\bar{e}') = r(\bar{e}|_k)}}f_{\bar{e}|_k}^{-1}\circ f_{\bar{e}'}(t_{s(\bar{e}')})
        \end{equation}
      viewed as a tiled patch, where the tiles are the sets $f_{\bar{e}|_k}^{-1}\circ f_{\bar{e}'}(t_{s(\bar{e}')})$ for a path $\bar{e}'\in\mathcal{E}^+_{0,k}$ with $r(\bar{e}') = r(\bar{e}|_k)$. The hypotheses on the maps $f_e$ guarantee that the approximants are nested, i.e., we have the inclusion of patches
      $$\{0\}\subset t_{s(\bar{e})}\subset \mathcal{P}_1(\bar{e})\subset \cdots \subset \mathcal{P}_k(\bar{e})\subset \mathcal{P}_{k+1}(\bar{e})\subset \cdots.$$
      Patches of the form $\mathcal{P}_k(\bar{e})$ are called \textbf{level $k$-supertiles}.
      \begin{definition}
        \label{def:tiling}
        For $\bar{e}\in X^+_{\mathcal{B}_x}$, the tiling $\mathcal{T}_{\bar{e}}$ is the largest tiled subset of $\mathbb{R}^d$ such that $\mathcal{P}_k(\bar{e})$ is a patch of $\mathcal{T}_{\bar{e}}$ for all $k$ and each tile of $\mathcal{T}_{\bar{e}}$ is contained in all but finitely many of the approximants $\mathcal{P}_k(\bar{e})$. In other words,
        $$\mathcal{T}_{\bar{e}} = \bigcup_{k>0} \mathcal{P}_k(\bar{e}).$$
      \end{definition}
      Some care needs to be given in order to produce tilings which 1) cover all of $\mathbb{R}^d$ and 2) have finite local complexity, as nothing guarantees that the tiling in $\mathcal{T}_{\bar{e}}$ to have either property. The first property needed is the following.
      \begin{definition}
        A collection $\mathcal{F} = \{\mathcal{F}_1,\dots, \mathcal{F}_N\}$ of substitution rules is \textbf{uniformly expanding} if there exist numbers $\theta_1,\dots, \theta_N\in(0,1)$ such that each substitution rule $f_{i,j,k}\in\mathcal{F}_\ell$ is of the form $f_{i,j,k}(x) = \theta_\ell x + \tau_{i,j,k}$ for some $\tau_{i,j,k}\in\mathbb{R}^d$ .
      \end{definition}
      \begin{definition}
        A collection $\mathcal{F} = \{\mathcal{F}_1,\dots, \mathcal{F}_N\}$ of substitution rules is \textbf{compatible} if, for any $x\in\Sigma_N$ and $\bar{e}^+\in X^+_{\mathcal{B}_x}$ such that $\mathcal{T}_{\bar{e}}$ defined in Definition \ref{def:tiling} covers all of $\mathbb{R}^d$, then $\mathcal{T}_{\bar{e}}$ has finite local complexity.
      \end{definition}
Compatibility is automatic in for $d=1$. The results of \cite{GKM:computer} show that this is not asking for too much in higher dimensions.      The following are standard, see \cite[Lemma 5]{ST:random}.

      \begin{lemma}
  \label{lemma:deform}
        Let $\mathcal{F} = \{\mathcal{F}_1,\dots,\mathcal{F}_N\}$ be a collection of compatible and uniformly expanding substitution rules defined on the same set of prototiles. For $x\in X_\mathcal{F}$ consider the Bratteli diagram $\mathcal{B}_x$ where the edge set $\mathcal{E}_k$ is defined by $\mathcal{F}_{x_k}$. Then:
  \begin{enumerate}
  \item If $\bar{e}\sim \bar{e}^*\in X^+_{\mathcal{B}_x}$ then there exists $\tau\in \mathbb{R}^d$ such that $\mathcal{T}_{\bar{e}^*} = \mathcal{T}_{\bar{e}} + \tau$.
  \item $\Omega_{\mathcal{T}_{\bar{e}}}$ only depends on the minimal component: $\Omega_{\mathcal{T}_{\bar{e}}} = \Omega_{\mathcal{T}_{\bar{e}'}}$ for all $\bar{e}'\in \overline{[\bar{e}]}$.
\end{enumerate}
\end{lemma}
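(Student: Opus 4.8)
The plan is to prove both parts of Lemma \ref{lemma:deform} directly from the construction of $\mathcal{T}_{\bar e}$ as a nested union of supertiles $\mathcal{P}_k(\bar e)$, exploiting the fact that tail-equivalence only changes finitely much combinatorial data and that the map $\bar e\mapsto \mathcal{T}_{\bar e}$ is, by construction, equivariant under such finite changes. For part (i), suppose $\bar e\sim\bar e^*$ and let $N$ be such that $e_i=e_i^*$ for all $i>N$. First I would observe that for every $k>N$ the range $r(\bar e|_k)=r(\bar e^*|_k)$, so the index set defining the approximant in \eqref{eqn:approx} is the same for both, and the two supertiles $\mathcal{P}_k(\bar e)$ and $\mathcal{P}_k(\bar e^*)$ are built from the same collection of prototiles $t_{s(\bar e')}$ placed via the maps $f_{\bar e|_k}^{-1}\circ f_{\bar e'}$ versus $f_{\bar e^*|_k}^{-1}\circ f_{\bar e'}$. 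Since $f_{\bar e|_k}$ and $f_{\bar e^*|_k}$ differ only in the first $N$ edges and the tail edges agree, the composition $f_{\bar e^*|_k}\circ f_{\bar e|_k}^{-1}$ is, by the uniformly expanding hypothesis, a rigid motion of the form $x\mapsto x+\tau_k$ for a vector $\tau_k$ that depends only on the (fixed) initial segments $\bar e|_N,\bar e^*|_N$ — in particular $\tau_k\equiv\tau$ is independent of $k$. Hence $\mathcal{P}_k(\bar e^*) = \mathcal{P}_k(\bar e)+\tau$ for all $k>N$, and passing to the union over $k$ (Definition \ref{def:tiling}) gives $\mathcal{T}_{\bar e^*}=\mathcal{T}_{\bar e}+\tau$, which is the assertion with the translation sign matching the convention $\varphi_\tau(\mathcal{T})=\mathcal{T}-\tau$.

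For part (ii), I would combine part (i) with the definition of the tiling space as a completion. If $\bar e'\in\overline{[\bar e]}$, then there is a sequence $\bar e^{(n)}\sim\bar e$ with $\bar e^{(n)}\to\bar e'$ in $X^+_{\mathcal{B}_x}$; by part (i) each $\mathcal{T}_{\bar e^{(n)}}$ is a translate of $\mathcal{T}_{\bar e}$, so $\mathcal{T}_{\bar e^{(n)}}\in\Omega_{\mathcal{T}_{\bar e}}$. Next I would check that $\bar e^{(n)}\to\bar e'$ forces $\mathcal{T}_{\bar e^{(n)}}\to\mathcal{T}_{\bar e'}$ in the tiling metric \eqref{eqn:metric1}–\eqref{eqn:metric2}: convergence in $X^+_{\mathcal{B}_x}$ means the initial segments of $\bar e^{(n)}$ agree with those of $\bar e'$ to higher and higher level, hence the supertiles $\mathcal{P}_k(\bar e^{(n)})$ and $\mathcal{P}_k(\bar e')$ coincide as tiled patches for $n$ large relative to $k$, and since the diameters of the level-$k$ supertiles grow without bound (uniform expansion), two such tilings agree on an arbitrarily large ball around the origin. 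Therefore $\mathcal{T}_{\bar e'}$ is a limit of elements of $\Omega_{\mathcal{T}_{\bar e}}$, so $\mathcal{T}_{\bar e'}\in\Omega_{\mathcal{T}_{\bar e}}$; since $\mathcal{T}_{\bar e}$ is repetitive (as recalled in \S\ref{sec:tilings}, repetitivity gives $\Omega_{\mathcal{T}_1}=\Omega_{\mathcal{T}_2}$ for $\mathcal{T}_1,\mathcal{T}_2$ in the same hull), this yields $\Omega_{\mathcal{T}_{\bar e'}}=\Omega_{\mathcal{T}_{\bar e}}$, and the reverse containment follows by symmetry since $\bar e\in\overline{[\bar e']}$ as well.

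The main obstacle I anticipate is the bookkeeping in part (i): verifying that $f_{\bar e^*|_k}\circ f_{\bar e|_k}^{-1}$ is genuinely a pure translation and that the translation vector stabilizes with $k$. This is exactly where the uniformly expanding hypothesis is essential — if the $f_e$ had nontrivial linear parts (rotations, reflections) the composition would only be a rigid motion, not a translation, and the supertiles would match only up to that motion; one then also has to confirm that the \emph{same} paths $\bar e'$ index the two approximants once $k>N$, which is where $r(\bar e|_k)=r(\bar e^*|_k)$ is used. A secondary, more routine point is making the metric estimate in part (ii) quantitative: one needs that the inradius of $\mathcal{P}_k(\bar e')$ about the origin tends to infinity, which follows because each tile has nonempty interior containing $0$ and each substitution step scales by a factor $\theta_{x_j}^{-1}>1$, so $k$ steps inflate by at least $(\min_\ell\theta_\ell)^{-k}\to\infty$. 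Both of these are straightforward given the hypotheses, so the lemma reduces to the careful but elementary tracking of the maps $f_{\bar e}$, exactly as in \cite[Lemma 5]{ST:random}.
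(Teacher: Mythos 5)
The paper does not actually prove this lemma; it cites \cite[Lemma 5]{ST:random}, so there is no internal argument to compare against, and your proposal has to stand on its own. Your part (i) does: the index sets in \eqref{eqn:approx} agree for $k>N$, and the discrepancy between the two approximants is the affine map $f_{\bar e^*|_k}^{-1}\circ f_{\bar e|_k}$, whose linear parts cancel because every edge at level $i$ carries the same contraction $\theta_{x_i}$, and which equals $f_{\bar e^*|_N}^{-1}\circ f_{\bar e|_N}$ once $k>N$, hence is a single translation independent of $k$. Note the order: the map you wrote, $f_{\bar e^*|_k}\circ f_{\bar e|_k}^{-1}$, is a translation whose vector is scaled by $\theta_{x_{N+1}}\cdots\theta_{x_k}$ and so does depend on $k$; the $k$-independent one, and the one that actually carries $\mathcal{P}_k(\bar e)$ to $\mathcal{P}_k(\bar e^*)$, is $f_{\bar e^*|_k}^{-1}\circ f_{\bar e|_k}$. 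That is a trivial fix.

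Part (ii) has two genuine gaps. First, your deduction that $\bar e^{(n)}\to\bar e'$ forces $\mathcal{T}_{\bar e^{(n)}}\to\mathcal{T}_{\bar e'}$ in the metric \eqref{eqn:metric1}--\eqref{eqn:metric2} rests on the claim that the inradius of $\mathcal{P}_k(\bar e')$ about the origin tends to infinity because each substitution step inflates by $\theta^{-1}>1$. That conflates the growth of the supertile with the distance from the origin to its boundary: $\mathcal{P}_k(\bar e')$ is $f_{\bar e'|_k}^{-1}(t_{r(\bar e'|_k)})$, and the relative position of the origin inside it is dictated by the path, so the origin can stay at bounded distance from $\partial\mathcal{P}_k(\bar e')$ for all $k$. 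This is exactly why the paper introduces $\mathring{X}^+_{\mathcal{B}}$ and proves separately that it has full measure; for $\bar e'\notin\mathring{X}^+_{\mathcal{B}}$ the shared patch need not contain $B_{1/\varepsilon}(0)$, the tilings can disagree inside that ball, and $\mathcal{T}_{\bar e'}$ need not even tile all of $\mathbb{R}^d$. Second, the closing step is unjustified in two ways: you invoke repetitivity of $\mathcal{T}_{\bar e}$ to upgrade $\mathcal{T}_{\bar e'}\in\Omega_{\mathcal{T}_{\bar e}}$ to equality of hulls, but repetitivity is not among the hypotheses (it is essentially the minimality being discussed), and the ``by symmetry'' claim $\bar e\in\overline{[\bar e']}$ is false in general: closures of tail-equivalence classes are not symmetric. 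A concrete obstruction: one substitution on two unit tiles $a,b$ with $a\mapsto aaa$ and $b\mapsto aba$; the path always choosing the central $b$-edge yields a one-defect tiling $\mathcal{T}_{\bar e}$, the all-$a$ path $\bar e'$ lies in $\overline{[\bar e]}$, yet $\Omega_{\mathcal{T}_{\bar e'}}$ (a circle of periodic tilings) is strictly smaller than $\Omega_{\mathcal{T}_{\bar e}}$, and $\bar e\notin\overline{[\bar e']}$. So the statement requires a minimality-type hypothesis (as in the source \cite[Lemma 5]{ST:random}, and as the paper uses it immediately afterwards), and any correct proof of (ii) must establish both patch-containments under that hypothesis rather than appeal to symmetry; your argument as written does not do this.
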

Let $\mathring{X}^+_\mathcal{B}\subset X^+_{\mathcal{B}}$ be the set of paths $\bar{e}$ such that $\mathcal{T}_{\bar{e}}$ covers all of $\mathbb{R}^d$. Note that by the previous Lemma, if $\mathcal{B}^+_x$ is minimal, then $\Omega_{\mathcal{T}_{\bar{e}_1}} = \Omega_{\mathcal{T}_{\bar{e}_2}}$ for any $\bar{e}_1,\bar{e}_2\in X_{\mathcal{B}}^+$. In such cases we denote the tiling space simply by $\Omega_{\mathcal{B}}$ or, if $\mathcal{B}$ is defined by a parameter $x\in X_\mathcal{F}$, we write $\Omega_x$. The following is a consequence of the previous Lemma. 
\begin{corollary}
  \label{cor:Robinson}
Let  $\mathcal{F}$ a family of $N$ uniformly expanding and compatible substitutions, $x\in X_\mathcal{F}$, and $\mathcal{B}_x$ be a Bratteli diagram such that the set $\mathcal{E}_k^+$ in $\mathcal{B}^+_x$ is defined by $\mathcal{F}_{x_k}$. Suppose $\mathcal{B}_x^+$ is minimal. Then the assignment $\bar{e}\mapsto\mathcal{T}_{\bar{e}}$ defines a surjective, continuous map $\bar{\Delta}_x:\mathring{X}^+_{\mathcal{B}_x}\rightarrow \mho_{x}$, where $\mho_x$ is the canonical transversal of $\Omega_{\mathcal{T}_{\bar{e}}}$, $\bar{e}\in \mathring{X}^+_{\mathcal{B}}$. 
\end{corollary}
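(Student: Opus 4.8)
\textbf{Proof proposal for Corollary \ref{cor:Robinson}.}

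The plan is to upgrade the pointwise assignment $\bar e \mapsto \mathcal{T}_{\bar e}$ of Definition \ref{def:tiling} to a map into the canonical transversal by first checking it genuinely lands in $\mho_x$, then checking well-definedness on tail classes, and finally extracting continuity and surjectivity from Lemma \ref{lemma:deform} together with minimality. First I would observe that each approximant $\mathcal{P}_k(\bar e)$ is a tiled patch containing the origin in the interior of the tile $t_{s(\bar e)}$, since each $f_e$ has the form $f_e(x)=\theta_e x + \tau_e$ and the prototiles contain the origin in their interior; as the approximants are nested with $\mathcal{T}_{\bar e}=\bigcup_k \mathcal{P}_k(\bar e)$, the tile of $\mathcal{T}_{\bar e}$ containing the origin is exactly the translated copy of $t_{s(\bar e)}$ placed with its distinguished point at $0$. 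Hence for $\bar e\in\mathring X^+_{\mathcal{B}_x}$ (so that $\mathcal{T}_{\bar e}$ tiles all of $\mathbb{R}^d$) we have $\mathcal{T}_{\bar e}\in\mho_x$, which makes $\bar\Delta_x$ well-defined as a set map $\mathring X^+_{\mathcal{B}_x}\to\mho_x$; one also has to note $\Omega_{\mathcal{T}_{\bar e}}$ is a single space independent of $\bar e$ by the remark preceding the statement, using minimality of $\mathcal{B}^+_x$ and part (ii) of Lemma \ref{lemma:deform}, so $\mho_x$ is unambiguous.

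Next I would address surjectivity. Given $\mathcal{T}'\in\mho_x$, I want a path $\bar e$ with $\mathcal{T}_{\bar e}=\mathcal{T}'$. The hierarchical structure dictated by $x$ means every tiling in $\Omega_x$ decomposes uniquely into level-$k$ supertiles for every $k$ (this is where one invokes recognizability / the structure of $\Omega_x$ built from the $\mathcal{F}_{x_k}$); the origin lies in a unique level-$k$ supertile for each $k$, and these form a nested sequence of patches $\mathcal{P}_1\subset\mathcal{P}_2\subset\cdots$ each equal to $\mathcal{P}_k(\bar e|_k)$ for a uniquely determined truncated path $\bar e|_k\in\mathcal{E}^+_{0,k}$, and these truncations are consistent, so they assemble into a single $\bar e\in X^+_{\mathcal{B}_x}$. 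Since $\mathcal{T}'$ tiles $\mathbb{R}^d$, $\bar e\in\mathring X^+_{\mathcal{B}_x}$, and since $\mathcal{T}'$ is in the canonical transversal the supertiles around the origin are positioned so that $\bigcup_k\mathcal{P}_k(\bar e)=\mathcal{T}'=\mathcal{T}_{\bar e}$. This proves $\bar\Delta_x$ is onto. The extension/continuity statement then follows because two paths agreeing up to level $k$ produce tilings agreeing on the patch $\mathcal{P}_k$, hence on a large ball around the origin once $k$ is large, so $\bar\Delta_x$ is continuous on $\mathring X^+_{\mathcal{B}_x}$ with the cylinder-set topology; part (i) of Lemma \ref{lemma:deform} guarantees that tail-equivalent paths map to translates, consistent with everything landing in the single transversal $\mho_x$.

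The main obstacle I expect is the surjectivity step, specifically justifying that an arbitrary $\mathcal{T}'\in\mho_x$ admits a unique infinite nested supertile decomposition compatible with the combinatorics of $x$. This is essentially a recognizability statement for the (globally) random substitution structure, and while for $d=1$ it is classical and for higher $d$ it follows from the way $\Omega_x$ is constructed in \cite{ST:random}, one must be careful that the decomposition at level $k$ is genuinely determined by $x$ and that the levels nest consistently; the uniform expansion hypothesis (all maps in $\mathcal{F}_\ell$ scale by the same $\theta_\ell$) is what keeps the supertile geometry rigid enough for this to work, and compatibility is what ensures finite local complexity so that $\mho_x$ is the Cantor transversal one expects. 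Everything else — landing in $\mho_x$, continuity, behavior on tail classes — is a direct unwinding of the definitions together with Lemma \ref{lemma:deform}.
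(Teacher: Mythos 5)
The paper offers no argument of its own here beyond the remark that the corollary is ``a consequence of the previous Lemma'' (Lemma \ref{lemma:deform}, imported from \cite{ST:random}), so the first half of your sketch --- that $\mathcal{T}_{\bar{e}}$ has the copy of $t_{s(\bar{e})}$ at the origin with its puncture at $0$, hence lies in the transversal, that $\Omega_{\mathcal{T}_{\bar{e}}}$ and therefore $\mho_x$ are independent of $\bar{e}$ by minimality and Lemma \ref{lemma:deform}(ii), that cylinder sets give continuity, and that tail-equivalent paths go to translates by Lemma \ref{lemma:deform}(i) --- is exactly the routine unwinding the paper has in mind and is fine.

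The genuine gap is in your surjectivity step, at the sentence ``Since $\mathcal{T}'$ tiles $\mathbb{R}^d$, $\bar{e}\in\mathring{X}^+_{\mathcal{B}_x}$.'' Membership in $\mathring{X}^+_{\mathcal{B}_x}$ is not about $\mathcal{T}'$ covering $\mathbb{R}^d$; by Definition \ref{def:tiling} it requires that the nested sequence of supertiles \emph{containing the origin} exhausts $\mathbb{R}^d$, since $\mathcal{T}_{\bar{e}}=\bigcup_k\mathcal{P}_k(\bar{e})$ consists only of those supertiles. The transversal $\mho_x$ contains tilings for which this fails: taking limits of translates $\varphi_{t_n}(\mathcal{T}_{\bar{e}})$ with $t_n$ chosen a bounded distance from the boundary of the level-$n$ supertile, one obtains tilings in $\Omega_x$ (and, after a small translation, in $\mho_x$) in which the tile at the origin abuts a supertile boundary of every level, so the union of the origin's supertiles is only a half-space or smaller. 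For such $\mathcal{T}'$ your construction produces a path lying \emph{outside} $\mathring{X}^+_{\mathcal{B}_x}$ with $\bigcup_k\mathcal{P}_k(\bar{e})\subsetneq\mathcal{T}'$, and your argument supplies no preimage; this is precisely the boundary case that the word ``extends'' in the statement is meant to absorb, and handling it (via limits along tail-equivalence classes and the translation identifications of Lemma \ref{lemma:deform}, i.e. the construction of \cite{ST:random}) is the actual content of the corollary rather than a corner case. Relatedly, your appeal to a \emph{unique} level-$k$ decomposition of an arbitrary $\mathcal{T}'\in\mho_x$ is a recognizability statement that is nowhere established in this paper for globally random substitutions; you flag it as the main obstacle, but the argument as written both assumes it and then draws from it the unjustified conclusion above, so the surjectivity half of the proof does not go through as stated.
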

\begin{definition}
  \label{def:minimal}
  A probability measure $\mu$ on $\Sigma_N$ is \textbf{minimal} if the set of $x$ for which $\mathcal{B}_x$ is minimal has full measure.
\end{definition}
The following is \cite[Proposition 2]{ST:random}.
\begin{proposition}
  \label{prop:measBij}
Let $\mathcal{F}$ a family of $N$ uniformly expanding and compatible substitutions, $x\in X_\mathcal{F}$, $\mathcal{B}_x^+$ a minimal Bratteli diagram such that the set $\mathcal{E}_k^+$ in $\mathcal{B}^+_x$ is defined by $\mathcal{F}_{x_k}$. Suppose that $\mu(\mathring{X}^+_{\mathcal{B}}) = 1$ for any probability measure $\mu$ on $X^+_{\mathcal{B}}$ which is invariant under the tail equivalence relation. Then the map $\bar{\Delta}_x$ in Corollary \ref{cor:Robinson} provides a bijection between measures $\mu$ on $X^+_\mathcal{B}$ which are invariant under the tail equivalence relation and measures on $\mho_x$ which are holonomy-invariant.
\end{proposition}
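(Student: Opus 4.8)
The plan is to show that, after discarding sets that carry no mass for any of the measures in question, $\bar\Delta_x$ is an isomorphism carrying the tail-equivalence groupoid on $X^+_{\mathcal{B}_x}$ onto the holonomy groupoid on $\mho_x$, and then to observe that the two invariance conditions are each pinned down by the induced masses of cylinder sets, so that they correspond to each other under $\bar\Delta_x$.

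\emph{Step 1: essential invertibility of $\bar\Delta_x$.} By Corollary \ref{cor:Robinson}, $\bar\Delta_x:\mathring X^+_{\mathcal{B}_x}\to\mho_x$ is a continuous surjection. I would argue that it is injective off a set $Z\subset\mho_x$ whose preimage is null for every tail-invariant probability measure: for $\mathcal{T}'\in\mho_x\setminus Z$ and every $k$, the decomposition of $\mathcal{T}'$ into level-$k$ supertiles is unique (a recognizability / unique-composition property, which for uniformly expanding compatible families can be established as in the deterministic case), so the nested sequence of supertiles containing the origin-tile reconstructs a unique path $g(\mathcal{T}')\in\mathring X^+_{\mathcal{B}_x}$ with $\bar\Delta_x(g(\mathcal{T}'))=\mathcal{T}'$. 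Combined with the hypothesis $\mu(\mathring X^+_{\mathcal{B}})=1$, which places every tail-invariant $\mu$ on the domain of $\bar\Delta_x$, this makes $\bar\Delta_x$ a measure isomorphism mod $0$ with measurable inverse $g$; hence $\mu\mapsto(\bar\Delta_x)_*\mu$ and $\nu\mapsto g_*\nu$ are mutually inverse bijections of probability measures on the two spaces, and it only remains to match the two invariance notions under this correspondence.

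\emph{Step 2: cylinder sets, the groupoids, and the invariance conditions.} For a finite path $\bar e|_k\in\mathcal{E}^+_{0,k}$, the image $\bar\Delta_x(C(\bar e|_k))$ coincides, mod $0$, with the cylinder set $\mathcal{C}_{\mathcal{P}_k(\bar e),t_0}$, where $t_0\subset\mathcal{P}_k(\bar e)$ is the origin-tile; since the substitutions are uniformly expanding, $\mathcal{P}_k(\bar e)$ has inradius growing with $k$, and by repetitivity every occurrence of a fixed patch $\mathcal{P}$ lies inside a level-$k$ supertile once $k$ is large, so each $\mathcal{C}_{\mathcal{P},t}$ is, mod $0$, a finite disjoint union of supertile cylinder sets. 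Thus the two families of cylinder sets generate the same measurable structure and it suffices to compare measures on supertile cylinder sets. Now the tiles of $\mathcal{P}_k(\bar e)$ are indexed precisely by the paths $\bar e'\in\mathcal{E}^+_{0,k}$ with $r(\bar e')=r(\bar e|_k)$, with $\bar e|_k$ indexing $t_0$; for such a $\bar e'$ let $\bar e^*$ be the path agreeing with $\bar e$ beyond level $k$ and with $\bar e^*|_k=\bar e'$, so $\bar e^*\sim\bar e$, and by Lemma \ref{lemma:deform}(1) the tiling $\mathcal{T}_{\bar e^*}$ is the translate of $\mathcal{T}_{\bar e}$ by exactly the holonomy $\tau(\mathcal{P}_k(\bar e),t_0,t')$ onto the tile $t'$ indexed by $\bar e'$; conversely every holonomy $\tau(\mathcal{P},t,t')$ refines, mod $0$, into holonomies of this form. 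Consequently a probability measure $\mu$ on $X^+_{\mathcal{B}_x}$ is tail-invariant --- i.e.\ $\mu(C(p_1))=\mu(C(p_2))$ whenever $p_1,p_2\in\mathcal{E}^+_{0,k}$ satisfy $r(p_1)=r(p_2)$ --- if and only if $(\bar\Delta_x)_*\mu$ assigns equal mass to $\mathcal{C}_{\mathcal{P}_k(\bar e),t_0}$ and $\mathcal{C}_{\mathcal{P}_k(\bar e),t'}$ for all levels, supertiles and tiles; and because supertile cylinder sets at all levels generate the Borel $\sigma$-algebra of $\mho_x$ and the holonomies permute them, this is equivalent to $(\bar\Delta_x)_*\mu$ being holonomy-invariant. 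Running the same equivalences through $g_*\nu$ gives the converse direction, so $\bar\Delta_x$ induces the asserted bijection.

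The step I expect to be the main obstacle is Step 1: establishing the unique-composition (recognizability) property for these globally random substitution tilings and verifying that the bad set $Z$ --- morally, the tilings whose supertile decomposition fails at some level, together with the images under $\bar\Delta_x$ of paths $\bar e$ with $\mathcal{T}_{\bar e}$ not covering $\mathbb{R}^d$ --- has preimage null for every tail-invariant measure, which is precisely where the hypothesis $\mu(\mathring X^+_{\mathcal{B}})=1$ and the minimality of $\mathcal{B}^+_x$ are used. A secondary point requiring care is the mod-$0$ bookkeeping in Step 2: one must check that refining a general cylinder set into supertile cylinder sets, and a general holonomy into supertile holonomies, discards only sets that every tail-invariant (respectively holonomy-invariant) measure ignores.
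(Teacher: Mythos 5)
The paper does not actually prove this proposition; it is quoted verbatim from \cite[Proposition 2]{ST:random}, so there is no internal argument to compare yours against. Judged on its own terms, your outline follows the natural route (and Step 2 --- reading tail-invariance and holonomy-invariance off the masses of path-cylinders versus supertile-cylinders --- is the standard core of such correspondences), but it has a genuine gap exactly at the point you try to wave through: the claim in Step 1 that recognizability ``can be established as in the deterministic case.'' In the deterministic case unique composition requires aperiodicity (Moss\'e's theorem, and Solomyak's tiling version), and no aperiodicity hypothesis appears in the proposition; minimality of $\mathcal{B}^+_x$, uniform expansion, compatibility and $\mu(\mathring{X}^+_{\mathcal{B}})=1$ do not by themselves give it. The paper's own solenoid family of \S\ref{sec:DK} falls under the hypotheses, and there every path $\bar{e}$ produces the same periodic cube tiling of $\mathbb{R}^d$ (unless one secretly decorates tiles with their hierarchical data), so $\bar{\Delta}_x$ is nowhere near injective off a null set and your measurable inverse $g$ does not exist; the bijection of measures in that example cannot be obtained by the mechanism you propose. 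Moreover, your Step 2 silently uses the same property: refining $\mathcal{C}_{\mathcal{P},t}$ into supertile cylinder sets and getting \emph{additivity} requires that the images $\bar{\Delta}_x(C(\bar{p}))$ for distinct truncations $\bar{p}$ be essentially disjoint, which is again recognizability, not bookkeeping.

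A second, related gap concerns the direction from $\mho_x$ back to $X^+_{\mathcal{B}_x}$. Your argument needs the exceptional set $Z$ (tilings with no, or with several, compatible hierarchies) to be negligible for \emph{every holonomy-invariant measure on $\mho_x$}; the stated hypothesis $\mu(\mathring{X}^+_{\mathcal{B}})=1$ is a condition on tail-invariant measures on the path space and does not transfer to the transversal side without further argument. Without that, surjectivity of $\mu\mapsto(\bar{\Delta}_x)_*\mu$ onto holonomy-invariant measures is not established: one would instead have to build the tail-invariant preimage directly from the vertex weights $p_k(v)=\nu(\mathcal{C}_{\mathcal{P}_v,t_0})$ and verify the compatibility relations $p_k(v)=\sum_{e\in\mathcal{E}^+_{k+1},\,s(e)=v}p_{k+1}(r(e))$, and that verification is precisely where unique decomposition into level-$(k+1)$ supertiles enters. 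So the two items you flag as ``obstacles'' are in fact the entire mathematical content of the proposition, and as written the proposal defers rather than supplies them; a complete argument either has to prove recognizability under these hypotheses (or under whatever convention on decorated tiles \cite{ST:random} uses) or must bypass pointwise invertibility of $\bar{\Delta}_x$ altogether and argue at the level of cylinder-set data.
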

\begin{proposition}
  \label{prop:UE}
  Let  $\mathcal{F}$ a family of $N$ uniformly expanding and compatible substitutions, and $\mu$ a minimal, ergodic $\sigma$-invariant Borel probability measure on $X_\mathcal{F}$. Then for $\mu$-almost every $x\in X_\mathcal{F}$ we have that there is a unique probability measure $\mu_x$ on $X^+_{\mathcal{B}_x}$ which is invariant under the tail equivalent relation. Moreover, we have that $\mu_x(\mathring{X}^+_{\mathcal{B}_x}) = 1$ and there is a unique $\mathbb{R}^d$-invariant probability measure on $\Omega_x$.
\end{proposition}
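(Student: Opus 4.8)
The plan is to establish the three assertions in turn and deduce the last from the first two. Since \emph{$\mu$ is a minimal measure}, for $\mu$-a.e.\ $x$ the diagram $\mathcal B_x^+$ is minimal, so Corollary \ref{cor:Robinson} and Proposition \ref{prop:measBij} apply once their standing hypothesis is checked; granting that the unique tail-invariant probability measure $\mu_x$ satisfies $\mu_x(\mathring X^+_{\mathcal B_x})=1$, that hypothesis holds (vacuously for any other tail-invariant measure, since there is only one), so $\bar\Delta_x$ induces a bijection between tail-invariant probability measures on $X^+_{\mathcal B_x}$ and holonomy-invariant probability measures on $\mho_x$. Composing with the correspondence between holonomy-invariant measures on $\mho_x$ and $\mathbb R^d$-invariant measures on $\Omega_x$ coming from the local product structure $\mathrm{Leb}\times(\text{Cantor})$ of \S\ref{sec:tilings}, uniqueness on $X^+_{\mathcal B_x}$ forces existence and uniqueness on $\Omega_x$. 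So the content is: (a) uniqueness of the tail-invariant probability measure on $\mathcal B_x^+$, and (b) $\mu_x(\mathring X^+_{\mathcal B_x})=1$.

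For (a), record a tail-invariant probability measure $\nu$ by the vectors $\rho^{(k)}\in\mathbb R^{|\mathcal V_k|}_{\ge 0}$, $\rho^{(k)}_v:=\nu(C(\bar e))$ for any $\bar e\in\mathcal E_{0,k}$ with $r(\bar e)=v$; tail-invariance makes this well defined, and the splitting $C(\bar e)=\bigsqcup_{s(e)=r(\bar e)}C(\bar e e)$ gives $\rho^{(k)}=A_{x_{k+1}}^{\top}\rho^{(k+1)}$, where $A_{x_j}$ is the (possibly rectangular) incidence matrix of $\mathcal F_{x_j}$ realizing $\mathcal E_j$. Hence $\rho^{(k)}\in\bigcap_{m>k}(A_{x_m}\cdots A_{x_{k+1}})^{\top}\bigl(\mathbb R^{|\mathcal V_m|}_{\ge 0}\bigr)$, and $\nu$ is unique iff each of these nested intersections of convex cones is a single ray. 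Pick any $x_0$ with $\mathcal B_{x_0}^+$ minimal; minimality yields a finite telescoping, hence a finite sub-word $w$ of $x_0$ with $A_w$ strictly positive. Since $w$ is legal, $[w]\subset X_\mathcal F$ is open and meets $\mathrm{supp}(\mu)$, so $\mu([w])>0$; the Birkhoff ergodic theorem for $(\sigma,\mu)$ then gives, for $\mu$-a.e.\ $y$, infinitely many disjoint occurrences of the strictly positive block $A_w$ in the products $A_{y_\ell}\cdots A_{y_1}$. Every non-negative matrix with no zero row is non-expanding for the Hilbert projective metric of the positive cone, while a strictly positive one contracts it by a fixed factor $c<1$ (Birkhoff's contraction theorem); a product containing $q$ copies of $A_w$ thus contracts by $\le c^q$, so the projective diameter of the image cone tends to $0$ as $\ell\to\infty$, each intersection above is a ray, and $\nu$ is determined up to scale, the normalization being pinned by $\rho^{(0)}=(A_{x_k}\cdots A_{x_1})^{\top}\rho^{(k)}$. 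Existence of at least one tail-invariant probability measure is routine (weak-$*$ compactness, or the finite intersection property for the nested cones).

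For (b), note that $\mathring X^+_{\mathcal B_x}$ is tail-invariant: by Lemma \ref{lemma:deform}(i), $\bar e\sim\bar e^*$ implies $\mathcal T_{\bar e^*}=\mathcal T_{\bar e}+\tau$, so the two tilings cover $\mathbb R^d$ simultaneously; hence $\mu_x(\mathring X^+_{\mathcal B_x})\in\{0,1\}$ by ergodicity under the tail relation, and it suffices to rule out $0$ — in fact we show $\nu(\mathring X^+_{\mathcal B_x})=1$ for every tail-invariant probability measure $\nu$, by Borel--Cantelli. Fix $R>0$ and set $E_{k,R}=\{\bar e:\mathrm{dist}(0,\partial\mathcal P_k(\bar e))\le R\}$. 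Since $\mathcal P_k(\bar e)=f_{\bar e|_k}^{-1}(S^{(k)}(r(\bar e|_k)))$ with $f_{\bar e|_k}$ the uniform contraction by $\Theta_k:=\theta_{x_1}\cdots\theta_{x_k}$ (the same for every path of length $k$) and $S^{(k)}(v)$ a refinement-tiling of a fixed prototile $t_v$, membership in $E_{k,R}$ forces the tile of $\mathcal T_{\bar e}$ through the origin, seen inside $S^{(k)}(v)$, to lie within $CR$ tiles of $\partial S^{(k)}(v)$ for a constant $C$ depending only on the prototiles. By tail-invariance $\nu(E_{k,R})=\sum_v\nu(\{r(\bar e|_k)=v\})\beta_k(v)\le\max_v\beta_k(v)=:\beta_k$, where $\beta_k(v)$ is the proportion of tiles of $S^{(k)}(v)$ within $CR$ tiles of $\partial S^{(k)}(v)$; this bound is uniform in $\nu$. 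Since $\partial S^{(k)}(v)=\partial t_v$ has $(d{-}1)$-content bounded independently of $k$, there are $O(\Theta_k^{-(d-1)})$ such near-boundary tiles, while $S^{(k)}(v)$ has $\asymp\Theta_k^{-d}$ tiles, so $\beta_k=O(\Theta_k)=O(\theta_{\max}^k)$ and $\sum_k\beta_k<\infty$. Borel--Cantelli then gives $\mathrm{dist}(0,\partial\mathcal P_k(\bar e))>R$ for all large $k$, $\nu$-a.e.; as the $\mathcal P_k(\bar e)$ are nested and contain $0$, this yields $B_R(0)\subset\mathcal T_{\bar e}$ for $\nu$-a.e.\ $\bar e$, and intersecting over $R\in\mathbb N$ gives $\nu(\mathring X^+_{\mathcal B_x})=1$.

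The main obstacle is the geometric decay of $\beta_k$, i.e.\ controlling the proportion of tiles near the boundary of a level-$k$ supertile. The point is that a supertile is merely a refinement of one of finitely many prototiles, so its boundary has $(d{-}1)$-content bounded uniformly in $k$ and in the random sequence; converting this into the count $O(\Theta_k^{-(d-1)})$ of near-boundary tiles uses that $\partial t_v$ has finite upper Minkowski content — a mild strengthening of the standing hypothesis $\mathcal H^{d-1}(\partial t)<\infty$ — together with finite local complexity to keep the boundary combinatorics from proliferating. The other step needing care, and the only one not internal to this excerpt, is the input to (a) that minimality of the $\mathbb R^d$-action on $\Omega_x$ is equivalent to minimality (simplicity) of $\mathcal B_x^+$, which is where the hypothesis that $\mu$ is a minimal measure is used.
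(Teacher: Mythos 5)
Your proposal is correct in outline, but it takes a genuinely different and more self-contained route than the paper. The paper's own proof is essentially a reduction to two external results: it fixes a Poincar\'e-recurrent $x$ in $\mathrm{supp}\,\mu$ with $\mathcal{B}_x$ minimal, uses the returns of $x$ to a cylinder $U_x$ (on which a fixed strictly positive block occurs) to show that the number of paths $|\mathcal{E}^+_{0,k}|$ grows at a uniform positive exponential rate $\lambda_\mu>0$ for every vertex, and then quotes \cite[Lemma 3]{ST:random} to get $\nu(\mathring{X}^+_{\mathcal{B}_x})=1$ for every tail-invariant $\nu$, and the Masur-type criterion of \cite{trevino:bratMasur} for uniqueness of the tail-invariant measure, before finishing with Proposition \ref{prop:measBij} exactly as you do. You instead reprove both cited ingredients: uniqueness via the nested-cone description of tail-invariant measures and Birkhoff's Hilbert-metric contraction applied to infinitely many disjoint occurrences of a positive block (which is in spirit how the cited criterion is proved), and $\nu(\mathring{X}^+_{\mathcal{B}_x})=1$ via a direct Borel--Cantelli estimate on the proportion of near-boundary tiles in level-$k$ supertiles, using only the uniform contraction rates rather than the path-count exponent. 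What your route buys is transparency and independence from the two references; what the paper's route buys is brevity and the fact that the quoted Lemma 3 is stated under hypotheses (positive growth exponent) already verified, so no extra geometric regularity enters.

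Two caveats. First, your justification that $\mu([w])>0$ is not valid as written: you pick an arbitrary $x_0$ with $\mathcal{B}_{x_0}^+$ minimal and assert that the legal word $w$ meets $\mathrm{supp}(\mu)$, but legality alone does not give positive measure ($\mu$ could be carried by a subshift omitting $w$). The fix is exactly the paper's choice: since $\mu$ is a minimal measure, take $x_0=x$ to be a $\mu$-generic (recurrent, in the support) point with $\mathcal{B}_x$ minimal and let $w$ be an initial word of $x$ itself, so $x\in[w]\cap\mathrm{supp}\,\mu$ and $\mu([w])>0$; note also that with the paper's definition of a minimal measure in \S\ref{subsec:renorm} (that $\mathcal{B}_x$ is minimal for a.e.\ $x$), the equivalence with minimality of the $\mathbb{R}^d$-action that you flag at the end is not actually needed. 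Second, your bound $\beta_k=O(\Theta_k)$ uses finite upper Minkowski content of $\partial t_v$, which, as you acknowledge, is slightly stronger than the standing assumption $\mathcal{H}^{d-1}(\partial t)<\infty$; the paper's route through \cite[Lemma 3]{ST:random}, whose input is the growth exponent $\lambda_\mu>0$ established by the recurrence argument, avoids invoking this strengthening at this point, so if you keep your geometric argument you should state the content hypothesis explicitly.
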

\begin{proof}
  For $x\in X_\mathcal{F}$, define $\lambda: X_\mathcal{F}\rightarrow \mathbb{R}$ by
  $$\lambda_x = \limsup_{k\rightarrow \infty}\frac{\log |\mathcal{E}^x_{0,k}|}{k},$$
for all $x\in X_\mathcal{F}$, where $|\mathcal{E}^x_{0,k}|$ is the number of paths from $\mathcal{V}_0$ to $\mathcal{V}_k$ on $\mathcal{B}_x$. Note that this is a $\sigma$-invariant function, so it is constant $\mu$-almost everywhere. Denote by $\lambda_\mu$ this value and $A_\mu\subset X_\mathcal{F}$ the full $\mu$-measure set such that $\lambda_x = \lambda_\mu$ for all $x\in A_\mu$.
  
  Let $x\in A_\mu \cap \mathrm{supp}\, \mu$ be a Poincar\'{e} recurrent point and let $\mathcal{B}_x$ be the corresponding Bratteli diagram. By minimality there exists a $k^*>0$ such that for any $v\in\mathcal{V}^+_0$ and $w\in\mathcal{V}^+_{k^*}$ there is a path $\bar{p}\in \mathcal{E}^+_{0,k^*}$ with $s(\bar{p}) = v$ and $r(\bar{p}) = w$. Let $U_x\subset X_\mathcal{F}$ be  the cylinder set defined by $U_x = \{y\in\bar{\Sigma}_N:y_i = x_i\mbox{ for all }i=1,\dots, k^* \}$ and note that $\mu(U_x)>0$. Let $k_i\rightarrow \infty$ be the sequence of first return times to $U_x$ for $x$. That is, $\sigma^{k_i}(x)\in U_x$ for all $i>0$ and $\sigma^{k}(x)\not\in U_x$ if $k\neq k_i$ for some $i$. Note that by the definitions of $U_x$, $k_i$ and $k^*$ there is a positive matrix $M_x$ such that the number of paths between $v\in \mathcal{V}^+_{k_i}$ and $w\in \mathcal{V}^+_{k_i+k^*}$ is given by $M_x(v,w)$. Let $\lambda_{PF}$ denote the Perron-Frobenius eigenvalue of $M_x$. Then for any $\varepsilon$ there exists a $C_\varepsilon$ such that for $v\in \mathcal{V}^+_{k'}$ with $k_i\leq k'< k_{i+1}$ we have that
\begin{equation}
  \label{eqn:PFestimate}
  |\mathcal{E}^+_{0,k_i}|\geq C_\varepsilon\left( \frac{\lambda_x}{e^\varepsilon}\right)^i.
\end{equation}
Since $\mu(U_x)>0$ it follows from the estimate above that $\lambda_\mu \geq \log\lambda_{PF} - \varepsilon$ for any $\varepsilon>0$, so $\lambda_\mu>0$. So for any $\varepsilon>0$ there is a $C_\varepsilon$ so that $|\mathcal{E}^+_{0,k}|\geq C_\varepsilon e^{(\lambda_\mu - \varepsilon)k}$ for all $k>0$. It follows from this, minimality and recurrence of $x$ that for the two quantities
$$\lambda^-:= \min_{v\in \{v_1,\dots, v_M\}} \left\{ \liminf_{k\rightarrow \infty}\frac{\log |\mathcal{E}^+_{v}|}{k}\right\} \hspace{.4in}\mbox { and } \hspace{.4in} \lambda^+:= \max_{v\in \{v_1,\dots, v_M\}} \left\{ \limsup_{k\rightarrow \infty}\frac{\log |\mathcal{E}^+_{v}|}{k}\right\},$$
we have that $\lambda^- = \lambda^+\geq \lambda_{PF}>0$. That $\mu(\mathring{X}^+_{\mathcal{B}_x}) = 1$ now follows by \cite[Lemma 3]{ST:random} for any Borel probability measure $\mu$ which is invariant under the tail-equivalence relation. That there is a unique such measure follows from the main result of \cite{trevino:bratMasur}, so the uniqueness of an invariant measure on $\Omega_x$ follows from Proposition \ref{prop:measBij}.
\end{proof}
\subsection{Renormalization}
\label{subsec:renorm}
There are two advantages of using bi-infinite Bratteli diagrams as opposed to the usual diagrams indexed by $\mathbb{N}$. The first is that the path space of a bi-infinite diagram $\mathcal{B}_x$ parametrizes all tilings in a tiling space $\Omega_x$ in a continuous way (see Proposition \ref{prop:Robinson2} in \S \ref{subsec:renorm}), and not just the ones associated to canonical transversals, as it happens with traditional (one-sided) Bratteli diagrams. This permits one to transfer properties back and forth between the path space of the bi-infinite diagram and the corresponding tiling space (see Proposition \ref{prop:measBij}). The second and more important advantage is that one can shift the labels of a diagram $\mathcal{B}_x$ to obtain a diagram $\mathcal{B}_{\sigma(x)}$ and this process is equivariant with a homeomorphism of tiling spaces $\Phi_x:\Omega_{x}\rightarrow \Omega_{\sigma(x)}$. Having $x$ belong to a two-sided shift allows this operation to be invertible, which will allow for the semi-invertible Oseledets theorem to be applied in \S \ref{subsec:TrCocycle}.

Consider a minimal measure $\mu$ and note that being minimal is a $\sigma$-invariant property: $\mathcal{B}_x$ is minimal if and only if $\mathcal{B}_{\sigma(x)}$ is. As such, for an $\sigma$-invariant ergodic Borel probability measure $\mu$ then the set of minimal diagrams $\mathcal{B}_x$ has either full or null measure.


Let
$$\mathring{X}_\mathcal{B} = \{(x^-,x^+)\in X_\mathcal{B}: x^+\in \mathring{X}^+_\mathcal{B}\}.$$
\begin{proposition}
  \label{prop:Robinson2}
  Let $\mathcal{F}$ be a family of $N$ uniformly expanding and compatible substitutions on a set of prototiles, and suppose that $\mathcal{B}_x$ is minimal. Then the map $\bar{\Delta}_x$ from Corollary \ref{cor:Robinson} extends to a continuous surjective map $\Delta_x:\mathring{X}_\mathcal{B}\rightarrow \Omega_x$
  \end{proposition}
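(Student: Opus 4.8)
The plan is to promote the map $\bar{\Delta}_x:\mathring{X}^+_{\mathcal{B}_x}\to\mho_x$ from Corollary \ref{cor:Robinson} to a map on bi-infinite paths that records not only the hierarchical structure of the tiling but also the position of the origin inside the tile containing it. The key observation is that a bi-infinite path $(x^-,x^+)\in\mathring{X}_\mathcal{B}$ contains strictly more information than $x^+$ alone: the positive part $x^+$ determines, via $\bar\Delta_x$, a point of the canonical transversal $\mho_x$, i.e.\ a tiling $\mathcal{T}_{x^+}\in\Omega_x$ with a distinguished point of the tile containing the origin sitting at the origin; the negative part $x^-$ should be used to specify a translation vector $\tau(x^-)\in t_{s(x^+)}$ telling us where inside that tile the origin of the tiling actually sits. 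We then set $\Delta_x(x^-,x^+) = \varphi_{\tau(x^-)}\big(\bar\Delta_x(x^+)\big)$. Concretely, the negative part indexes a nested decreasing sequence of ``rescaled'' supertile neighborhoods of the origin: using the maps $f_e$ with $e\in\mathcal{E}_k$, $k<0$, one forms $f_{e_{-1}}\circ\cdots\circ f_{e_{-n}}(t_{\ast})$, a shrinking sequence of subsets of the prototile $t_{s(x^+)}$, whose intersection (by uniform contraction, $\theta_\ell<1$) is a single point $\tau(x^-)$; one must check this point lies in the interior so that the translated tiling is still in $\Omega_x$ and the assignment is well-defined.

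The steps, in order, would be: (i) define $\tau(x^-)$ as the limit of the nested sequence of supertile images under the composed contractions indexed by $x^-$, and verify convergence and interiority using uniform expansion of $\mathcal{F}$; (ii) define $\Delta_x(x^-,x^+):=\varphi_{\tau(x^-)}\circ\bar\Delta_x(x^+)$ and check it lands in $\Omega_x$ (it does, since $\Omega_x$ is translation-invariant and, by minimality, independent of the path, by Lemma \ref{lemma:deform}); (iii) prove continuity — this is where the metric \eqref{eqn:metric1}–\eqref{eqn:metric2} is used: two nearby bi-infinite paths agree on a long central block $(e_{-n},\dots,e_m)$, which forces the corresponding tilings to agree on a large patch around the origin after a small translation, because the level-$m$ supertile $\mathcal{P}_m$ coincides and the translation vectors $\tau(x^-)$ are close to within $\sim\theta_{\max}^n$; (iv) prove surjectivity — given $\mathcal{T}'\in\Omega_x$, move it by a small translation to land on the transversal to get $\bar\Delta_x$-preimage $x^+$, then recover $x^-$ by reading off which sub-supertile of each approximant contains the actual origin, using that every tile of $\mathcal{T}_{\bar e}$ is eventually contained in all approximants (Definition \ref{def:tiling}) together with minimality to handle the boundary ambiguity.

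The main obstacle I expect is (iii)–(iv) at points where the origin lies on the boundary of some supertile at every level, i.e.\ where $\tau(x^-)$ is not in the interior of some $\mathcal{P}_k$; there the recovery of $x^-$ from $\mathcal{T}'$ is not unique and the naive map is only defined on a residual set. The standard fix, and the one I would pursue, is exactly the passage recorded in the statement: one does not get a bijection or a homeomorphism, only a continuous surjection, and one absorbs the boundary ambiguity by noting that the set of bi-infinite paths whose origin is ``pinned'' to a supertile boundary maps onto a measure-zero, nowhere-dense set of $\Omega_x$, while off that set $\Delta_x$ is a local homeomorphism respecting the local product structure $B_\varepsilon(0)\times\mathcal{C}$. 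Continuity on the bad set then follows by a limiting argument from continuity on the good set plus compactness of $X_\mathcal{B}$ and of $\Omega_x$. The uniform expansion hypothesis is what makes all the contraction estimates uniform in the path, so nothing here requires understanding the global topology of $\Omega_x$, only the combinatorics of $\mathcal{B}_x$.
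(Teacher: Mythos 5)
Your construction is essentially the paper's own proof: the positive path gives the transversal point via $\bar{\Delta}_x$, and the negative path is used to select a nested sequence of contracting sub-tiles of the tile at the origin whose intersection is a single point, yielding the translation vector defining $\Delta_x(x^-,x^+)=\varphi_{\tau(x^-)}\bigl(\bar{\Delta}_x(x^+)\bigr)$. The paper records exactly this recursive choice-of-subtile argument (and, like you, leaves continuity and surjectivity as brief remarks), so your additional worries about boundary points go beyond, but do not diverge from, the published argument.
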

\begin{proof}
  Let $\bar{e} = (e^-,e^+)\in\mathring{X}_{\mathcal{B}_x}$. The discussion leading to Corollary \ref{cor:Robinson} shows how $e^+\in \mathring{X}^+_{\mathcal{B}_x}$ determines a point in the canonical transversal $\mho_x\subset \Omega_x = \Omega_{\mathcal{T}_{e^+}}$. It is left to show what role $e^-$ plays.

  What $e^-$ determines is a vector $\tau_{e^-}$ so that $\Delta(\bar{e}) = \varphi_{\tau_{e^-}}(\mathcal{T}_{e^+})$ and this is done as follows (there is a concrete example worked out in section \ref{sec:interlude}, in case the reader would find that helpful as they read the construction). Consider the tile $t$ containing the origin in $\mathcal{T}_{e^+}$. The assumptions about the subtitution rules imply that the origin is in the interior of this tile, and it can be subdivided according to the substitution rule $\mathcal{F}_{x_{-1}}$ into $|r^{-1}(v_t)|\geq 1$ tiles, where $v_t\in\mathcal{V}_0$ is the vertex identified with the tile $t$ containing the origin. The edge $e_{-1}$ corresponds to a choice of one of the smaller tiles which make up $t$. Now, $\mathcal{F}_{x_{-2}}$ gives a rule for subdividing this tile into $|r^{-1}(s(e_{-1}))|\geq 1$ smaller tiles and the edge $e_{-2}$ corresponds to choosing one of the smaller tiles in this subdivision. Carrying on recursively, after ending up with a small connected subset at level $-k$, the substitution rule $\mathcal{F}_{x_{-k-1}}$ yields a collection of smaller pieces which make up this connected subset and the edge $e_{-k-1}$ of $\bar{e}$ determines a choice of one of the smaller pieces. Since $S^-(\bar{e})\leq c$, on average, the pieces are contracting at a rate of $e^{-ck}$. Thus performing this procedure infinitely many times yields a unique point $p_{e^-}\in t$. The vector $\tau_{e^-}$ is now defined to be the unique vector which takes $p_{e^-}\in t \in \mathcal{T}_{e^+}$ to the origin. That is, the point $\varphi_{\tau_{e^-}}(p_{e}) = 0$. This assignment can readily be seen to be continuous.
\end{proof}
Let $\mathcal{B}_x$ be a Bratteli diagram determined by a family of substitution rules $\mathcal{F}_1,\dots, \mathcal{F}_N$ and a point $x\in X_\mathcal{F}$. There is a natural homeomorphism $h_x:X_{\mathcal{B}_x}\rightarrow X_{\mathcal{B}_{\sigma(x)}}$ defined by the shifting of indices in $X_{\mathcal{B}_x}$ by 1. This yields a homeomorphism of tiling spaces, which is proved in \cite[Proposition 6]{ST:random}.
\begin{proposition}
  \label{prop:renorm}
Let $\mathcal{F} = \{\mathcal{F}_1,\dots,\mathcal{F}_N\}$ be a family of uniformly expanding and compatible substitution rules and suppose that $\mathcal{B}_x$ is minimal. The shift $\sigma:X_\mathcal{F}\rightarrow X_\mathcal{F}$ induces a homeomorphism of tiling spaces $\Phi_x:\Omega_x\rightarrow \Omega_{\sigma(x)}$ satisfying $\Phi_x\circ \Delta_x = \Delta_{\sigma(x)}\circ h_x$. In addition, level-$k$ supertiles on $\mathcal{T}_{\bar{e}}\in \Omega_x$ are mapped to level-$k-1$ supertiles on $\Phi_x(\mathcal{T}_{\bar{e}}) = \mathcal{T}_{\sigma(\bar{e})}\in\Omega_{\sigma(x)}$.
\end{proposition}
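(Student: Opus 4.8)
The plan is to realize $\Phi_x$ explicitly as the operation that rescales a tiling of $\Omega_x$ by the contraction factor of $\mathcal{F}_{x_1}$ while forgetting the bottom level of its supertile hierarchy, and then to read the commutation relation and the supertile statement off the construction. Note first that the diagram $\mathcal{B}_{\sigma(x)}$ is nothing but $\mathcal{B}_x$ with its levels re-indexed by one, $\mathcal{V}^{\sigma(x)}_k=\mathcal{V}^x_{k+1}$ and $\mathcal{E}^{\sigma(x)}_k=\mathcal{E}^x_{k+1}$, so that the former bottom rule $\mathcal{F}_{x_1}$ of $\mathcal{B}_x$ becomes the first subdivision rule (the one at level $-1$) of $\mathcal{B}_{\sigma(x)}$; consequently the re-indexing map $h_x:X_{\mathcal{B}_x}\to X_{\mathcal{B}_{\sigma(x)}}$, $(h_x\bar e)_k=e_{k+1}$, is tautologically a homeomorphism carrying $\mathring X_{\mathcal{B}_x}$ onto $\mathring X_{\mathcal{B}_{\sigma(x)}}$.

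Next I would carry out the bookkeeping on the approximants (\ref{eqn:approx}). Writing $e_1$ for the first edge of $e^+=(e_1,e_2,\dots)$, telescoping $f_{\bar e|_{k+1}}=f_{e_{k+1}}\circ\cdots\circ f_{e_1}$ and applying the substitution identity (\ref{eqn:GIFS}) for $\mathcal{F}_{x_1}$ once to absorb the level-$1$ edges yields
\[
\mathcal{P}_k\big(h_x(e^+)\big)=f_{e_1}\big(\mathcal{P}_{k+1}(e^+)\big)
\]
as subsets of $\mathbb{R}^d$, with the refinement that the tiles of the left-hand patch are exactly the $f_{e_1}$-images of the level-$1$ supertiles of the right-hand patch. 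Summing over $k$ gives $\mathcal{T}_{h_x(e^+)}=f_{e_1}(\mathcal{T}_{e^+})$, and since $f_{e_1}(y)=\theta_{x_1}y+\tau_{e_1}$ this dictates the definition: $\Phi_x(\mathcal{T})$ is the tiling whose underlying region is $\theta_{x_1}\cdot\mathcal{T}$ and whose tiles are the level-$1$ supertiles of $\mathcal{T}$ scaled by $\theta_{x_1}$. The same telescoping applied to the origin-locating point $p_{e^-}$ built in the proof of the previous proposition gives $p_{(h_x\bar e)^-}=f_{e_1}(p_{e^-})$, whence $\Delta_{\sigma(x)}(h_x\bar e)=\theta_{x_1}\cdot\Delta_x(\bar e)=\Phi_x(\Delta_x\bar e)$, i.e.\ $\Phi_x\circ\Delta_x=\Delta_{\sigma(x)}\circ h_x$. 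Finally, a level-$k$ supertile of $\mathcal{T}$ is a prototile inflated by $(\theta_{x_1}\cdots\theta_{x_k})^{-1}$, hence is rescaled by $\theta_{x_1}$ to a prototile inflated by $(\theta_{x_2}\cdots\theta_{x_k})^{-1}$, which is precisely the inflation factor of a level-$(k-1)$ supertile in $\mathcal{B}_{\sigma(x)}$; this is the supertile assertion of the proposition.

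It remains to check that $\Phi_x$ is a well-defined homeomorphism $\Omega_x\to\Omega_{\sigma(x)}$. Well-definedness on all of $\Omega_x$ is exactly recognizability of the level-$1$ hierarchy, i.e.\ uniqueness of the level-$1$ supertile decomposition of each $\mathcal{T}\in\Omega_x$; this holds because $\mathcal{B}_x$ is minimal and the tilings are aperiodic of finite local complexity (it is part of what was worked out in \cite{ST:random}). If one prefers a self-contained route, the identity $\Delta_{\sigma(x)}\circ h_x=\Phi_x\circ\Delta_x$ on $\mathring X_{\mathcal{B}_x}$ shows that $h_x$ carries the fibres of $\Delta_x$ onto those of $\Delta_{\sigma(x)}$, so $\Phi_x$ descends to a well-defined map on the dense set $\Delta_x(\mathring X_{\mathcal{B}_x})$ and then extends by uniform continuity. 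Continuity is routine: $\theta_{x_1}\cdot\mathcal{T}$ on a ball of radius $R$ is determined by the level-$1$ supertile pattern of $\mathcal{T}$ on the ball of radius $R/\theta_{x_1}$, which varies continuously with $\mathcal{T}$ by the locality of recognizability. For the inverse, the prototiles of $\Omega_{\sigma(x)}$ are precisely those substituted by $\mathcal{F}_{x_1}$ (this is why $\mathcal{F}_{x_1}$ occupies level $-1$ of $\mathcal{B}_{\sigma(x)}$), so $\mathcal{T}'\mapsto\theta_{x_1}^{-1}\cdot\big(\mathcal{F}_{x_1}\text{ applied tile-by-tile to }\mathcal{T}'\big)$ is a continuous map $\Omega_{\sigma(x)}\to\Omega_x$ which, again by recognizability, is a two-sided inverse of $\Phi_x$; since the tiling spaces are compact, $\Phi_x$ is then a homeomorphism. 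The one genuinely delicate ingredient here is recognizability of the level-$1$ hierarchy; granting it (or running the fibre-matching alternative), the rest --- the telescoping identity, continuity, and the construction of the inverse --- is routine bookkeeping.
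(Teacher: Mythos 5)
The paper never proves this proposition in-house --- it is quoted verbatim from \cite[Proposition 6]{ST:random} --- so your argument is a reconstruction rather than a parallel of anything in the text. The computational core of your reconstruction is correct: the re-indexing description of $\mathcal{B}_{\sigma(x)}$ and of $h_x$, the telescoping identity $\mathcal{P}_k(h_x(e^+))=f_{e_1}(\mathcal{P}_{k+1}(e^+))$ obtained from one application of (\ref{eqn:GIFS}), the identity $p_{(h_x\bar e)^-}=f_{e_1}(p_{e^-})$ for the origin-locating point of the previous proposition, and the resulting cancellation of the translation part of $f_{e_1}$, so that $\Delta_{\sigma(x)}\circ h_x=\theta_{x_1}\cdot\Delta_x$ is a pure scaling and the intertwining plus the supertile statement drop out. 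This is the natural argument, and it is what one should expect the cited proposition to contain.

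The soft spot is exactly the one you flag, but neither of your two ways of handling it closes it within the stated hypotheses. First, recognizability of the level-$1$ hierarchy does not follow from ``minimal, uniformly expanding, compatible'': aperiodicity is not assumed in the proposition, and the paper's own framework contains non-recognizable instances --- the solenoids of \S\ref{sec:DK} are built from a single cubical prototile, the tilings $\mathcal{T}_{\bar e}$ are fully periodic, and a cube tiling admits $q_1^d$ distinct level-$1$ groupings, so unique composition genuinely fails there even though $\mathcal{B}_x$ is minimal and the tilings have finite local complexity. Second, the ``self-contained'' fibre-matching route is circular: to descend $\Phi_x$ to $\Delta_x(\mathring X_{\mathcal{B}_x})$ you need that $\Delta_x(\bar e)=\Delta_x(\bar e')$ forces $\Delta_{\sigma(x)}(h_x\bar e)=\Delta_{\sigma(x)}(h_x\bar e')$, and that is precisely the assertion that two hierarchical structures over the same tiling agree after deleting their first level --- i.e.\ a form of the recognizability you were trying to avoid (and in the solenoid example it is false: two paths over the same cube tiling can regroup it into blocks that scale down to different translates). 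So the honest formulation of your proof is conditional: granted unique level-$1$ composition, everything you wrote goes through and yields the homeomorphism, the relation $\Phi_x\circ\Delta_x=\Delta_{\sigma(x)}\circ h_x$, and the supertile statement; that conditional input is exactly what is being outsourced to \cite{ST:random}, and it is not supplied by citing aperiodicity, which is not among the proposition's hypotheses.
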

\section{Interlude: an example}
\label{sec:interlude}
Before proceeding to the second, more technical part of the paper, I will take the time to relate the example in Figure \ref{fig:HalfHexes} to the constructions of tilings from Bratteli diagrams in \S \ref{sec:brat}, and to the renormalization procedure in \S \ref{subsec:renorm}.

Figure \ref{fig:HalfHexes} illustrates part of two different substitution rules on six different prototiles, which are rotated copies of the half-hexagon prototile illustrated in Figure \ref{fig:HalfHexes} by $2\pi k/6$, $k = 1,\dots, 5$. The substitution rules for the rest of the prototiles in these cases are then defined by looking at the substitution for the prototile in Figure \ref{fig:HalfHexes} and rotating them by $2\pi k/6$, $k=1,\dots, 6$. The graphs for the corresponding graph iterated function systems which define the substitution rules as in (\ref{eqn:GIFS}) are illustrated in Figure \ref{fig:HalfHexesGIFS}.
      \begin{figure}[t]
        \centering
        \includegraphics[width = 6.25in]{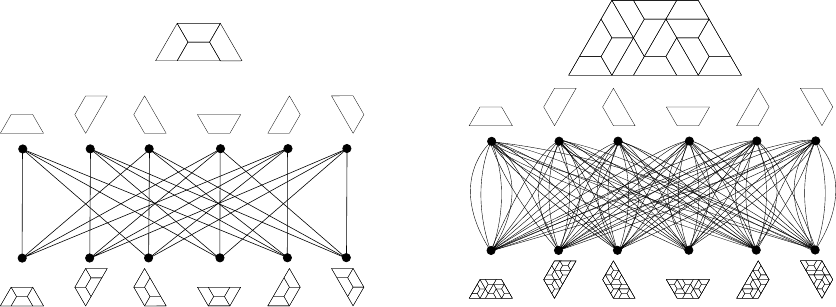}
        \caption{The associated graph iterated functions systems associated with the substitution rule in Figure \ref{fig:HalfHexes}.}
        \label{fig:HalfHexesGIFS}
      \end{figure}

      To connect this more concretely with the substitution rule expressed in (\ref{eqn:GIFS}), we take each prototile as a subset of $\mathbb{R}^2$ with its barycenter at the origin (note that this choice will define the canonical transversal). Each edge in the graphs of Figure \ref{fig:HalfHexesGIFS} corresponds to a contracting linear map from (\ref{eqn:GIFS}) which places a scaled copy of a prototile inside another prototile, and so we can express the subset of $\mathbb{R}^2$ corresponding to a prototile as the union of images of contracting linear maps, i.e., as in (\ref{eqn:GIFS}).

\begin{wrapfigure}{l}{.45\linewidth}
  \centering
  \includegraphics[width = 3in]{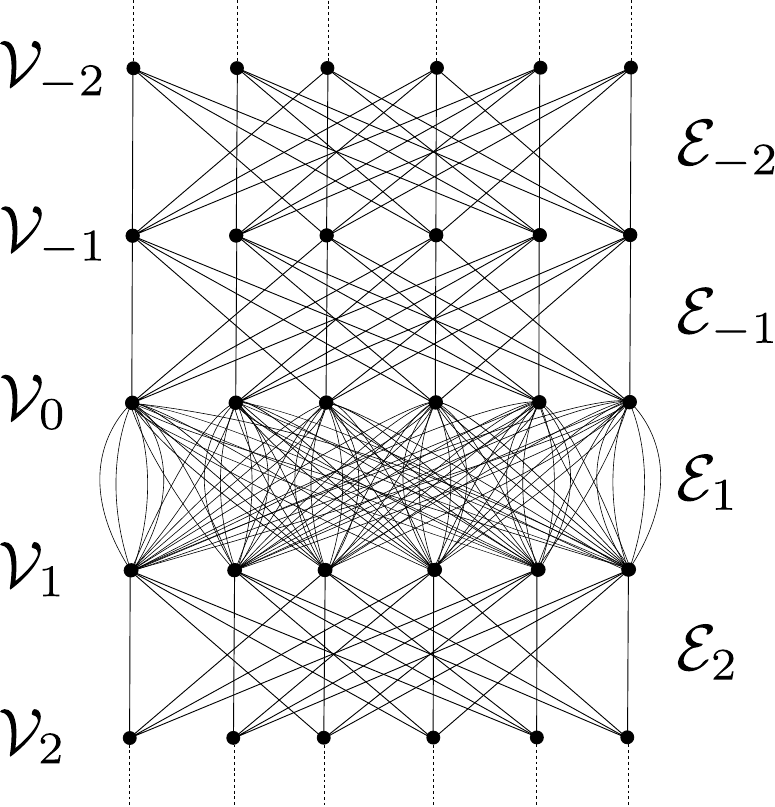}
  \caption{The Bratteli diagram $\mathcal{B}_x$ for any $x\in C([00.10])$ looks like this around $\mathcal{V}_0$.}
  \label{fig:HalfHexesBrat}
\end{wrapfigure}
      
Given that these two substitution rules are primitive, for any point $x\in\Sigma_2$, we obtain a minimal Bratteli diagram $\mathcal{B}_x$ where the edge information $\mathcal{E}_k$ is given by the graph on the left in Figure \ref{fig:HalfHexesGIFS} if $x_k = 0$, and otherwise by the graph on the right. Consider now a point $x\in \Sigma_2$ where $x = (\dots, x_{-2},x_{-1}.x_1,x_2,\dots) = (\dots, 0,0.1,0,\dots) \in C([00.10])$, where the dot $.$ denotes the break between the negative and positive parts, and $C([w])\subset \Sigma_2$ denotes the obvious cylinder set in $\Sigma_2$ defined by the word $w$ defined for a specific set of indices. Figure \ref{fig:HalfHexesBrat} illustrates the common part of a Bratteli diagram $\mathcal{B}_x$ for any $x\in C([00.10])$.

Consider now the positive part $\mathcal{B}_x^+$ of the Bratteli diagram $\mathcal{B}_x$ for $x\in C([00.10])$, and its associated path space $X_{\mathcal{B}_x}^+$, and consider a path $\bar{e}^+ = (e_1,e_2,\dots)\in X_{\mathcal{B}_x}^+$, the first two edges of which are outlined in bold blue in the left part of Figure \ref{fig:path}. This path defines both a cylinder set $C((e_1,e_2))\subset X_{\mathcal{B}_x}^+$ as in (\ref{eqn:cylinderBrat}), as well as a second approximant $\mathcal{P}_2(\bar{e})$ as in (\ref{eqn:approx}), which is denoted on the right part of Figure \ref{fig:path}. As such, it also denoted cylinder sets $\mathcal{C}_{\mathcal{P}_2(\bar{e}^+),b}$, where $b$ is the distinguished point corresponding to the barrycenter of the tile. In fact, using the continuous map $\bar{\Delta}_x$ from Corollary \ref{cor:Robinson}, it follows that $\bar{\Delta}_x\left(C((e_1,e_2))\right) = \mathcal{C}_{\mathcal{P}_2(\bar{e}^+),b}$.

\begin{figure}[h]
  \centering
  \includegraphics[width = 5.5in]{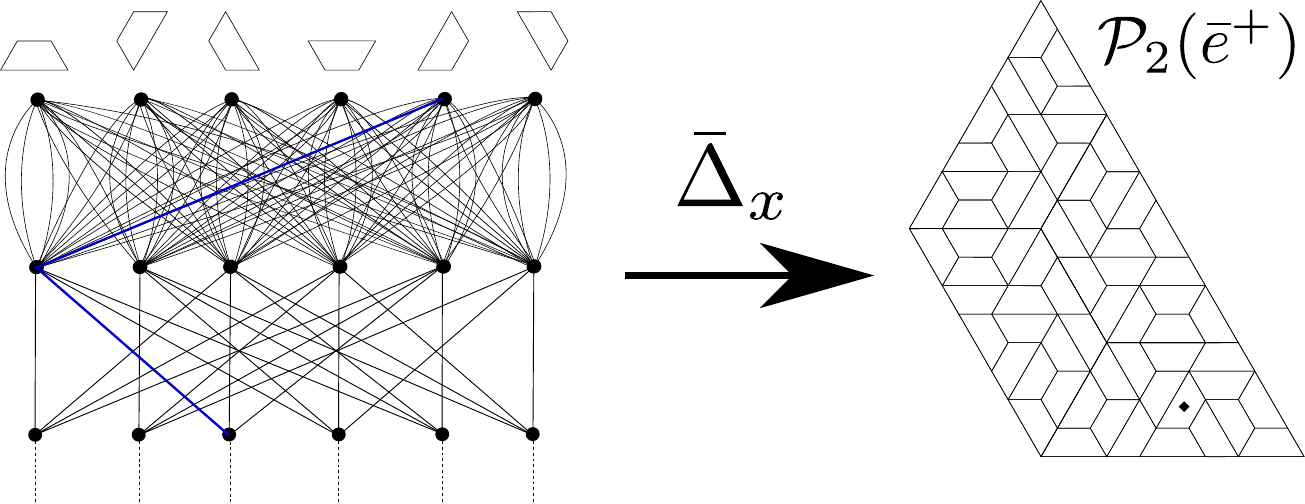}
  \caption{Mapping a cylinder set of the positive part to a cylinder set on the canonical transversal.}
  \label{fig:path}
\end{figure}

Now onto the negative part $\mathcal{B}_x^-$. A finite path $\bar{e}^- = (e_{-\ell},\dots, e_{-1})$ on $\mathcal{B}_x^-$ defines a cylinder set $C(\bar{e}^-)\subset X_{\mathcal{B}_x}^-$, as well as a measurable subset of the prototile associated with the vertex $r(\bar{e}^-)\in\mathcal{V}_0$. This subset is precisely $f_{e_{-1}}\circ \cdots \circ f_{e_{-\ell}}(t_{s(\bar{e}^-)})\subset t_{r(\bar{e}^-)}$, where $f_{-i}$ is the contracting map associated with the edge $e_{-i}\in\mathcal{E}_{-i}$, and $t_{r(e)}$ is the prototile corresponding to the vertex $r(e)\in\mathcal{V}_0$. As such, the blue path $\bar{e}^- = (e_{-1},e_{-2})$ denoted in bold blue in Figure \ref{fig:negative} defines a cylinder set $C(\bar{e}^-)\subset X_{\mathcal{B}_x}^-$ and, on the right, the associated subset denoted in blue on the tile.

\begin{figure}[h]
  \centering
  \includegraphics[width = 4.5in]{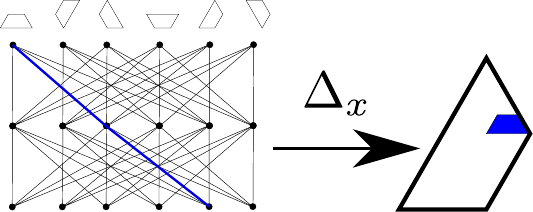}
  \caption{Mapping a cylinder set of the negative part to a ``cylinder set'' of a prototile.}
  \label{fig:negative}
\end{figure}

Putting Figures \ref{fig:path} and \ref{fig:negative} together one obtains a path $\bar{e}' = (e_{-2},e_{-1},e_1,e_2)$ which defines a cylinder set $C(\bar{e}')\subset X_{\mathcal{B}_x}\subset X_{\mathcal{B}_x}^-\times X_{\mathcal{B}_x}^+$. The image of this cylinder set under the map $\Delta_x:\mathring{X}_{\mathcal{B}_x}\rightarrow \Omega_x$ from Proposition \ref{prop:Robinson2} is a cylinder set in $\Omega_x$, althought not of the canonical form as in (\ref{eqn:openSet}). In any case, the cylinder set $\Delta_x(C(\bar{e}'))$ is described as all the tilings in $\Omega_x$ having a patch around the origin which is a translation copy of $\mathcal{P}_2(\bar{e}^+)$ in Figure \ref{fig:path}, and where the origin is somewhere in the blue region of Figure \ref{fig:negative}. This is illustrated in Figure \ref{fig:full}.

\begin{figure}[h]
  \centering
  \includegraphics[width = 6in]{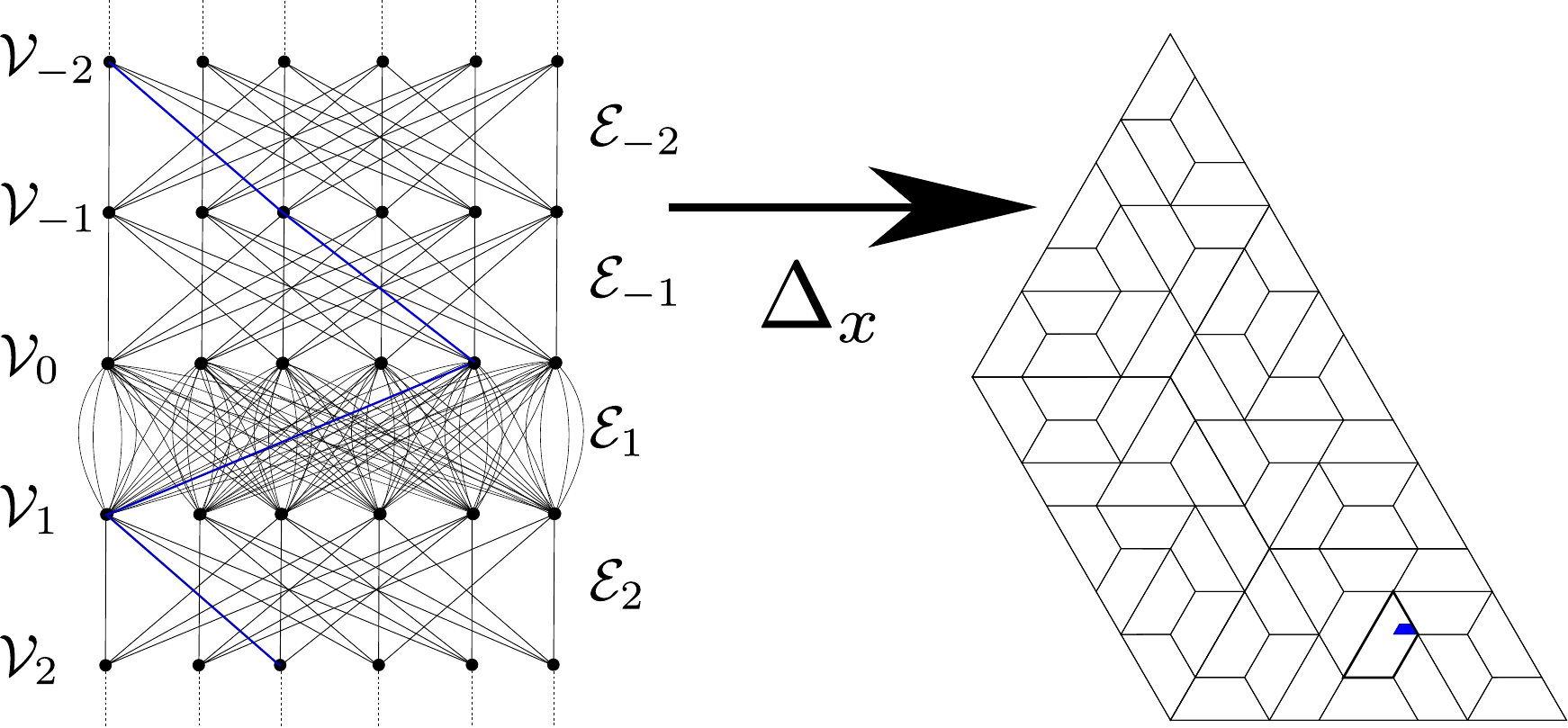}
  \caption{Mapping a cylinder set to a cylinder set.}
  \label{fig:full}
\end{figure}

It remains to illustrate how renormalization works in this example. As described in \S \ref{subsec:renorm}, renormalization is driven by the shift $\sigma:\Sigma_2\rightarrow \Sigma_2$. Figure \ref{fig:equivariance} illustrates what a step of renormalization does to the cylinder set in Figure \ref{fig:full}. The illustration uses the inverse of the shift, as it shows the relationship between renormalization and the substitutions encoded in $\mathcal{B}_x$.

\begin{figure}[h]
  \centering
  \includegraphics[width = 5.5in]{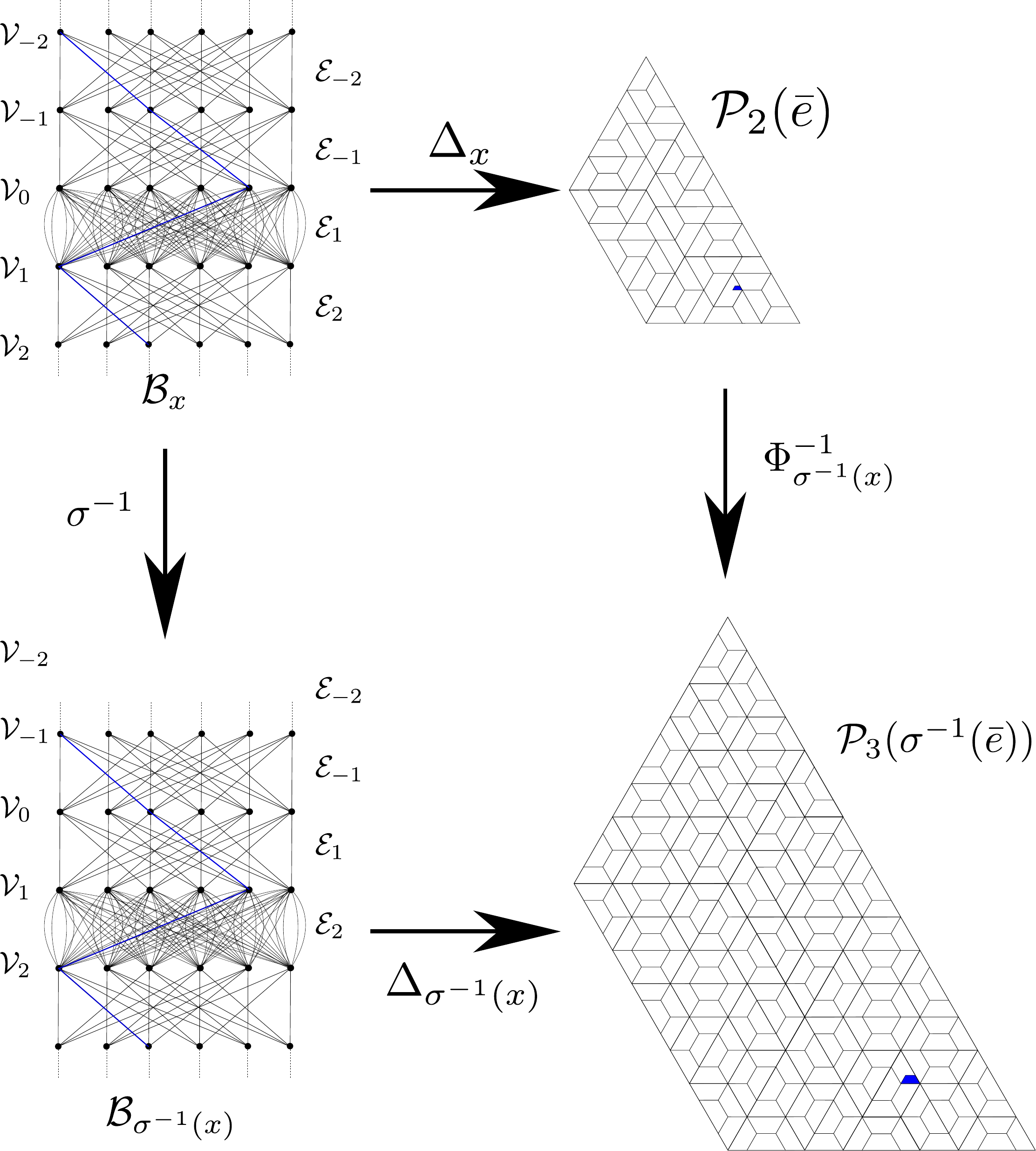}
  \caption{The mechanism of renormalization: the process of applying the inverse $\sigma^{-1}$ of the shift corresponds to applying the substitution and expansion rule defined by the edge set $\mathcal{E}_{-1}$ on $\mathcal{B}_x$. This shifts levels on $\mathcal{B}_x$ to obtain $\mathcal{B}_{\sigma^{-1}(x)}$ and maps level-$k$ supertiles to level-$(k+1)$, as shown with the second approximant supertile from Figure \ref{fig:path}. At the level of cylinder sets defined by the finite path in bold from Figure \ref{fig:full}, we have that $\Phi_{\sigma^{-1}}^{-1}(\Delta_x(C(\bar{e}))) = \Delta_{\sigma^{-1}(x)}(C(\sigma^{-1}(\bar{e})))$.}
  \label{fig:equivariance}
\end{figure}
\section{LF Algebras and traces}
\label{sec:LF}
A \textbf{multimatrix algebra} is a $*$-algebra of the form
$$\mathcal{M} = M_{\ell_1}\oplus \cdots \oplus M_{\ell_n},$$
where $M_\ell$ denotes the algebra of $\ell\times\ell$ matrices over $\mathbb{C}$. Let $\mathcal{M}_1 = M_{\ell_{1,1}}\oplus \cdots \oplus M_{\ell_{n,1}}$ and $\mathcal{M}_2 = M_{\ell_{1,2}}\oplus \cdots \oplus M_{\ell_{n,2}}$ be multi-matrix algebras and suppose $\phi:\mathcal{M}_1\rightarrow \mathcal{M}_2$ is a unital homomorphism of $\mathcal{M}_1$ into $\mathcal{M}_2$. Then $\phi$ is determined up to unitary equivalence in $\mathcal{M}_2$ by a $\ell_{n,2}\times \ell_{n,1}$ non-negative integer matrix $A_\phi$ \cite[\S III.2]{davidson:book}. It follows that the inclusion of a multi-matrix algebra $\mathcal{M}_0$ into a larger multimatrix algebra $\mathcal{M}_1$ is determined up to unitary equivalence by a matrix $A_0$ which roughly states how many copies of a particular subalgebra of $\mathcal{M}_0$ goes into a particular subalgebra of $\mathcal{M}_1$.

Let $\mathcal{B}$ be a Bratteli diagram and let $A_k^+$, $k\in\mathbb{N}$, be the connectivity matrix at level $k$. In other words, $A_k(i,j)^+$ is the number of edges going from $v_j\in \mathcal{V}_{k-1}$ to $v_i\in\mathcal{V}_k$. An analogous matrix $A^-_k$ can be defined for $k<0$. Starting with $\mathcal{M}_0 = \mathbb{C}^{|\mathcal{V}_0|}$ the matrices $A_k^\pm$ define two families of inclusions $i_{|k|}^\pm:\mathcal{M}_{|k|-1}^\pm\rightarrow \mathcal{M}_{|k|}^\pm$ (up to unitary equivalence), one for $+$ and one for $-$, where each $\mathcal{M}_k^\pm$ is a multimatrix algebra. More explicitly, if
$$\mathcal{M}_k^\pm = M_{n_1^\pm}\oplus \cdots \oplus M_{n_k^\pm}$$
then starting with the vector $h^0=(1,\dots, 1)^T\in \mathbb{C}^{|\mathcal{V}_0|}$ and defining $h^{k,+} = A_k^+h^{k-1,+} = A_k^+\cdots A_{1}^+ (h^0)^T$ for $k\geq 0$ and $h^{k,-} =  h^{k-1,-}A_k^- =  h^0 A_1^-\cdots A_{k}^- $ for $k\leq 0$, we have that
$$\mathcal{M}_k^+ = M_{h_1^{k,+}}\oplus \cdots \oplus M_{h_{n_k}^{k,+}}\hspace{.5in}\mbox{ and }\hspace{.5in}\mathcal{M}_k^- = M_{h_1^{k,-}}\oplus \cdots \oplus M_{h_{n_k}^{k,-}}$$
and the inclusions $i_{|k|}^\pm:\mathcal{M}_{|k|-1}^\pm\rightarrow \mathcal{M}_{|k|}^\pm$ are defined up to unitary equivalence by the matrices $A_k^\pm$. With these systems of inclusions one can define the inductive limits
\begin{equation}
 LF(\mathcal{B}^+) := \bigcup_k\mathcal{M}_k^+ = \lim_{\rightarrow}(\mathcal{M}_k^+,i_k^+) \hspace{.7in}  LF(\mathcal{B}^-) := \bigcup_k\mathcal{M}_k^- = \lim_{\rightarrow}(\mathcal{M}_k^-,i_k^-)
\end{equation}
which are $*$-algebras called the \textbf{locally finite (LF) algebras} defined by $\mathcal{B}$. Their $C^*$-completion
$$AF(\mathcal{B}^+) := \overline{LF(\mathcal{B}^+)},\hspace{.8in AF(\mathcal{B}^-) := \overline{LF(\mathcal{B}^-)}}$$
are the \textbf{approximately finite-dimensional (AF) algebras} defined by $\mathcal{B}$.
\begin{definition}
  A \textbf{trace} on a $*$-algebra $\mathcal{A}$ is a linear functional $\tau:\mathcal{A}\rightarrow \mathbb{C}$ which satisfies $\tau(ab) = \tau(ba)$ for all $a,b\in\mathcal{A}$\footnote{There is no assumption that traces are positive (that is, $\tau(aa^*)>0$).}. The set of all traces of $\mathcal{A}$ forms a vector space over $\mathbb{C}$ and it is denoted by $\tr(\mathcal{A})$. A \textbf{cotrace} $\tau'$ is an element of the dual vector space $\tr^*(\mathcal{A}):=\tr(\mathcal{A})^*$.
  \end{definition}
For $M_\ell$, the algebra of $\ell\times\ell$ matrices, $\tr(M_\ell)$ is one-dimensional and generated by the trace $\tau_\ell: a\mapsto \sum_{i=1}^\ell a_{ii}$. For a multimatrix algebra $\mathcal{M} = M_{\ell_1}\oplus\cdots\oplus M_{\ell_n}$, the dimension of $\tr(\mathcal{M})$ is $n$ and is generated by the traces $\tau_{\ell_i}\in \tr(M_{\ell_i})$ for $i=1,\dots, n$.

Let $i_k^+:\mathcal{M}_{k-1}^+\rightarrow \mathcal{M}_k^+$ be the family of inclusions defined by the positive part of a Bratteli diagram $\mathcal{B}^+$. Then there is a dual family of inclusions $i_k^*:\tr(\mathcal{M}_k^+)\rightarrow \tr(\mathcal{M}_{k-1}^+)$ (and an analogous family $i_k^*:\tr(\mathcal{M}_k^-)\rightarrow \tr(\mathcal{M}_{k-1}^-)$). The trace spaces of the LF algebras defined by a Bratteli diagram $\mathcal{B}$ are then the inverse limits
\begin{equation}
  \begin{split}
    \tr(\mathcal{B}^+) &:= \tr(LF(\mathcal{B}^+)) = \lim_{\leftarrow} (i^*_k,\tr(\mathcal{M}^+_k)) \\
    \tr(\mathcal{B}^-) &:= \tr(LF(\mathcal{B}^-)) = \lim_{\leftarrow} (i^*_k,\tr(\mathcal{M}^-_k))
    \end{split}
  \end{equation}
which are vector spaces. The respective spaces of cotraces are then
$$\tr^*(\mathcal{B}^+)  = \lim_{\rightarrow} ((i^*_k)^*,\tr^*(\mathcal{M}^+_k))\hspace{.4in}\mbox{ and } \hspace{.4in} \tr^*(\mathcal{B}^-)  = \lim_{\rightarrow} ((i^*_k)^*,\tr^*(\mathcal{M}^-_k)).$$
\begin{remark}
Note that since every class $[p]$ of the dimension group $K_0(AF(\mathcal{B}^+))$ can be represented by an element $p\in LF(\mathcal{B}^+)$, the set $\tr(\mathcal{B}^+)$ also defines the dual space $\tr(K_0(AF(\mathcal{B}^+))) := K_0(AF(\mathcal{B}^+))'$. As such, the trace spaces which will be used can be thought of as the dual of the invariant $K_0(AF(\mathcal{B}^+))$.
\end{remark}

Let $\{\mathcal{B}_x\}$ be a family of Bratteli diagrams parametrized by $x\in X\subset \Sigma_N$, where $X$ is a closed, $\sigma$-invariant subset of $\Sigma_N$ (an example of this is $X_\mathcal{F}$, where $\mathcal{F}$ is a family of substitutions on $N$ tiles, as described in \S \ref{subsec:renorm}).  In what follows, we will focus on the invariants defined by the positive part of $\mathcal{B}_x$, so we will drop the $+$ superscripts used earlier. The shift induces a $*$-homomorphism $\sigma_*:\mathcal{M}_0^x\rightarrow \mathcal{M}_0^{\sigma(x)}$ as follows. For $a = (a_1,\dots a_{|\mathcal{V}_0^+|})\in\mathcal{M}_0^x$ consider its image $i^x_1a = ((i^x_1a)_1,\dots, (i^x_1a)_{|\mathcal{V}_1^+|})\in \mathcal{M}_1^x$. Composing this with the evaluation by $\mathcal{T}_1^x $ which takes $a = (a_1,\dots, a_{|\mathcal{V}_1|})\in \mathcal{M}_1^x$ to $\mathcal{T}_1^x(a) = (\tau_1(a_1), \dots, \tau_{|\mathcal{V}_1^+|}(a_{|\mathcal{V}_1^+|}))\in\mathcal{M}_0^{\sigma(x)}$, we obtain the map
\begin{equation*}
  \label{eqn:traces!}
\begin{split}
  \sigma_* = \mathcal{T}_1^x\circ i^x_1 : a\mapsto \left(\tau_1(i^x_1a),\dots, \tau_{|\mathcal{V}_1^+|}(i^x_1a) \right) &= \left( \sum_j a_j A_1(1,j), \dots, \sum_ja_j A_1(|\mathcal{V}_1|,j)\right) \\
   &= A_1(a_1,\dots, a_{|\mathcal{V}_0|})^T \in \mathcal{M}_0^{\sigma(x)} = \mathbb{C}^{|\mathcal{V}_1|}.
\end{split}
\end{equation*}
  As such, the map $\sigma_*: \mathcal{M}_0^{x} = \mathbb{C}^{|\mathcal{V}_0|} \rightarrow \mathbb{C}^{|\mathcal{V}_1|} = \mathcal{M}_0^{\sigma(x)} $ coincides with the linear map $A_1:\mathbb{C}^{|\mathcal{V}_0|}\rightarrow \mathbb{C}^{|\mathcal{V}_1|}$ defined by the first matrix of the Bratteli diagram. As such there is a dual map $\sigma^*:\tr(\mathcal{M}_0^{\sigma(x)})\rightarrow \tr(\mathcal{M}_0^{x})$ and so we have the isomorphisms
  \begin{equation}
    \label{eqn:traceSpaces}
\tr(\mathcal{B}^+_x) \cong \lim_{\leftarrow}\left( \tr\left(\mathcal{M}_0^{\sigma^k(x)}\right),\sigma^* \right)\hspace{.4in}\mbox{ and } \hspace{.4in}\tr^*(\mathcal{B}^+_x) \cong \lim_{\rightarrow}\left( \tr^*\left(\mathcal{M}_0^{\sigma^k(x)}\right),\sigma_* \right).
  \end{equation}

  Now consider the composition $\sigma_*\circ \sigma_* = \mathcal{T}^{\sigma(x)}_1\circ i^{\sigma(x)}_x\circ \mathcal{T}^x_1\circ i^x_1 :\mathcal{M}_0^x\rightarrow \mathcal{M}_0^{\sigma^2(x)}$. Since both $\tr(\mathcal{M}_1^{\sigma(x)})$ and $\tr(\mathcal{M}_2^x)$ are isomorphic to $\mathbb{C}^{|\mathcal{V}_2^+|}$ and there is a canonical correspondence between their bases $\{\tau_1,\dots, \tau_{|\mathcal{V}^+_2|}\}$ and $\{\tau_1',\dots, \tau'_{|\mathcal{V}^+_2|}\}$, respectively, we have that
$$\tau_\ell\left(i^{\sigma(x)}_1\left(\mathcal{T}^x_1\circ i^x_1(a)\right)\right) = \tau'_\ell\left(i^{x}_2  i^x_1(a)\right)$$
  for all $\ell\in \{1,\dots, |\mathcal{V}^+_2|\}$. So we can now write the composition in detail:
  \begin{equation*}
    \begin{split}
      \mathcal{T}^{\sigma(x)}_1\circ i^{\sigma(x)}_x\circ \mathcal{T}^x_1\circ i^x_1(a) &= \left(\tau_1\left(i^{\sigma(x)}_1\left(\mathcal{T}^x_1\circ i^x_1(a)\right)\right),\dots, \tau_{|\mathcal{V}^+_2|}\left(i^{\sigma(x)}_1\left(\mathcal{T}^x_1\circ i^x_1(a)\right)\right)\right) \\
      &= \left(\tau_1\left(i^{x}_2 i^x_1(a)\right),\dots, \tau_{|\mathcal{V}^+_2|}\left(i^{x}_2 i^x_1(a)\right)\right) \\
      &= \left(\left(i^{x}_2 i^x_1\right)^*\tau_1(a),\dots, \left(i^{x}_2 i^x_1\right)^*\tau_{|\mathcal{V}^+_2|}(a)\right) \\
      &= \left(\left(A_2A_1\right)^*\tau_1(a),\dots, \left(A_2 A_1\right)^*\tau_{|\mathcal{V}^+_2|}(a)\right),
    \end{split}
  \end{equation*}
where we have abused notation slightly in using $\tau_\ell$ to denote both the $\ell^{th}$ canonical trace in $\tr(\mathcal{M}_2^x)$ and the one in $\tr(\mathcal{M}_1^{\sigma(x)})$. This immediately generalizes to
  \begin{equation}
    \label{eqn:cocycle!}
\begin{split}
  \sigma_*^{(k)}:=\sigma_*\circ \cdots\circ  \sigma_*&:\mathcal{M}_0^x\rightarrow \mathcal{M}_0^{\sigma^k(x)} \hspace{.3in} \mbox{ defined by }\\
 a\mapsto & \left(\left(A_k\cdots A_1\right)^*\tau_1(a),\dots, \left(A_k\cdots A_1\right)^*\tau_{|\mathcal{V}^+_k|}(a)\right).
  \end{split}
  \end{equation}

  \subsection{The trace cocycle}
     \label{subsec:TrCocycle}
     Let $\mathcal{F}$ be a family of substitution rules on the set of prototiles $t_1,\dots, t_M$ and let $X_\mathcal{F}$ be the subshift that it defines.
\begin{definition}
  The \textbf{trace bundle} $p:\tr(\mathcal{F})\rightarrow X_\mathcal{F}$ is the bundle over $X_\mathcal{F}$ where $p^{-1}(x) = \tr(\mathcal{M}^x_0)$ for all $x\in X_\mathcal{F}$. The \textbf{cotrace bundle} $q:\tr^*(\mathcal{F})\rightarrow X_\mathcal{F}$ is the dual of the trace bundle, where $q^{-1}(x) = \tr^*(\mathcal{M}^x_0)$ for all $x\in X_\mathcal{F}$.
\end{definition}
\begin{definition}
The \textbf{trace cocycle} is the bundle map $\Theta:\tr^*(\mathcal{F})\rightarrow \tr^*(\mathcal{F})$ defined by $\Theta_x:(x,\tau')\mapsto (\sigma(x),\sigma_*(\tau'))$ for all $x\in X_\mathcal{F}$, $\tau'\in \tr^*(\mathcal{M}^x_0)$.
\end{definition}
Since $\tr^*(\mathcal{M}_0^x)$ is a finite dimensional vector space we endow it with a norm $\|\cdot \|$. Note that for all $y\in X_\mathcal{F}$ close enough to $x$ we will have $\tr^*(\mathcal{M}_0^x) = \tr^*(\mathcal{M}_0^y)$ and thus all these spaces inherit the same norm. With a norm in every space $\tr(\mathcal{M}_0^x)$, we now appeal to Oseledets theorem. Let $\|\cdot \|_{op}$ be the operator norm. Since the maps $\sigma_*$ can be singular but the base transformation $\sigma:X_\mathcal{F}\rightarrow X_\mathcal{F}$ invertible, we can appeal to the semi-invertible Oseledets theorem \cite{FLQ:semiOseledets} and obtain a decomposition of the trace spaces which is invariant under the dynamics.
\begin{theorem}[Semi-invertible Oseledets theorem \cite{FLQ:semiOseledets}]
  Let $\mathcal{F}$ be a family of substitution rules on $t_1,\dots, t_M$ tiles and $\mu$ a minimal and $\sigma$-invariant Borel ergodic probability measure on $X_\mathcal{F}$. Suppose that $\log^+ \|\sigma_*\|_{op}\in L^1_\mu$. Then there exist numbers $\lambda_1^\pm\geq \lambda_2^\pm\geq\cdots\geq \lambda_{r^\pm}^\pm$, where $\lambda^+_i> 0$ and $\lambda^-_i\leq 0$, such that for $\mu$-almost every $x$ there is a measurable, $\sigma_*$-invariant family of subspaces $V^\pm_j(x),V^\infty(x)\subset \tr^*(\mathcal{M}_0^{x})$ :
  \begin{enumerate}
  \item We have $\tr^*(\mathcal{B}^+_x) = E^+_x\oplus E^-_x$ where
    $$E^\pm_x = \bigoplus_{i=1}^{r^\pm} V^\pm_i(x), \,\,\,\,\,   \,\,\,\,\,\mathrm{ and }\,\,\,\,\, \tr^*(\mathcal{M}^{x}_0)  = \tr^*(\mathcal{B}^+_x) \oplus V^\infty(x),$$
  \item $\sigma_* V_j^\pm(x) = V_j^\pm(\sigma(x))$ and $\sigma_*V^\infty(x)\subset V^\infty(\sigma(x))$,
  \item For any $v^\pm\in V^\pm_i(x)$ and $v_0\in V^\infty(x)$ we have that
    $$ \lim_{n\rightarrow \infty}\frac{\log \| \sigma^{(n)}_* v^\pm\|}{n} = \lambda^\pm_i\,\,\,\,\mbox{ and }\,\,\,\,\lim_{n\rightarrow \infty}\frac{\log \|  \sigma^{(n)}_* v_0\|}{n} = -\infty.$$
  \end{enumerate}
\end{theorem}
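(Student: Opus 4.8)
The statement is essentially the semi-invertible multiplicative ergodic theorem of \cite{FLQ:semiOseledets} transported to the present setting, so the plan is to verify its hypotheses for the cocycle generated by $\sigma_*$ over the invertible ergodic system $(X_\mathcal{F},\sigma,\mu)$ and then to read off its conclusion in the language of trace spaces. The fiber over $x$ is $\tr^*(\mathcal{M}_0^x)$, which by \eqref{eqn:cocycle!} is $\mathbb{C}^{|\mathcal{V}_0^x|}$ with $|\mathcal{V}_0^x|$ the number of tiles used by $\mathcal{F}_{x_1}$; since there are finitely many substitution rules and at most $M$ prototiles, this dimension takes only finitely many values, all $\leq M$. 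Embedding each fiber into a fixed $\mathbb{C}^M$ (or, equivalently, using the measurable vector-bundle version of the theorem) we are in the situation of a cocycle of complex matrices of uniformly bounded size, over an invertible ergodic base, with a possibly singular generator --- exactly the hypothesis class of \cite{FLQ:semiOseledets}.

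First I would check the regularity hypotheses. By \eqref{eqn:cocycle!} the generator at $x$ is the nonnegative integer matrix $A_1^x$, essentially the substitution matrix of $\mathcal{F}_{x_1}$; it is a locally constant, in particular Borel measurable, function of $x$ taking only finitely many values, so $\|\sigma_*\|_{op}$ is uniformly bounded on $X_\mathcal{F}$ and the integrability requirement $\log^+\|\sigma_*\|_{op}\in L^1_\mu$ is automatic. Then \cite{FLQ:semiOseledets} produces, for $\mu$-a.e.\ $x$, finitely many exponents together with a measurable, $\sigma_*$-equivariant splitting of $\tr^*(\mathcal{M}_0^x)$ into Oseledets subspaces of well-defined finite exponent, plus a complementary equivariant ``fast'' subspace on which $\tfrac1n\log\|\sigma_*^{(n)}\cdot\|\to-\infty$; this last family is what the statement calls $V^\infty(x)$, and it automatically contains $\bigcup_{n\geq1}\ker\sigma_*^{(n)}$. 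Items (ii) and (iii), together with the asserted measurability and equivariance, are then exactly what \cite{FLQ:semiOseledets} gives.

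What remains is the identification of $\tr^*(\mathcal{B}^+_x)$ with the sum of the finite-exponent Oseledets subspaces. By \eqref{eqn:traceSpaces}, $\tr^*(\mathcal{B}^+_x)=\lim_{\rightarrow}(\tr^*(\mathcal{M}_0^{\sigma^k x}),\sigma_*)$, so the canonical map $\tr^*(\mathcal{M}_0^x)\to\tr^*(\mathcal{B}^+_x)$ has kernel precisely $\bigcup_{n\geq1}\ker\sigma_*^{(n)}$. The point is that this kernel equals $V^\infty(x)$ for $\mu$-a.e.\ $x$: the ranks of the products $A_n\cdots A_1$ stabilize almost surely --- here almost-sure minimality of the diagrams and the Poincar\'{e} recurrence argument from the Proposition preceding \S\ref{subsec:renorm} enter, through the positive return-time matrix $M_x$ --- so the eventual range of $\sigma_*^{(n)}$ is a well-defined finite-dimensional equivariant subspace on which the cocycle is injective at every step, forcing all of its exponents to be finite, while anything outside the corresponding complement is killed in finitely many steps. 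This is \cite[Proposition 8]{ST:random} applied to the cotrace cocycle, combined with the duality between $\tr^*(\mathcal{B}^+_x)$ and $\tr(\mathcal{B}^+_x)$. Hence $\tr^*(\mathcal{B}^+_x)$, viewed inside $\tr^*(\mathcal{M}_0^x)$, is the direct sum of the finite-exponent Oseledets spaces; placing those of strictly positive exponent into $E^+_x$ and those of nonpositive exponent into $E^-_x$ yields item (i). Finally, since the path-counting growth rate $\lambda_\mu$ of the Proposition preceding \S\ref{subsec:renorm} is exactly the top exponent of $\sigma_*^{(n)}$ and is positive there, we get $\lambda^+_1>0$, so $r^+\geq1$.

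The main obstacle is this last identification --- matching the algebraically defined cotrace space $\tr^*(\mathcal{B}^+_x)$ with the dynamically defined slow part of the Oseledets decomposition --- and its delicate input is the almost-sure stabilization of the ranks of $A_n\cdots A_1$, which rests on ergodicity together with almost-sure minimality of the diagrams and is what is imported from \cite{ST:random}. Once that is in hand, the assertion that no direction of finite exponent lies inside $V^\infty(x)$ is automatic, since on a fixed complement of the eventual kernel the cocycle is invertible onto its image; everything else is bookkeeping around the black-box theorem \cite{FLQ:semiOseledets}.
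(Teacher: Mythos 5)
Your proposal is correct and follows essentially the same route as the paper: the paper does not prove this statement but imports it wholesale from \cite{FLQ:semiOseledets}, applied to the cocycle $\sigma_*$ over the invertible ergodic base $(X_\mathcal{F},\sigma,\mu)$, and then records exactly the identification you make in the remark following the theorem, namely that via (\ref{eqn:traceSpaces}) the cotrace space $\tr^*(\mathcal{B}^+_x)$ is realized inside $\tr^*(\mathcal{M}_0^x)$ as the complement of the eventual kernel, i.e.\ as the sum of the finite-exponent Oseledets subspaces. Your additional checks (boundedness of the generator, rank stabilization via recurrence and minimality as in \cite{ST:random}) are consistent with, and slightly more detailed than, what the paper spells out.
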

The collection of numbers $\lambda^\pm_i$ associated to the measure $\mu$ are the \textbf{Lyapunov exponents} of $\mu$. The set of all exponents is the \textbf{Lyapunov spectrum of $\mu$}. Given an inaviant measure $\mu$ satisfying the hypotheses of Oseledets theorem, an \textbf{Oseledets-generic} or \textbf{Oseledets-typical} point is a point $x$ for which the conclusions of the theorem hold.

In (i) of the above theorem we have made the indentification of the cotrace space $\tr^*(\mathcal{B}^+_x)$ with subspace of $\tr^*(\mathcal{M}_0^x)$ which consists of vectors which are not in the kernel of $\sigma_*^{(k)}$ for all $k>0$. This is justified by (\ref{eqn:traceSpaces}). Thus the restriction of $\sigma_*$ to $\tr^*(\mathcal{B}^+_x)$ is the linear map on the cotrace space induced by the shift $\sigma$. There is an analogous, dual, invariant decomposition of $\tr(\mathcal{B}^+_x)$ as $\tr(\mathcal{B}^+_x) = T^+_x\oplus T^-_x$ where
    $$T^\pm_x = \bigoplus_{i=1}^{r^\pm} T^\pm_i(x), \,\,\,\,\,   \,\,\,\,\,\mathrm{ and }\,\,\,\,\, \tr(\mathcal{M}^{x}_0)  = \tr(\mathcal{B}^+_x) \oplus T^\infty(x),$$  

The rest of this section is devoted to defining, for Oseledets-typical points $x\in\Sigma_N$, a map $j_x^+:\tr^*(\mathcal{B}_x^+)\rightarrow LF(\mathcal{B}_x^+)$ and deducing its equivariant properties with respect to the renormalization dynamics, that is, with respect to the shift map $\sigma:\Sigma_N\rightarrow \Sigma_N$, which is given by (\ref{eqn:equivariance2}). These properties will be used in \S \ref{sec:erg} in the study of ergodic integrals.

Denote by $\{\tau_1,\dots, \tau_{|\mathcal{V}_0^+|}\}$ the standard basis of $\tr(\mathcal{M}_0^{x})$ and by $\{\delta_1,\dots, \delta_{|\mathcal{V}_0^+|}\}$ the dual basis for $\tr^*(\mathcal{M}_0^{x})$. Oseledets theorem above gives a canonical identification of $\tr^*(\mathcal{B}^+_x)$ with a subspace of $ \tr^*(\mathcal{M}_0^{x})$, so any cotrace in $\tr^*(\mathcal{B}^+_x)$ can be written as
\begin{equation}
\label{eqn:cotraceExp}
\tau^* = \sum_{i=1}^{|\mathcal{V}_0^+|} \beta_{i}(\tau^*)\delta_{i}\in \tr^*(\mathcal{B}^+_x)\subset \tr^*(\mathcal{M}_0^{x}).
\end{equation}
We now define a map
\begin{equation}
  \label{eqn:jMap}
  j_x^+:\tr^*(\mathcal{B}^+_x)\longrightarrow LF(\mathcal{B}^+_x)\subset AF(\mathcal{B}^+_x)
\end{equation}
as follows. For $\tau^*\in \tr^*(\mathcal{B}^+_x)$, the image $[a_{\tau^*}] = [j^+_x(\tau^*)]$ is defined through its representative in $\mathcal{M}_0^{x}$:
\begin{equation}
  \label{eqn:jMap2}
  j^+_x(\tau^*) = \left(\beta_1(\tau^*),\dots,\beta_{|\mathcal{V}_0^+|}(\tau^*)\right)\in \bigoplus_{i=1}^{|\mathcal{V}_0^+|}\mathbb{C}  = \mathcal{M}_0^{x},
\end{equation}
which is well-defined by the expression (\ref{eqn:cotraceExp}). We denote by $[a_{\tau^*}] = [j^+_x(\tau^*)]$ its class in $LF(\mathcal{B}^+_x)$. Note that by (\ref{eqn:cocycle!}), we have that
$$j^+_{\sigma(x)}(\sigma_* v)= \left(\tau_1(i^x_1j^+_x v),\dots, \tau_{|\mathcal{V}^+_1|}(i^x_1j^+_x v)\right)\in \mathbb{C}^{|\mathcal{V}_1^+|}=\mathcal{M}_0^{\sigma(x)},$$
where $\tau_\ell$ is the canonical generator for $\tr(M_{n_\ell})$, the trace space for the $\ell^{th}$ summand of the multimatrix algebra $\mathcal{M}_1^{x}$. In general, (\ref{eqn:cocycle!}) gives 
\begin{equation}
  \label{eqn:equivariance2}
  \begin{split}
    j^+_{\sigma^k(x)}(\sigma_*^{(k)} v) &=  \left(\tau_1(i^x_k\cdots i^x_1j^+_x (v)),\dots, \tau_{|\mathcal{V}^+_k|}(i^x_k\cdots i^x_1j^+_x (v))\right) \\
    &=  \left(\sigma_{(k)}^*\tau_1(j^+_x (v)),\dots, \sigma_{(k)}^* \tau_{|\mathcal{V}^+_k|}(j^+_x (v))\right) \in\mathbb{C}^{|\mathcal{V}^+_k|}= \mathcal{M}_0^{\sigma^k(x)},
    \end{split}
\end{equation}
where $\sigma_{(k)}^*$ is the dual to $\sigma^{(k)}_*$.
\section{Ergodic integrals}
\label{sec:erg}
This section is devoted to the proof of the main result of the paper, Theorem \ref{thm:main}. First, some necessary notions are introduced and some estimates derived. Then, in \S \ref{subsec:uppBnd1}, a proof of the upper bound (\ref{eqn:uppBnd1}) in Theorem \ref{thm:main} is derived. This is followed by the construction of special averaging sets in \S \ref{subsec:lowBnds} and a proof of (\ref{eqn:uppBnd2}) in Theorem \ref{thm:main}.

Throughout this section we assume that we are working with a minimal, ergodic $\sigma$-invariant Borel probability measure on $X_\mathcal{F}$, and that the collection $\mathcal{F}$ of substitutions are uniformly expanding and compatible. Throughout this section we also assume that $x\in X_\mathcal{F}$ is an Oseledets typical, Poincar\'e recurrent point. Let 
$$\mathring{X}_{\mathcal{B}_x}^0:=\left\{\bar{e} \in\mathring{X}_\mathcal{B}:\Delta_x(\bar{e})\in\mho_x\right\}.$$
\begin{definition}
Let $\mathcal{F}$ be a family of substitution tilings on the tiles $t_1,\dots, t_M$ and let $\Omega_x = \Delta_x(\mathring{X}_{\mathcal{B}_x})$ be the tiling space given by the minimal Bratteli diagram $\mathcal{B}_x$. A \textbf{spanning system of patches} for $\Omega_x$ is a collection $\Gamma = \{\Gamma_k\}_{k\geq 0}$ of sets of patches $\Gamma_k = \{\mathcal{P}_v\}_{v\in\mathcal{V}^+_k}$ with the following properties: for each $v\in\mathcal{V}_k^+$ there is a path $\bar{e}_v =(\bar{e}^-_v,\bar{e}^+_v)= (\dots, e_{-2},e_{-1},e_1,e_2,\dots)\in \mathring{X}_{\mathcal{B}_x}^0$ with $r(\bar{e}^+_v|_k )  = v$ and in that case $\mathcal{P}_v = \mathcal{P}_k(\bar{e}^+_v)$.
\end{definition}
A spanning system of patches gives a catalogue of all the supertiles in a given space. Along with this catalogue we can find a subset of the tiling space itself which corresponds to each of the patches in this catalogue. More specifically, given a spanning system of patches $\Gamma$ there is a corresponding system of \textbf{plaques}. For each patch $\mathcal{P}_v$ given by the system $\Gamma$, the corresponding plaque in $\Omega_x$ is
$$\mathcal{P}'_v:=\bigcup_{t\in\mathcal{P}_v}\varphi_t(\mathcal{T}_{\bar{e}_v})\subset \Omega_x.$$
We will denote by $X^\Gamma_\mathcal{B}\subset \mathring{X}_\mathcal{B}^0$ the set of paths parametrized by $\mathcal{V}^+$ which give the spanning system of patches $\Gamma$. 

Let $\mathcal{L}(\Omega_x)$ be the set of Lipschitz functions on $\Omega_x$ and for each $f\in \mathcal{L}(\Omega_x)$ denote by $L_f$ the Lipschitz constant. Given a spanning system of patches $\Gamma$ we define for $f\in\mathcal{L}(\Omega_x)$ and each $k\in\mathbb{N}$ the vector
\begin{equation}
  \label{eqn:vectors}
  \begin{split}
    V_\Gamma^k(f) &= \left(\int_{\mathcal{P}'_{v_1}}f\, d\bar{s},\dots,\int_{\mathcal{P}'{v_{|\mathcal{V}^+_k|}}}f\, d\bar{s}\right) \\
    &= \left(\int_{\mathcal{P}_{v_1}}f\circ \varphi_s\left(\mathcal{T}_{e^+_{v_1}}\right)\, ds,\dots,\int_{\mathcal{P}_{v_{|\mathcal{V}^+_k|}}}f\circ \varphi_s\left(\mathcal{T}_{e^+_{v_{|\mathcal{V}^+_k|}}}\right)\, ds\right)\in \mathbb{C}^{|\mathcal{V}^+_k|},
  \end{split}
\end{equation}
  where $d \bar{s}$ is the natural, leafwise volume form on $\Omega_x$. In words, the vectors are obtained by integrating the function $f$ along level-$k$ super tiles of all possible types, and we use the plaques $\mathcal{P}'_{v_i}$ given by the spanning system of patches. This will allow us to know how the function integrates along bigger and bigger orbits.

Since $\mathrm{dim}\,\tr^*(\mathcal{M}_0^{\sigma^k(x)}) =  |\mathcal{V}^+_k|$ there is a canonical isomorphism between $\tr^*(\mathcal{M}_0^{\sigma^k(x)})$ and $\mathbb{C}^{|\mathcal{V}^+_k|}$ taking the dual of the generator $\tau_{\ell_i}\in \tr(M_{\ell_i})$ to the $i^{th}$ standard basis vector in $\mathbb{C}^{|\mathcal{V}^+_k|}$ for all $i=1,\dots, |\mathcal{V}^+_k|$, where $\mathcal{M}_0^{\sigma^k(x)} = M_{\ell_1}\oplus\cdots\oplus M_{\ell_{|\mathcal{V}^+_k|}}$. As such, we can think of each $V^k_\Gamma(f,\bar{e})$ as an element of $\tr^*(\mathcal{M}_0^{\sigma^k(x)})$,
we can compare $V_\Gamma^{k+1}(f,\bar{e})$ with $\sigma_*V_\Gamma^k(f,\bar{e})$. The $i^{th}$ component of the difference is
\begin{equation}
  \label{eqn:approx1}
\begin{split}
&  \left| (V_\Gamma^{k+1}(f,\bar{e}) - \sigma_*V_\Gamma^k(f,\bar{e}))_i\right| = \\
 &\hspace{1in} \left|\int_{\mathcal{P}_{v_i}}f\circ \varphi_s\left(\mathcal{T}_{e^+_{v_i}}\right)\, ds - \sum_{    e'\in r^{-1}(v_i)}  \int_{\mathcal{P}_{s(e')}}f\circ \varphi_s\left(\mathcal{T}_{e^+_{s(e')}}\right)\, ds   \right|.
\end{split}
\end{equation}

Let $\varepsilon\in (0,\lambda^+_1)$. Since each patch $\mathcal{P}_v$ for $v\in\mathcal{V}_{k+1}^+$ is the union of patches given by level-$k$ supertiles, for any edge $e\in\mathcal{E}^+_{k+1}$ the transverse distance between the plaques $\mathcal{P}_{r(e)}'$ and $\mathcal{P}_{s(e)}'$ is
\begin{equation}
  \label{eqn:patchDis}
  d(\mathcal{P}_{r(e)}',\mathcal{P}_{s(e)}')\leq C_\varepsilon e^{-(\lambda_1^+-\varepsilon)k}
    \end{equation}
where the constant $C_\varepsilon$ is independent of $e$ and only depends on the family $\mathcal{F}, \mu$ and $\varepsilon$. For $v_i\in\mathcal{V}^+_k$ and $e\in r^{-1}(v_i)$, let
$$\mathcal{P}_{v_i,e}:= f^{-1}_{\bar{e}_v|_k}\circ f_e\left(\bigcup_{\substack{e'\in \mathcal{E}^+_{0,k-1}:\\r(e')=s(e)}}f_{e'}(t_{s(e')})\right).$$
As such, there are the decompositions of each $\mathcal{P}_{v_i}$ as patches tiled by level-$k-1$ supertiles:
\begin{equation}
  \label{eqn:patchDecomp}
  \mathcal{P}_{v_i} =  \bigcup_{e\in r^{-1}(v_i)}\mathcal{P}_{v_i,e}\hspace{1in}\mbox{ and }\hspace{1in}  \mathcal{P}_{v_i}' =  \bigcup_{e\in r^{-1}(v_i)}\mathcal{P}_{v_i,e}',
\end{equation}
so it follows that
\begin{equation}
  \label{eqn:approx2}
  \begin{split}
&  \int_{\mathcal{P}_{v_i}}f\circ \varphi_s\left(\mathcal{T}_{e^+_{v_i}}\right)\, ds - \sum_{    e'\in r^{-1}(v_i)}  \int_{\mathcal{P}_{s(e')}}f\circ \varphi_s\left(\mathcal{T}_{e^+_{s(e')}}\right)\, ds \\
    & \hspace{.8in}= \sum_{e\in r^{-1}(v_i)}\int_{\mathcal{P}_{v_i,e}}f\circ \varphi_s\left(\mathcal{T}_{e^+_{v_i}}\right)\, ds -  \int_{\mathcal{P}_{s(e)}} f\circ \varphi_s\left(\mathcal{T}_{e^+_{s(e)}}\right)\, ds .
  \end{split}
\end{equation}
Since both of the terms
$$\int_{\mathcal{P}_{v_i,e}}f\circ \varphi_s\left(\mathcal{T}_{e^+_{v_i}}\right)\, ds \hspace{.2in}\mbox{ and }\hspace{.2in} \int_{\mathcal{P}_{s(e)}} f\circ \varphi_s\left(\mathcal{T}_{e^+_{s(e)}}\right)\, ds $$
are integrating $f$ along pieces of leaves which correspond to the patches given by level-$(k-1)$ supertiles, the distance between these pieces is at most $C_\varepsilon e^{-(\lambda_1^+-\varepsilon)k}$, we can use the Lipschitz property to bound
\begin{equation}
  \label{eqn:approx3}
\left| \int_{\mathcal{P}_{v_i,e}}f\circ \varphi_s\left(\mathcal{T}_{e^+_{v_i}}\right)\, ds -  \int_{\mathcal{P}_{s(e)}} f\circ \varphi_s\left(\mathcal{T}_{e^+_{s(e)}}\right)\, ds \right| \leq L_fC_\varepsilon e^{-(\lambda_1^+-\varepsilon)k}\int_{\mathcal{P}_{v_i,e}}\left|f\circ \varphi_s\left(\mathcal{T}_{e^+_{v_i}}\right)\right|\, ds.
\end{equation}
for any $e\in r^{-1}(v_i)$. Returning to (\ref{eqn:approx1}) and using (\ref{eqn:patchDis})-(\ref{eqn:approx3}):
\begin{equation}
  \label{eqn:approx4}
  \begin{split}
    &    \left| (V_\Gamma^{k+1}(f,\bar{e}) - \sigma_*V_\Gamma^k(f,\bar{e}))_i\right| \leq \sum_{e\in r^{-1}(v_i)} \left| \int_{\mathcal{P}_{v_i,e}}f\circ \varphi_s\left(\mathcal{T}_{e^+_{v_i}}\right)\, ds -  \int_{\mathcal{P}_{s(e)}} f\circ \varphi_s\left(\mathcal{T}_{e^+_{s(e)}}\right)\, ds \right|  \\
    &\hspace{.3in}\leq \sum_{e\in r^{-1}(v_i)} L_f C_\varepsilon e^{-(\lambda^+_1-\varepsilon)k}\left|\int_{ \mathcal{P}_{v_i,e}}f\circ \varphi_s\left(\mathcal{T}_{e^+_{v_i}}\right)\right| \leq \sum_{e\in r^{-1}(v_i)} L_f C_\varepsilon e^{-(\lambda^+_1-\varepsilon)k}\mathrm{Vol}(\mathcal{P}_{s(e)})\|f\|_\infty \\
    &\hspace{1in}\leq \sum_{e\in r^{-1}(v_i)}  C_{f,\varepsilon} e^{-(\lambda_1^+-\varepsilon)k}e^{(\lambda^+_1+\varepsilon)k}\|f\|_\infty \leq C_{f,\varepsilon,\mathcal{F}}e^{2\varepsilon k},
  \end{split}
\end{equation}
where we have used that $\mathrm{Vol}(\mathcal{P}_{s(e)})\leq e^{(\lambda_1^++\varepsilon)k}$, which follows from the fact that $\mathrm{Vol}(\mathcal{P}_{s(e)})$ is roughly the number of tiles in the level-$k$ supertile $\mathcal{P}_{s(e)}$, which is exactly $\tau_{s(e)}(i_k\cdots i_1(\mathrm{Id}))$, and this is bounded by the largest growth rate of the trace cocycle.

By the estimate above we have that for any $\varepsilon>0$:
$$ \left\| V_\Gamma^{k+1}(f,\bar{e}) - \sigma_*V_\Gamma^k(f,\bar{e})\right\|\leq C^*_\varepsilon e^{2\varepsilon k},$$
for all $k>0$, so we can now invoke Bufetov's approximation Lemma \cite[Lemma 2.8]{bufetov:limit}, which says that, given a sequence of matrices $\{\Theta_k\}$ defined by a cocycle and sequence of vectors $\{V_k\}$ such that $\|V_{k+1} - \Theta_k V_k\|\leq C_\epsilon e^{\epsilon k}$ then there exists a vector $v_*$ on the first vector space whose orbit shadows the vectors $V_k$ at an exponential scale: $\|\Theta_k\cdots \Theta_1 v_* - V_{k+1}\|\leq C'_\epsilon e^{\epsilon k}$.

Applied to our situation, by (\ref{eqn:approx3}) and Bufetov's approximation Lemma there exists a $a_{f,\Gamma}\in E^+_x\subset \tr^*(\mathcal{B}_x^+)\subset \tr^*(\mathcal{M}_0^x) $ with the property that
\begin{equation}
  \label{eqn:BufLemma}
  \| j^+_{\sigma^{k+1}(x)}\left( \sigma_*^{(k)}a_{f,\Gamma}\right)  -  V^{k+1}_\Gamma(f)\|\leq C_\varepsilon'e^{2\varepsilon k}
\end{equation}
for all $k>0$. Thus we get a map
$$i^+_\Gamma:\mathcal{L}(\Omega_x)\rightarrow \tr^*(\mathcal{B}^+_x)$$
with $i^+_\Gamma(f) = a_{f,\Gamma}$ as defined above for any $f\in\mathcal{L}(\Omega_x)$. By composition with the map $j^+_x$ in (\ref{eqn:jMap}) we get a map $j^+_x\circ i^+_x:\mathcal{L}(\Omega_x)\rightarrow LF(\mathcal{B}^+_x)$. 
\subsection{Proof of the upper bound (\ref{eqn:uppBnd1})}
\label{subsec:uppBnd1}
For a tiling $\mathcal{T}$ of $\mathbb{R}^d$ of finite local complexity and a good Lipschitz domain $B$ with nonempty interior, we denote by $T\cdot B$ the set $(T\mathrm{Id})B$ and by $\mathcal{O}^-_\mathcal{T}(B)$ all the tiles of $\mathcal{T}$ which are completely contained in $B$.

Given $x\in\Sigma_N$, denote by $\theta_{(n)_x} = \theta_{x_{n}}\theta_{x_{n-1}}\cdots \theta_{x_{1}}$ the product of the contracting constants from the susbtitution maps. In other words, $\theta_{x_i}$ is the contraction constant of the substitution map $\mathcal{F}_{x_i}$. The following was proved in \cite[Lemma 8]{ST:random}.
\begin{lemma}
\label{lem:decomp}
  For a good Lipschitz domain $B$ with nonempty interior, tiling $\mathcal{T}_{\bar{e}}\in \Omega_x$ and $T>0$ there exists an integer $n = n(T,B)$ and a decomposition
  \begin{equation}
    \label{eqn:decomp2}
    \mathcal{O}_{\mathcal{T}_{\bar{e}}}^-(T\cdot B) =\bigcup_{i=0}^n\bigcup_{j=1}^{M(i)}\bigcup_{k=1}^{\kappa^{(i)}_j}t^{(i)}_{j,k}
  \end{equation}
  where $t^{(i)}_{j,k}$ is a level-$i$ supertile of type $j$ with
  \begin{enumerate}
  \item $\kappa^{(n)}_j\neq 0$ for some $j$ and $\mathrm{Vol}(T\cdot B)\leq K_1 \theta_{(n)_x}^{-d}$,
  \item $\displaystyle\sum_{j=1}^{M(i)}\kappa_j^{(i)}\leq K_2 \mathrm{Vol}(\partial (T\cdot B))\theta_{(i)_x}^{d-1}  $ for $i=0,\dots, n-1$.
  \item $R_1\theta_{(n)_x}^{-1} < T < R_1^{-1} R_2 \theta_{(n)_x}^{-1}$ and $n(R_2T, B)>n(T,B)$.
  \end{enumerate}
  for some $K_1,K_2, R_1, R_2>0$.
\end{lemma}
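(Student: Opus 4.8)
\textbf{Proof proposal for Lemma \ref{lem:decomp}.}
This is \cite[Lemma 8]{ST:random}, and the plan is to reconstruct its proof from the hierarchical structure of the tilings $\mathcal{T}_{\bar e}$. The first observation to record is that, because each $\mathcal{F}_\ell$ is uniformly expanding, a level-$k$ supertile of $\mathcal{T}_{\bar e}$ is a translate of $f_{\bar e|_k}^{-1}(t_v)$, i.e.\ an inflation by $\theta_{(k)_x}^{-1}$ of one of the prototiles $t_v$, tiled by unit-size copies of prototiles; hence it has diameter at most $D_k:=(\max_i\mathrm{diam}\,t_i)\,\theta_{(k)_x}^{-1}$ and volume at least $c_1\theta_{(k)_x}^{-d}$ with $c_1=c_1(\mathcal{F})>0$. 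Moreover the level-$k$ supertiles of $\mathcal{T}_{\bar e}$ tile $\mathbb{R}^d$ (meeting only along boundaries), each level-$(k{+}1)$ supertile is a union of level-$k$ supertiles, and every tile sits in a unique, nested family of level-$i$ supertiles $(i\ge 0)$. I then define $n=n(T,B)$ to be the largest integer with $\theta_{(n)_x}^{-1}\le cT$, for a small constant $c=c(B,\mathcal{F})>0$ fixed below. Since the contraction constants $\theta_{x_j}$ lie in the finite set $\{\theta_1,\dots,\theta_N\}\subset(0,1)$, the quantities $\theta_{(n)_x}^{-1}$ grow geometrically with bounded ratios, so $n$ is well defined and $\theta_{(n)_x}^{-1}\le cT<\theta_{(n+1)_x}^{-1}\le(\min_j\theta_j)^{-1}\theta_{(n)_x}^{-1}$. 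This already gives the two-sided bound on $T$ in (iii) for suitable $R_1,R_2$, and taking $R_2\ge(\min_j\theta_j)^{-1}$ forces $\theta_{(n+1)_x}^{-1}\le cR_2T$, hence $n(R_2T,B)>n(T,B)$.

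Next comes the decomposition itself, built greedily from the top down. Put into (\ref{eqn:decomp2}) every level-$n$ supertile of $\mathcal{T}_{\bar e}$ contained in $T\cdot B$; these are the $t^{(n)}_{j,k}$. Every remaining tile of $\mathcal{O}^-_{\mathcal{T}_{\bar e}}(T\cdot B)$ lies in a level-$n$ supertile that straddles $\partial(T\cdot B)$, and since a supertile is connected while $T\cdot B$ is open, such a supertile must meet $\partial(T\cdot B)$, so it lies within $D_n$ of $\partial(T\cdot B)$. Inside each straddling level-$n$ supertile, adjoin all level-$(n{-}1)$ sub-supertiles contained in $T\cdot B$, recurse, and continue down to level $0$ (individual tiles), at which stage all of $\mathcal{O}^-_{\mathcal{T}_{\bar e}}(T\cdot B)$ has been exhausted. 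By induction a level-$i$ supertile is used only when its level-$(i{+}1)$ parent straddles $\partial(T\cdot B)$, so that parent — and with it the used level-$i$ supertile — lies within $D_{i+1}\le(\min_j\theta_j)^{-1}D_i=:C'\theta_{(i)_x}^{-1}$ of $\partial(T\cdot B)$.

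For (i): since $B$ is open and nonempty, $B\supseteq B_{r_0}(p_0)$ for some $r_0>0$, so $T\cdot B\supseteq B_{r_0T}(Tp_0)$; choosing $c\le r_0/(\max_i\mathrm{diam}\,t_i)$ makes $D_n\le r_0T$, so the level-$n$ supertile of $\mathcal{T}_{\bar e}$ containing $Tp_0$ lies inside $B_{r_0T}(Tp_0)\subset T\cdot B$, giving $\kappa^{(n)}_j\ne 0$ for some $j$; and $\mathrm{Vol}(T\cdot B)=T^d\mathrm{Vol}(B)\le K_1\theta_{(n)_x}^{-d}$ is immediate from $T<(\min_j\theta_j)^{-1}c^{-1}\theta_{(n)_x}^{-1}$. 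For (ii): the used level-$i$ supertiles are pairwise disjoint up to boundaries, each of volume at least $c_1\theta_{(i)_x}^{-d}$, and all contained in the $C'\theta_{(i)_x}^{-1}$-neighborhood $\mathcal{N}$ of $\partial(T\cdot B)$, whence $\sum_j\kappa^{(i)}_j\le c_1^{-1}\theta_{(i)_x}^{d}\,\mathrm{Vol}(\mathcal{N})$. What remains is a Minkowski-type estimate $\mathrm{Vol}(\mathcal{N}(\partial(T\cdot B),\rho))\le C_B\,\mathcal{H}^{d-1}(\partial(T\cdot B))\,\rho$, valid uniformly over the $\rho$ that occur: by scaling $\mathcal{N}(\partial(T\cdot B),\rho)=T\cdot\mathcal{N}(\partial B,\rho/T)$, and for $i\le n$ one has $\rho/T=C'\theta_{(i)_x}^{-1}/T\le C'c\,\theta_{(i)_x}^{-1}\theta_{(n)_x}\le C'c$, so it suffices to know $\mathrm{Vol}(\mathcal{N}(\partial B,s))\le C_B' s$ for $s\le C'c$, which holds for a good Lipschitz domain because $\partial B$ is covered, up to an $\mathcal{H}^{d-1}$-null set, by finitely many bounded pieces of Lipschitz graphs and $\mathcal{H}^{d-1}(\partial B)<\infty$. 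Substituting (and using that $\mathcal{H}^{d-1}(\partial B)>0$ for a nonempty bounded open set) yields $\sum_j\kappa^{(i)}_j\le K_2\,\mathcal{H}^{d-1}(\partial(T\cdot B))\,\theta_{(i)_x}^{d-1}$. The step needing the most care is exactly this tubular-neighborhood bound: it is the only place the \emph{good Lipschitz domain} hypothesis is used, and one must verify that the rescaling by $T$ together with $T$ being comparable to $\theta_{(n)_x}^{-1}$ confines all relevant thicknesses to the range where rectifiability of $\partial B$ gives a linear, rather than higher-order, growth of the neighborhood's volume.
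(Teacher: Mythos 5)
Your reconstruction follows what is surely the proof of the cited result: the paper itself does not prove Lemma \ref{lem:decomp} but quotes it from \cite[Lemma 8]{ST:random}, and your argument --- defining $n$ by comparability of $T$ with $\theta_{(n)_x}^{-1}$ (which gives (iii)), the greedy top-down decomposition into maximal supertiles contained in $T\cdot B$, the observation that every level-$i$ piece used lies within $C'\theta_{(i)_x}^{-1}$ of $\partial(T\cdot B)$ because its parent straddles the boundary, and the disjointness-plus-volume count against a tubular neighborhood --- is exactly the standard route, and your treatment of (i) and (iii) is correct.

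The one genuine gap is at the step you yourself flagged, and your justification of it does not go through under the paper's literal definitions. You assert $\mathrm{Vol}\bigl(\mathcal{N}(\partial B,s)\bigr)\leq C_B's$ for small $s$ ``because $\partial B$ is covered, up to an $\mathcal{H}^{d-1}$-null set, by finitely many bounded pieces of Lipschitz graphs and $\mathcal{H}^{d-1}(\partial B)<\infty$.'' Two points: the maps $f_i$ in \S\ref{subsec:domains} are defined on all of $\mathbb{R}^{d-1}$, so the covered pieces need not be Lipschitz images of \emph{bounded} sets; and, more seriously, the exceptional $\mathcal{H}^{d-1}$-null set is unconstrained, and $\mathcal{H}^{d-1}$-null compact sets can have upper box dimension arbitrarily close to $d$. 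Concretely, for $d\geq 2$ let $F=\{0\}\cup\{1/\log n:n\geq 2\}$, let $E$ be a scaled copy of $F^{\times d}$ placed inside the unit ball, and let $B=B_2(0)\setminus E$. Then $B$ is open and bounded, $\partial B=\partial B_2(0)\cup E$ is covered by finitely many Lipschitz images up to the countable (hence $\mathcal{H}^{d-1}$-null) set $E$, and $\mathcal{H}^{d-1}(\partial B)<\infty$, so $B$ is a good Lipschitz domain in the sense of \S\ref{subsec:domains}; yet $\mathrm{Vol}\bigl(\mathcal{N}(\partial B,s)\bigr)\gtrsim \log^{-d}(1/s)$, which is not $O(s)$. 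In fact conclusion (ii) itself fails for such a $B$: every supertile in any decomposition (\ref{eqn:decomp2}) must avoid $T\cdot E$, which forces many more low-level supertiles near $T\cdot E$ than the right-hand side of (ii) permits once $T$ is large. So the linear neighborhood bound must be an input rather than a consequence of the stated definition: either read ``good Lipschitz domain'' in the stronger sense that $\partial B$ is a finite union of Lipschitz images of bounded subsets of $\mathbb{R}^{d-1}$ with no exceptional set --- then Federer's theorem on the Minkowski content of closed rectifiable sets gives $\mathrm{Vol}\bigl(\mathcal{N}(\partial B,s)\bigr)=\bigl(2\mathcal{H}^{d-1}(\partial B)+o(1)\bigr)s$, which is all you need --- or add the hypothesis that $\partial B$ has finite upper $(d-1)$-dimensional Minkowski content. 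With that reading (surely the intended one, and the one under which \cite[Lemma 8]{ST:random} is applied), your proof is complete; the remaining small adjustments (taking $c$ strictly smaller than $r_0/\max_i\mathrm{diam}\,t_i$ so the supertile sits in the open ball, and restricting to $T$ large enough that $\mathcal{O}^-_{\mathcal{T}_{\bar e}}(T\cdot B)$ is nonempty) are cosmetic.
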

Let $B$ be Lipschitz domain and $T>1$. For $\mathcal{T}_{\bar{e}}\in\Omega_x$ consider a level-$i$ super tile $t^{(i)}_{j,k}$ of type $j$ given by the decomposition given in Lemma \ref{lem:decomp} and $f\in\mathcal{L}(\Omega_x)$. For any $\varepsilon>0$ and spanning system $\Gamma$, as in (\ref{eqn:approx4}), one has that
\begin{equation}
  \label{eqn:BufApprox}
  \left| \int_{t^{(i)}_{j,k}} f\circ \varphi_s(\mathcal{T}_{\bar{e}})\, ds - \int_{\mathcal{P}_{v_j}}f\circ \varphi_s(\mathcal{T}_{\bar{e}_{v_j}})\, ds\right|\leq C_\varepsilon L_fe^{2\varepsilon i}
  \end{equation}
with $v_j\in\mathcal{V}^+_i$. Combining this with (\ref{eqn:BufLemma}) we have that
\begin{equation}
  \label{eqn:superEstimate}
  \left|\int_{t^{(i)}_{j,k}} f \circ \varphi_s(\mathcal{T})\, ds  - j^+_{\sigma^i(x)}\left(\sigma_*^{(i)}i^+_\Gamma(f)\right)_j\right|\leq C_\varepsilon'' e^{2\varepsilon i},
\end{equation}
where $C''_\varepsilon$ only depends on $\varepsilon$ and $\mathcal{F}$.

For any Oseledets regular $x$ and a generating trace $\tau_\ell^{x,k} \in \tr(\mathcal{M}_0^{\sigma^k(x)})$ there is a decomposition
$$\tau_\ell^{x,k} = \sum_{m^\pm=1}^{d^\pm_\mu} b_{m^\pm,\ell,k}\tau^\pm_{m,k} + b_{\infty, \ell,k}\tau_{\infty,k},$$
where $\tau^\pm_{m,k}\in T^\pm_m(\sigma^k(x))$ and $\tau_{\infty,k} \in T^\infty(\sigma^k(x))$ are unit vectors. Note that in such decomposition there is a $\mathcal{N}$ such that $|b_{m^\pm,\ell,k}|\leq \mathcal{N}$ for all indices. This follows from the fact that $\tau_{m,k}^\pm$ are unit vectors, $\tau_\ell^{x,k}$ are generating traces (i.e. unit vectors), and we are dealing with finite dimensional vector spaces. Since $i^+_\Gamma(f)\in E^+_x$, using (\ref{eqn:decomp2}) and (\ref{eqn:equivariance2}) it follows that
\begin{equation}
  \label{eqn:ergInt1} 
\begin{split}
 & \hspace{.3in}\int_{\mathcal{O}_{\mathcal{T}_{\bar{e}}}^-(T\cdot B)}f\circ \varphi_s(\mathcal{T}_{\bar{e}})\, ds  = \sum_{i=0}^{n(T)}\sum_{j=1}^{M(i)}\sum_{k=1}^{\kappa^{(i)}_j} \int_{t^{(i)}_{j,k}} f\circ \varphi_s(\mathcal{T}_{\bar{e}})\, ds \\
  & \hspace{1.5in}= \sum_{i=0}^{n(T)}\sum_{j=1}^{M(i)} \kappa^{(i)}_j\left(j^+_{\sigma^i(x)}\left(\sigma_*^{(i)}i^+_\Gamma(f)\right)_j + O(e^{2\varepsilon i}) \right)  \\
  & \hspace{1 in} =\sum_{i=0}^{n(T)}\sum_{j=1}^{M(i)} \kappa^{(i)}_j\left(\sigma^*_{(i)}\tau_j^{x,i} \left( j^+_{x}\left(i^+_\Gamma(f)\right) \right)+ O(e^{2\varepsilon i}) \right) \\
  & \hspace{.5 in}= \sum_{i=0}^{n(T)}\sum_{j=1}^{M(i)} \kappa^{(i)}_j\left(
  \sum_{m=1}^{d^+_\mu} b_{m,\ell,i} \sigma^*_{(i)} \tau^+_{m,i}\left(j^+_{x}(i^+_\Gamma(f))\right) 
  + O(e^{2\varepsilon i}) \right) \\
    & =   \sum_{i=0}^{n(T)}\sum_{j=1}^{M(i)} \kappa^{(i)}_j\left(
  \sum_{m=1}^{d^+_\mu} b_{m,\ell,i} \left\|\sigma_*^{(i)}|_{V^+_m(x)}\right\| \tau^+_{m,0}\left(j^+_{x}(i^+_\Gamma(f))\right) 
   + O(e^{2\varepsilon i}) \right).
\end{split}
\end{equation}
For any $\varepsilon>0$, the bounds in Lemma \ref{lem:decomp} give
\begin{equation}
\label{eqn:ergInt2}
  \begin{split}
    &\left|\int_{\mathcal{O}_{\mathcal{T}_{\bar{e}}}^-(T\cdot B)}f\circ \varphi_s(\mathcal{T}_{\bar{e}})\, ds\right|\leq \sum_{i=0}^{n(T)}\sum_{j=1}^{M(i)} \kappa^{(i)}_j\left( C_1\sum_{m=1}^{d^+_\mu}  \tau^+_{m,0}\left(j^+_{x}(i^+_\Gamma(f))\right) e^{(\lambda_m^++\varepsilon)i} \right)\\
    &    \hspace{1in}\leq C_2 \sum_{i=0}^{n(T)}\mathrm{Vol}(\partial (T\cdot B))\theta_{(x)_i}^{d-1}\sum_{m=1}^{d^+_\mu}  \tau^+_{m,0}\left(j^+_{x}(i^+_\Gamma(f))\right) e^{(\lambda_m^++\varepsilon)i} \\
 &    \hspace{1in}\leq C_3  \sum_{i=0}^{n(T)}(\theta_{x_{i+1}}\cdots \theta_{x_n})^{1-d}\sum_{m=1}^{d^+_\mu}  \tau^+_{m,0}\left(j^+_{x}(i^+_\Gamma(f))\right) e^{(\lambda_m^++\varepsilon)i}
  \end{split}
\end{equation}
where the fact that $\mathrm{Vol}(\partial (T\cdot B))(\theta_{e_1}\cdots \theta_{e_i})^{d-1}\leq C_3 (\theta_{x_{i+1}}\cdots \theta_{x_n})^{1-d} $ was used. This last estimate is a straightforward consequence of the estimates in Lemma \ref{lem:decomp} and the fact that $\mathrm{Vol}(\partial(T\cdot B ))\sim \mathrm{Vol}(T\cdot B)^{\frac{d-1}{d}}$ for Lipschitz domains $B$ and large $T$. If $\tau_{m,0}^+(j^+_x(i^+_\Gamma(f))) = 0$ for all $m=1,\dots, r-1$ but $\tau_{r,0}^+(j^+_x(i^+_\Gamma(f))) \neq 0$ for some $r\leq d^+_\mu$, then
\begin{equation}
\label{eqn:ergInt3}
\left|\int_{\mathcal{O}_{\mathcal{T}_{\bar{e}}}^-(T\cdot B)}f\circ \varphi_s(\mathcal{T}_{\bar{e}})\, ds\right|\leq  C_4  \sum_{i=0}^{n(T)}(\theta_{x_{i+1}}\cdots \theta_{x_n})^{1-d}  \tau^+_{r,0}\left([a_f]\right) e^{(\lambda_r^++\varepsilon)i} .
\end{equation}
Now, for any $\varepsilon>0$ we have that
$$(\theta_{x_{i+1}}\cdots\theta_{x_n})^{1-d} \leq C_\varepsilon' e^{\left(\frac{\lambda_1^+}{d}+\varepsilon )(n-i)(d-1\right)}.$$
for some $C_\varepsilon'>0$. Indeed, for an Oseledets-typical $x\in X_\mathcal{F}$, the leading exponent $\lambda_1^+$ gives the exponential rate of increase of the number of paths starting from $\mathcal{V}^+_0$ of length $k>0$ in $\mathcal{B}^+_x$. Since the paths of length $k$ are in bijection with tiles in $k$-approximants, the number of paths of length $k$ also give estimates on the volumes of patches for level-$k$ supertiles. Thus $\lambda_1^+$ gives the exponential rate of increase of volume of supertiles. So
\begin{equation}
  \label{eqn:asympContr}
  \lim_{n\rightarrow \infty} \frac{\log \theta_{x_1}\cdots \theta_{x_n}}{n} = -\frac{\lambda_1^+}{d}.
\end{equation}
Therefore we can continue with (\ref{eqn:ergInt2}):
\begin{equation}
  \label{eqn:ergLong}
  \begin{split}
&    \left|\int_{\mathcal{O}_{\mathcal{T}_{\bar{e}}}^-(T\cdot B)}f\circ \varphi_s(\mathcal{T}_{\bar{e}})\, ds\right|\leq C_5  \sum_{i=0}^{n(T)} e^{\left(\frac{\lambda_1^+}{d} + \varepsilon\right)(n-i)(d-1)} e^{(\lambda_r^++\varepsilon)i} =C_5  \sum_{i=0}^{n(T)} e^{\left(\frac{\lambda_1^+}{d} + \varepsilon\right)(i-n)(1-d)} e^{(\lambda_r^++\varepsilon)i} \\
    & \hspace{.8in}= C_5\sum_{i=0}^{n(T)} \exp\left[ \left( \lambda_r^+ - \lambda_1^+\frac{d-1}{d} + \varepsilon(2-d)\right)i +\left( \lambda_1^+\frac{d-1}{d}+\varepsilon(d-1)\right)n \right] \\
    & \hspace{.6in}= C_5\exp\left[ \left( \lambda_1^+\frac{d-1}{d}+\varepsilon(d-1)\right)n\right]\sum_{i=0}^{n(T)} \exp\left[ \left( \lambda_r^+ - \lambda_1^+\frac{d-1}{d} + \varepsilon(2-d)\right)i  \right] \\
    & \hspace{.4in}= C_6\exp\left[ \left( \lambda_1^+\frac{d-1}{d}+\varepsilon(d-1)\right)n\right] \left| 1-  \exp\left[ \left( \lambda_r^+ - \lambda_1^+\frac{d-1}{d} + \varepsilon(2-d)\right)n+1  \right] \right| \\
    & \hspace{.2in}\leq C_7\exp  \left[ \max \left\{  \lambda_1^+\frac{d-1}{d} +  \varepsilon(d-1), \lambda_r^+ +  \varepsilon   \right\}n \right].
  \end{split}
\end{equation}
Defining
$$\bar{\lambda}_{r,\varepsilon}^+ := \max \left\{  \lambda_1^+\frac{d-1}{d} +  \varepsilon(d-1),  \lambda_r^+ +  \varepsilon   \right\}$$
and using (iii) from Lemma \ref{lem:decomp}, we have that
\begin{equation}
  \label{eqn:ergInt6}
  \begin{split}
    \frac{\log \left|\displaystyle\int_{\mathcal{O}_{\mathcal{T}_{\bar{e}}}^-(T\cdot B)}f\circ \varphi_s(\mathcal{T}_{\bar{e}})\, ds\right|}{\log T} &\leq \frac{\log\left( C_7  \right) +\bar{\lambda}_{r,\varepsilon}^+n(T)}{\log(R_1^{-1}) + \log \theta_{(x)_n}^{-1}}.  \\
    \end{split}
\end{equation}
Recall that by (\ref{eqn:asympContr}) we have that
$$\lim_{n\rightarrow \infty}\frac{\log \theta_{(x)_n}^{-1}}{n} = \lim_{n\rightarrow \infty}\frac{1}{n}\sum_{i=1}^n\log\theta_{x_i}^{-1} = \frac{\lambda_1^+}{d},$$
so it follows from (\ref{eqn:ergInt6}) that
\begin{equation}
  \label{eqn:ergInt4}
\begin{split}
  \limsup_{T\rightarrow \infty}\frac{\log \left|\displaystyle\int_{\mathcal{O}_{\mathcal{T}_{\bar{e}}}^-(T\cdot B)}f\circ \varphi_s(\mathcal{T}_{\bar{e}})\, ds\right|}{\log T}  &\leq \limsup_{n\rightarrow \infty} \frac{\log\left( C_7  \right) +\bar{\lambda}_{r,\varepsilon}^+n(T)}{\log(R_1^{-1}) + \log \theta_{(x)_n}^{-1}}\\
  &= \frac{\bar{\lambda}_{r,\varepsilon}^+}{\lambda_1^+}d.
\end{split}
\end{equation}
Now, since
\begin{equation}
  \label{eqn:ergInt5}
  \begin{split}
    \left|\int_{T\cdot B}f\circ \varphi_s(\mathcal{T}_{\bar{e}})\, ds\right| &\leq \left|\int_{\mathcal{O}_{\mathcal{T}_{\bar{e}}}^-(T\cdot B)}f\circ \varphi_s(\mathcal{T}_{\bar{e}})\, ds\right| + \left|\int_{T\cdot B - \mathcal{O}_{\mathcal{T}_{\bar{e}}}^-(T\cdot B)}f\circ \varphi_s(\mathcal{T}_{\bar{e}})\, ds\right| \\
    &\leq  C_8  e^{\bar{\lambda}_{r,\varepsilon}^+n(T)} + O(\partial (T\cdot B)) =  C_8  e^{\bar{\lambda}_{r,\varepsilon}^+n(T)} + O(T^{d-1}),
    \end{split}
\end{equation}
this completes the proof of the bound (\ref{eqn:uppBnd1}).
\subsection{Special averaging sets and proof of (\ref{eqn:uppBnd2})}
\label{subsec:lowBnds}
Let $x\in X_\mathcal{F}$ be a Poincar\'e-recurrent, Oseledets-regular point and $\mathcal{T}_{\bar{e}}\in \Omega_x$ for some $\bar{e} = (\bar{e}^-,\bar{e}^+) = (\dots, e_{-2},e_{-1},e_1,e_2,\dots)\in\mathring{X}_{\mathcal{B}_x}$. For any $\varepsilon>0$ there exists a $T_\varepsilon>0$ such that
\begin{itemize}
\item  $T^{-1}\cdot \mathcal{O}_{\mathcal{T}_{\bar{e}}}^-(T\cdot B)$ is $\varepsilon$-close to $B$ in the Hausdorff metric,
\item $\mathcal{O}_{\mathcal{T}_{\bar{e}}}^-$ contains a ball of radius twice the minimal radius so that every ball of such radius contains a copy of every prototile in its interior
 \end{itemize}
for all $T>T_\varepsilon$. Pick some $T_* > T_\varepsilon$ and define $B_\varepsilon = T_*^{-1}\cdot \mathcal{O}_{\mathcal{T}_{\bar{e}}}^-(T_*\cdot B)\mbox{ and }\mathcal{P}_\varepsilon(\bar{e}) = T_*B_\varepsilon=\mathcal{O}_{\mathcal{T}_{\bar{e}}}^-(T_*\cdot B)$, which is a patch for all tilings in $\Omega_x$. The set $B_\varepsilon$ is at most $\varepsilon$ close to $B$ in the Hausdorff metric.

Let $k_i\rightarrow\infty$ denote the recurrence times, $\sigma^{k_i}(x)\rightarrow x$, and suppose that $\sigma^{k_i}(\bar{e})$ converges to $\bar{e}^* =  (\dots, e_{-2}^*,e_{-1}^*,e_1^*,e_2^*,\dots)\in \mathring{X}_{\mathcal{B}_x}$ along these times. Let $k_x$ be the smallest integer so that for all $v\in\mathcal{V}^+_0$ and $w\in\mathcal{V}^+_{k_x}$ there is a path $p\in\mathcal{E}^+_{0,k_x}$ with $s(p)=v$ and $r(p)=w$. It follows that there is a $k_\varepsilon'\geq k_x$ and finite set of paths $\mathcal{E}'_{B_\varepsilon}\subset \mathcal{E}^+_{0,k_\varepsilon'}$ such that for all $p\in\mathcal{E}'_{B_\varepsilon}$ one has that $r(p)=r(\bar{e}|_{k_\varepsilon'})$ and such that the patch $\mathcal{P}_\varepsilon(\bar{e})$ decomposes as
\begin{equation}
  \label{eqn:ePatchDec}
  \mathcal{P}_\varepsilon(\bar{e}) = \varphi_{\tau_{\bar{e}}}\left( \bigcup_{\bar{e}'\in \mathcal{E}'_{B_\varepsilon}} f^{-1}_{\bar{e}|_{k_\varepsilon'}}\circ f_{\bar{e}'}(t_{s(\bar{e}')}) \right)\subset \varphi_{\tau_{\bar{e}}}\left( \mathcal{P}_{k'_\varepsilon}(\bar{e}^+)\right),
\end{equation}
where $\tau_{\bar{e}}\in\mathbb{R}^d$ is completely determined by the negative part of $\bar{e}$. By the choice of $T_*$ the patch $\mathcal{P}_\varepsilon(\bar{e})$ is decomposed as the union of tiles
\begin{equation}
\label{eqn:tileDecomp}
  \mathcal{P}_\varepsilon(\bar{e}) = \bigcup_{\ell=1}^{M} \bigcup_{j=1}^{\kappa(\ell)}t_{\ell,j}
\end{equation}
where $t_{\ell,j}$ is a translate of the prototilie $t_\ell$. Note that the number of tiles in the decomposition (\ref{eqn:tileDecomp}) is $|\mathcal{E}'_{B_\varepsilon}|$ from (\ref{eqn:ePatchDec}).

 By minimality, there is a smallest $k_\varepsilon> k_\varepsilon'$ such that there is a path $p'\in\mathcal{E}_{k_\varepsilon',k_\varepsilon}$ with $s(p') = r(\bar{e}|_{k_\varepsilon'})$ and $r(p') = r(\bar{e}^*|_{k_\varepsilon})$. This gives a finite set of paths $\mathcal{E}_{B_\varepsilon}\subset \mathcal{E}^+_{0,k_\varepsilon}$ obtained by concatenating $p'$ to every path $e'\in\mathcal{E}'_\varepsilon$. As such, the patch decomposes as
$$\mathcal{P}_\varepsilon(\bar{e}) = \varphi_{\tau_{\bar{e}}}\left( \bigcup_{\bar{e}'\in \mathcal{E}_{B_\varepsilon}} f^{-1}_{\bar{e}|_{k_\varepsilon'}p'}\circ f_{\bar{e}'}(t_{s(\bar{e}')}) \right).$$
Considering the patch
$$\mathcal{P}^*_\varepsilon(\bar{e}) =  \bigcup_{\bar{e}'\in \mathcal{E}_{B_\varepsilon}} f^{-1}_{\bar{e}^*|_{k_\varepsilon}}\circ f_{\bar{e}'}(t_{s(\bar{e}')}) \subset \mathcal{P}_{k_\varepsilon}(\bar{e}^{*+}) ,$$
by Lemma \ref{lemma:deform}, there is a $\tau_{\bar{e},\bar{e}^*}\in\mathbb{R}^d$ such that $\mathcal{P}^*_\varepsilon(\bar{e}) = \mathcal{P}_\varepsilon(\bar{e}) + \tau_{\bar{e},\bar{e}^*}$.

Let me take the time here to describe what is about to be done. So far we have constructed a set $B_\varepsilon$ which is $\varepsilon$-close to $B$, but it is of a special type: when dilated by $T_*$, it becomes a patch which has been denoted by $\mathcal{P}_\varepsilon(\bar{e})$. Now, since $x$ is Poincar\'e recurrent, there is a sequence of times $k_i\rightarrow \infty$ such that all the tilings in $\Omega_{\sigma^{k_i}(x)}$ admit $\mathcal{P}_\varepsilon(\bar{e})$ as a patch since $\sigma^{k_i}(x)\rightarrow x$. Recall that by Proposition \ref{prop:renorm} patches in $\Omega_{\sigma^{k_i}(x)}$ correspond to ``superpatches'' in $\Omega_x$, that is, patches in $\Omega_x$ made up of level-$k_i$ supertiles. So we want to dilate $\mathcal{P}_\varepsilon(\bar{e})$ along a sequence of times $T_i$ so that, up to a small translation, it becomes a patch made up \textbf{ of only} level-$k_i$ supertiles, unlike general dilations of sets which, as Lemma \ref{lem:decomp} shows, invlolve supertiles of all levels. We do all this because the integrals along this sequence of superpatches can be controlled very well.

For all $i$ large enough the set $\mathcal{E}^+_{k_i,k_i+k_\varepsilon}$ is a copy of $\mathcal{E}^+_{0,k_\varepsilon}$ and as such it contains a copy $\mathcal{E}_{B_{\varepsilon}}^i$ of $\mathcal{E}_{B_\varepsilon}$. In other words, since $\mathcal{E}^+_{k}$ is determined by $\mathcal{F}_{x_k}$ and $x_{k_i+j} = x_j$ for all large $i$ and $0< j \leq k_\varepsilon$ (by Poincar\'e recurrence), we can make the identification $\mathcal{E}^+_{k_i+j} = \mathcal{E}^+_{j}$ for all large $i$. Moreover, since $k_\varepsilon\geq k_x$, for $i$ large enough there is a path from $v$ to $r(\bar{e}_{k_i+k_\varepsilon})$ for all $v\in \mathcal{V}^+_{k_i}$. Define the patches
\begin{equation}
  \label{eqn:PiPatch}
  \mathcal{P}^i_\varepsilon(\bar{e}):= \bigcup_{\substack{e_v\in\mathcal{E}^{i}_{B_\varepsilon} \\ \bar{e}'\in\mathcal{E}_{0,k_i} \\ s(e_v) = r(\bar{e}')}} f^{-1}_{\bar{e}|_{k_i+k_\varepsilon}}\circ f_{\bar{e}' e_v}(t_{s(\bar{e}')}) 
  \end{equation}
and note that
\begin{equation}
  \label{eqn:bigBlowup}
  \begin{split}
    \mathcal{P}^i_\varepsilon(\bar{e}) &=  \bigcup_{\substack{e_v\in\mathcal{E}^{i}_{B_\varepsilon} \\ \bar{e}'\in\mathcal{E}_{0,k_i} \\ s(e_v) = r(\bar{e}')}} f^{-1}_{\bar{e}|_{k_i+k_\varepsilon}}\circ f_{\bar{e}' e_v}(t_{s(\bar{e}')}) = \bigcup_{\substack{e_v\in\mathcal{E}^{i}_{B_\varepsilon} \\ \bar{e}'\in\mathcal{E}_{0,k_i} \\ s(e_v) = r(\bar{e}')}} f^{-1}_{\bar{e}|_{k_i+k_\varepsilon}}\circ f_{e_v}\circ f_{\bar{e}' }(t_{s(\bar{e}')}) \\
    &=   \bigcup_{\substack{e_v\in\mathcal{E}^{i}_{B_\varepsilon} \\ \bar{e}'\in\mathcal{E}_{0,k_i} \\ s(e_v) = r(\bar{e}')}} f^{-1}_{\bar{e}|_{k_i}}\circ
    f^{-1}_{(e_{k_i+1},\dots, e_{k_i+k_\varepsilon})} \circ f_{e_v}\circ f_{\bar{e}' }(t_{s(\bar{e}')}) \\
    &=   f^{-1}_{\bar{e}|_{k_i}} \left(\bigcup_{\substack{e_v\in\mathcal{E}^{i}_{B_\varepsilon} \\ \bar{e}'\in\mathcal{E}_{0,k_i} \\ s(e_v) = r(\bar{e}')}}  f^{-1}_{\bar{e}^*|_{k_\varepsilon}} \circ f_{e_v}\circ f_{\bar{e}' }(t_{s(\bar{e}')}) \right) =  f^{-1}_{\bar{e}|_{k_i}} \left(\bigcup_{\substack{e_v\in\mathcal{E}_{B_\varepsilon} }}  f^{-1}_{\bar{e}^*|_{k_\varepsilon}} \circ f_{e_v}(t_{s(e_v)}) \right) \\
    & = f^{-1}_{\bar{e}|_{k_i}}( \mathcal{P}^*_\varepsilon(\bar{e})) = f^{-1}_{\bar{e}|_{k_i}}\left( \mathcal{P}_\varepsilon(\bar{e}) + \tau_{\bar{e},\bar{e}^*}\right) = f^{-1}_{\bar{e}|_{k_i}}\left( T_*\cdot B_\varepsilon + \tau_{\bar{e},\bar{e}^*}\right).
  \end{split}
  \end{equation}
Thus, setting $t_{\ell,j}^{(i)}:= f^{-1}_{\bar{e}|_{k_i}}(t_{\ell,j} + \tau_{\bar{e},\bar{e}^*})$ by (\ref{eqn:tileDecomp}) it follows that
$$\mathcal{P}_{\varepsilon}^i(\bar{e}) =  \bigcup_{\ell=1}^{M} \bigcup_{j=1}^{\kappa(\ell)} t^{(i)}_{\ell,j}$$
which expresses $\mathcal{P}^i_{\varepsilon}(\bar{e})$ as the union of level-$k_i$ supertiles of $\mathcal{T}_{\bar{e}}$.

\begin{lemma}
  \label{lem:vectors}
There is a compact set $\mathcal{K}\subset \mathbb{R}^d$ such that for all $i$ large enough there exists a $T_{i}>0$ and $\tau_i\in\mathcal{K}$ such that $T_i\cdot (\mathcal{P}_\varepsilon(\bar{e}) + \tau_i) = \mathcal{P}^i_x(\bar{e})$.
\end{lemma}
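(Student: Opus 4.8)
The plan is to extract $T_i$ and $\tau_i$ directly from the chain of identities (\ref{eqn:bigBlowup}), whose last line already reads
\[
\mathcal{P}^i_\varepsilon(\bar e)\;=\;f^{-1}_{\bar e|_{k_i}}\bigl(T_*\cdot B_\varepsilon+\tau_{\bar e,\bar e^*}\bigr),
\]
valid for all $i$ large enough, combined with the affine structure of the maps $f_{\bar e}$. First I would record that, since $\mathcal{F}$ is uniformly expanding, each edge map has the form $f_e(x)=\theta_{x_j}x+\tau_e$ for $e\in\mathcal{E}^+_j$, so the composition $f_{\bar e|_{k_i}}$ is itself an affine dilation, $f_{\bar e|_{k_i}}(x)=\theta_{(k_i)_x}\,x+c_i$ with $\theta_{(k_i)_x}=\theta_{x_1}\cdots\theta_{x_{k_i}}$ and $c_i:=f_{\bar e|_{k_i}}(0)$; hence $f^{-1}_{\bar e|_{k_i}}(y)=\theta_{(k_i)_x}^{-1}(y-c_i)$, a dilation of ratio $\theta_{(k_i)_x}^{-1}$ composed with a translation.

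Next I would set $T_i:=\theta_{(k_i)_x}^{-1}$ and $\tau_i:=\tau_{\bar e,\bar e^*}-c_i$. Using $\mathcal{P}_\varepsilon(\bar e)=T_*\cdot B_\varepsilon$ and the elementary identity $T\cdot(E+v)=(T\cdot E)+Tv$ for a dilation $T>0$, substitution gives
\[
T_i\cdot\bigl(\mathcal{P}_\varepsilon(\bar e)+\tau_i\bigr)
=\theta_{(k_i)_x}^{-1}\bigl(T_*\cdot B_\varepsilon+\tau_{\bar e,\bar e^*}-c_i\bigr)
=f^{-1}_{\bar e|_{k_i}}\bigl(T_*\cdot B_\varepsilon+\tau_{\bar e,\bar e^*}\bigr)
=\mathcal{P}^i_\varepsilon(\bar e),
\]
which is the asserted equality. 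I would also remark, for use in the proof of (\ref{eqn:uppBnd2}), that $T_i=\theta_{(k_i)_x}^{-1}\to\infty$, since every $\theta_{x_j}\in(0,1)$ and $k_i\to\infty$.

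It then remains to confine the vectors $\tau_i=\tau_{\bar e,\bar e^*}-c_i$ to a fixed compact set. The vector $\tau_{\bar e,\bar e^*}$ is a single fixed vector --- it is produced once and for all by Lemma \ref{lemma:deform} from the two uniformly bounded patches $\mathcal{P}_\varepsilon(\bar e)$ and $\mathcal{P}^*_\varepsilon(\bar e)$ --- so the only quantity to bound uniformly in $i$ is $\|c_i\|=\|f_{\bar e|_{k_i}}(0)\|$. Here I would use that the origin lies in the prototile $t_{s(\bar e)}$ (prototiles are assumed to contain $0$ in their interior), and that iterating the graph-IFS relation (\ref{eqn:GIFS}) along the path $\bar e|_{k_i}$ yields $f_{\bar e|_{k_i}}\bigl(t_{s(\bar e)}\bigr)\subseteq t_{r(\bar e|_{k_i})}$; hence $c_i\in t_{r(\bar e|_{k_i})}$, which is one of finitely many prototiles, all of diameter at most some $\rho<\infty$. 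Since each prototile contains $0$, this gives $\|c_i\|\le\rho$, so $\tau_i\in\mathcal{K}:=\overline{B_\rho\bigl(\tau_{\bar e,\bar e^*}\bigr)}$, a compact set independent of $i$. The argument is essentially bookkeeping; the only points that require care are (a) that the $k_\varepsilon$-truncation visible in (\ref{eqn:PiPatch}) has already been absorbed in (\ref{eqn:bigBlowup}), so that it is the $k_i$-truncation, not $k_i+k_\varepsilon$, that determines the dilation factor $T_i$, and (b) that the translation part introduced by inverting $f_{\bar e|_{k_i}}$ is precisely the piece absorbed into $\tau_i$.
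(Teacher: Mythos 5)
Your proof is correct, and for the crux of the lemma---keeping the translation vectors in a fixed compact set---it takes a genuinely different route from the paper. The paper also starts from (\ref{eqn:bigBlowup}) and takes $T_i$ proportional to $\theta_{(k_i)_x}^{-1}$, but it controls the translations dynamically: by recurrence and repetitivity a copy of the patch occurs within distance $T_iR_\varepsilon$ of the origin, so ``without loss of generality'' the translation $\tau^i_{\bar e,\bar e^*}$ lies in $B_{T_iR_\varepsilon}(0)$, and after dividing by $T_i$ the vectors $\tau_i$ land in the fixed ball $B_{R_\varepsilon}(0)$. You instead bound the translation algebraically: writing $f_{\bar e|_{k_i}}(x)=\theta_{(k_i)_x}x+c_i$, you note $c_i=f_{\bar e|_{k_i}}(0)\in f_{\bar e|_{k_i}}\bigl(t_{s(\bar e)}\bigr)\subseteq t_{r(\bar e|_{k_i})}$, so $\|c_i\|$ is bounded by the largest prototile diameter and $\tau_i=\tau_{\bar e,\bar e^*}-c_i$ stays in an explicit compact set. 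Your argument is more elementary (no appeal to the repetitivity radius, no ``WLOG'' relocation of the patch) and it produces the patch $\mathcal{P}^i_\varepsilon(\bar e)$ of (\ref{eqn:PiPatch}) itself rather than a nearby translate, which is arguably cleaner; the paper's version, in exchange, expresses the bound through the repetitivity scale $R_\varepsilon$, the mechanism it leans on elsewhere in the section. The one bookkeeping discrepancy is normalization: you take $T_i=\theta_{(k_i)_x}^{-1}$ by reading the statement literally with $\mathcal{P}_\varepsilon(\bar e)=T_*\cdot B_\varepsilon$, whereas the paper's proof (and its use in \S\ref{subsec:lowBnds}) works with $B_\varepsilon$ and $T_i=\theta_{(k_i)_x}^{-1}T_*$; the two differ only by the constant factor $T_*$, which shifts $\log T_i$ by a constant and changes nothing downstream.
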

\begin{proof}
  By (\ref{eqn:bigBlowup}) we have that
  $$\mathcal{P}^i_\varepsilon(\bar{e}) = f^{-1}_{\bar{e}|_{k_i}}\left( T_*\cdot B_\varepsilon + \tau_{\bar{e},\bar{e}^*}\right) = \theta_{(k_i)_x}^{-1}T_*\cdot B_\varepsilon + \tau^i_{\bar{e},\bar{e}^*}$$
  for some $\tau^i_{\bar{e},\bar{e}^*}\in \mathbb{R}^d$.  Defining $T_i:= \theta_{(k_i)_x}^{-1}T_*$ we have that $\mathcal{P}^i_\varepsilon(\bar{e}) = T_i\cdot B_\varepsilon + \tau^i_{\bar{e},\bar{e}^*}$. By our assumption of recurrence we have that there exists a $R_\varepsilon$ such that for all $i$ large enough, the patch $\mathcal{P}_\varepsilon(\bar{e})$ is found in any ball of radius $R_\varepsilon$ around any point in $\mathbb{R}^d$ for any $\mathcal{T}\in \Omega_{\sigma^{k_i}(x)}$ for all $i$ large enough. The scaling $T_i$ relates the scales of $\Omega_x$ and those of $\Omega_{\sigma^{k_i}(x)}$. In other words, level-$k_i$ supertiles in $\Omega_{x}$ correspond to tiles in $\Omega_{\sigma^{k_i}(x)}$ and the difference in scales is precisely $T_i$. By this relationship of scale and repetitivity, any ball of radius $T_iR_\varepsilon$ in $\mathcal{T}_{\sigma^{k_i}(\bar{e})}$ contains a copy of the patch $\mathcal{P}^i_\varepsilon(\bar{e})$. So without loss of generality we can assume that $\tau^i_{\bar{e},\bar{e}^*}\in B_{T_iR_\varepsilon}(0)$. Letting $\tau_i = T_i^{-1}\tau^i_{\bar{e},\bar{e}^*}$ we get that $T_i\cdot (\mathcal{P}_\varepsilon(\bar{e}) + \tau_i) = \mathcal{P}^i_x(\bar{e})$.
\end{proof}
\subsubsection{Implicit upper bound}
Let $f\in\mathcal{L}(\Omega_x)$. By (\ref{eqn:bigBlowup}) for all $i$ large enough there is the decomposition
$$\int_{\mathcal{P}_\varepsilon^i(\bar{e})} f\circ \varphi_s(\mathcal{T}_{\bar{e}})\, ds = \sum_{\ell = 1}^{M}\sum_{j=1}^{\kappa(\ell)}\int_{t_{\ell,j}^{(i)}} f\circ \varphi_s(\mathcal{T}_{\bar{e}})\, ds$$
which, after choosing $\varepsilon'>0$ and using (\ref{eqn:BufApprox}) and (\ref{eqn:superEstimate}), becomes, as in (\ref{eqn:ergInt1})
\begin{equation}
\label{eqn:uppSplit}
  \begin{split}
    \int_{\mathcal{P}_\varepsilon^i(\bar{e})} f\circ \varphi_s(\mathcal{T}_{\bar{e}})\, ds &= \sum_{\ell = 1}^{M}\kappa(\ell) \left(j^+_{\sigma^{k_i}(x)}\left(\sigma_*^{(k_i)}i^+_\Gamma(f)\right)\right)_\ell + O(e^{2\varepsilon' k_i}) \\
   &= \sum_{\ell = 1}^{M}\kappa(\ell)  \sigma^*_{(k_i)}\tau_j^{x,k_i}  \left(j^+_{x}(i^+_\Gamma(f))\right)     + O(e^{2\varepsilon' k_i}) \\
   &= \sum_{\ell = 1}^{M}\kappa(\ell)   \sum_{m=1}^{d^+_\mu} b_{m,\ell,k_i} \sigma^*_{(k_i)} \tau^+_{m,k_i}\left(j^+_{x}i^+_\Gamma(f)\right)      + O(e^{2\varepsilon' k_i}) \\
   &= \sum_{\ell = 1}^{M}\kappa(\ell)   \sum_{m=1}^{d^+_\mu} b_{m,\ell,k_i}   \left\|\sigma_*^{(k_i)}|_{V^+_m(x)}\right\|  \tau^+_{m,0}\left(j^+_{x}(i^+_\Gamma(f))\right)        + O(e^{2\varepsilon' k_i}).
  \end{split}
\end{equation}
So if $\tau_{m,0}^+\left(j^+_{x}(i^+_\Gamma(f))\right) = 0$ for all $m<r$ but $\tau_{r,0}^+\left(j^+_{x}(i^+_\Gamma(f))\right) \neq 0$,
\begin{equation}
\label{eqn:lowBnd}
  \int_{\mathcal{P}_\varepsilon^i(\bar{e})} f\circ \varphi_s(\mathcal{T}_{\bar{e}})\, ds =   \sum_{\ell = 1}^{M}\kappa(\ell)   \sum_{m=r}^{d^+_\mu} b_{m,\ell,k_i}  \left\|\sigma_*^{(k_i)}|_{V^+_m(x)}\right\|  \tau^+_{m,0}\left([a_f]\right)        + O(e^{2\varepsilon' k_i})
\end{equation}
for all $i$, from which it follows that
$$\left|   \int_{T_i(B_\varepsilon + \tau_i)} f\circ \varphi_s(\mathcal{T}_{\bar{e}})\, ds \right|\leq C e^{(\lambda^+_r +\varepsilon') k_i}$$
for all $i$. Since $T_i$ is proportional to $\theta^{-1}_{(k_i)_x}$, we can estimate as in (\ref{eqn:ergInt3})-(\ref{eqn:ergInt4}) to obtain
$$\limsup_{i\rightarrow \infty}\frac{\log\left|\displaystyle\int_{T_i(B_\varepsilon + \tau_i)}f\circ \varphi_s(\mathcal{T}_{\bar{e}})\, ds\right|}{\log T_i} \leq \frac{\lambda_r^++\varepsilon'}{\lambda_1^+}d.$$
\subsubsection{Implicit lower bound}
We partition the set of indices $\{1,\dots, M\}$ into two sets $I^+,I^0$. An index $\ell$ is in $I^+$ if
$$\limsup_{i\rightarrow \infty} \frac{\left|\left(j^+_{\sigma^{k_i}(x)}\left(\sigma_*^{(k_i)}i^+_\Gamma(f)\right)\right)_\ell\right|}{\left\|\sigma^{k_i}_*\pi^+_{\ell,x}i^+_\Gamma f\right\|}>0,$$
and $\ell\in I^0$ otherwise, where we recall $\pi^+_{\ell,x}:\tr^*(\mathcal{M}^x_0)\rightarrow V_\ell^+(x) $ is the corresponding projection to the $\ell^{th}$ positive Oseledets subspace. The set $I^+$ is not empty because 1) by assumption, $\tau_\ell^+([a_f]) = 0$ for all $\ell<r$ and $\tau_r^+([a_f])\neq 0$, meaning that $\pi^+_{\ell,x}i^+_\Gamma f = 0$ for all $\ell<r$ but $\pi^+_{r,x}i^+_\Gamma f\neq 0$; and 2) all norms are equivalent in finite dimensional vector spaces.

Now we recall (\ref{eqn:uppSplit}) and express it with indices according to the partition $I^+,I^0$:
$$\int_{\mathcal{P}_\varepsilon^i(\bar{e})} f\circ \varphi_s(\mathcal{T}_{\bar{e}})\, ds = \sum_{\ell \in I^+ }\kappa(\ell) \left(j^+_{\sigma^{k_i}(x)}\left(\sigma_*^{(k_i)}i^+_\Gamma(f)\right)\right)_\ell + \sum_{\ell \in I^0 }\kappa(\ell) \left(j^+_{\sigma^{k_i}(x)}\left(\sigma_*^{(k_i)}i^+_\Gamma(f)\right)\right)_\ell +O(e^{2\varepsilon' k_i}),$$
which, after rearranging, using the triangle inequality, and rearranging again, we get
\begin{equation}
  \label{eqn:liminf1}
  \begin{split}
    \left|\int_{\mathcal{P}_\varepsilon^i(\bar{e})} f\circ \varphi_s(\mathcal{T}_{\bar{e}})\, ds\right| &\geq \left|\sum_{\ell \in I^+ }\kappa(\ell) \left(j^+_{\sigma^{k_i}(x)}\left(\sigma_*^{(k_i)}i^+_\Gamma(f)\right)\right)_\ell\right|\\
    &\hspace{.25in}- \left|\sum_{\ell \in I^0 }\kappa(\ell) \left(j^+_{\sigma^{k_i}(x)}\left(\sigma_*^{(k_i)}i^+_\Gamma(f)\right)\right)_\ell +O(e^{2\varepsilon' k_i})\right| \\
    &\geq  C^+\|\sigma^{k_i}_*\pi^+_{r,x}i^+_\Gamma f\|
  \end{split}
\end{equation}
for all $i$ and some $C^+>0$ small enough. Recalling that $T_i$ is proportional to $\theta_{(k_i)_x}^{-1}$ and using (\ref{eqn:asympContr}):
\begin{equation}
  \label{eqn:liminf2}
  \begin{split}
    \limsup_{i\rightarrow \infty}\frac{\log \left|\displaystyle\int_{\mathcal{P}_\varepsilon^i(\bar{e})} f\circ \varphi_s(\mathcal{T}_{\bar{e}})\, ds\right|}{\log T_i} &\geq  \limsup_{i\rightarrow \infty} \frac{\log C^+\|\sigma^{k_i}_*\pi^+_{r,x}i^+_\Gamma f\|}{\log T_i} \\
    &= \limsup_{i\rightarrow \infty} \frac{k_i}{\log T_i}\frac{\log \left\|\sigma^{k_i}_*\pi^+_{r,x}i^+_\Gamma f\right\|}{k_i} = \frac{d}{\lambda_1^+}\lambda_r^+.
  \end{split}
\end{equation}
\section{Solenoids and the Denjoy-Koksma inequality}
\label{sec:DK}
For a function $f:X\rightarrow\mathbb{R}$ on a Cantor set $X$ and a clopen subset $C\subset X$, define
$$\mathrm{Var}(f,C) := \sup_{x,y\in C}|f(x)-f(y)|$$
and, for a partition $P = \{P_1,\dots ,P_n\}$ of $X$ into disjoint clopen subsets, define
$$\mathrm{Var}(f,P) = \sum_{i=1}^n \mathrm{Var}(f,P_i).$$
A Cantor set naturally carries a metric structure. In fact, Cantor sets carry \emph{ultrametric} structures, and so any ball $B_\epsilon(x)\subset X$ is a clopen set. Let $d_X:X^2\rightarrow \mathbb{R}$ be an ultrametric on $X$. For any set $C\subset X$ let $\mathrm{diam}\, C =\sup\{d_X(x,y):x,y\in C\} $. Let $\mathcal{P}_\epsilon$ be the set of all partitions $\{P_1,\dots, P_k\}$ of $X$ by clopen sets with $\mathrm{diam}\, P_i\leq \epsilon$ for all $i$.
Finally, let
$$\mathrm{Var}_\epsilon(f) = \sup_{P\in\mathcal{P}_\epsilon} \mathrm{Var}(f,P)\hspace{.75in}\mbox{ and } \hspace{.75in}\mathrm{Var}(f) = \sup_{\epsilon>0}\mathrm{Var}_\epsilon(f).$$
A function $f:X\rightarrow \mathbb{R}$ on a Cantor set $X$ has \textbf{bounded variation} if $\mathrm{Var}(f)<\infty$. Note that if $f$ is a locally constant function on a Cantor set, then $\mathrm{Var}\, (f)=0$, so it is of bounded variation.
\begin{definition}
Let $\Omega$ be a the tiling space of an aperiodic, repetitive tiling of finite local complexity. A continuous function $f:\Omega\rightarrow \mathbb{R}$ has \textbf{bounded variation} if there is a $V_f< \infty$ such that $ \mathrm{Var}(f|_{\mho})\leq V_f$ for all transversals $\mho\subset \Omega$ which are Cantor sets.
\end{definition}
The set of continuous functions on $\Omega$ with bounded variation is denoted by $\mathrm{BV}(\Omega)$. Note that if $f$ is a transversally locally constant function then it is in $\mathrm{BV}(\Omega)$. Let $\bar{q} = (q_1,q_2,\dots)\in \mathbb{N}^{\mathbb{N}}$, where $q_k>1$ for all $k$. For any such $\bar{q}$ we will denote $q_{(n)} = q_1\cdots q_{n}$.
\begin{definition}
A \textbf{$d$-dimensional solenoid} is the tiling space $\Omega_{\bar{q}}$ associated to a family of substitutions $\mathcal{F}$ on a single prototile $t_1 = \left[-\frac{1}{2},\frac{1}{2}\right]^d$. The Bratteli diagram $\mathcal{B}_{\bar{q}}$ for such tiling spaces have a single vertex at every level and $|\mathcal{E}_k| = q_k^d$ for all $k\in\mathbb{Z}$, and it is also required here that $\theta_{e} = q^{-d}_k$ for any $e\in\mathcal{E}_k$. In this case the family $\mathcal{F}$ is allowed to be infinite.
\end{definition}
\begin{remark}
  The definition for a solenoid above is slightly more general than the usual definition of a solenoid as an inverse limit of $\mathbb{T}^d$ under maps of the form $q_n\cdot \mbox{Id}$.
  \end{remark}

The goal of this section is to prove a type of bound known as a \textbf{Denjoy-Koksma inequality} (\cite[\S VI.3]{herman:cercle}) for solenoids.
\begin{theorem}[Denjoy-Koksma inequality for solenoids]
  Let $\Omega_{\bar{q}}$ be a $d$-dimensional solenoid. Then for any $f\in \mathrm{BV}(\Omega_{\bar{q}})$ and $p\in \Omega_{\bar{q}}$,
  $$\left| \int_{[0,q_n]^d} f\circ \varphi_s(p)\, ds - q_{(n)}^d\int_{\Omega_{\bar{q}}} f\, d\mu \right|\leq \mathrm{Var}(f)$$
  for all $n>0$.
\end{theorem}
\begin{remark}
  It seems reasonable to conjecture that a Denjoy-Koksma inequality holds for any tiling space $\Omega_x$ obtained from compatible and uniformly expanding substitutions with $AF(\mathcal{B}^+_x)$ is UHF (see \cite[\S III.5]{davidson:book}). It seems like for $d=1$ the proof below can be combined with the usual intertwining arguments to give a proof.
\end{remark}
\begin{proof}
  Let $X^+_{\bar{q}}:=X^+_{\mathcal{B}_{\bar{q}}}$. Since any substitution in the family $\mathcal{F}$ forces the border, the map $\Delta_{\bar{q}}:X^+_{\bar{q}}\rightarrow \mho_{\bar{q}}$ is a homeomorphism of Cantor sets. As such, the topology of $\mho_{\bar{q}}$ is generated by the image of cylinder subsets of $X^+_{\bar{q}}$ under the map $\Delta_{\bar{q}}$, and the ultrametric structure of $\mho_{\bar{q}}$ is inherited from that of $X_{\bar{q}}$. As such, for every $k>0$, there are $q_{(k)}^{d}$ pairwise-disjoint cylinder sets $\mathcal{C}^k_i\subset \mho_{\bar{q}}$, parametrized by $ i\in\{1,\dots, q_{(k)}\}^d$, one for each path $p\in \mathcal{E}_{0,k}$, whose union is $\mho_{\bar{q}}$. Moreover, since it is well known that $X_{\mathcal{B}_{\bar{q}}}^+$ admits a unique tail-invariant Borel probability measure, by Proposition \ref{prop:measBij}, we have that $\mathrm{diam}\,\mathcal{C}^k_i =  \nu(\mathcal{C}^k_i) = q^{-d}_{(k)}$ for all $i$ for the unique holonomy-invariant measure $\nu$ on $\mho_{\bar{q}}$.

  For any $\bar{e} = (\bar{e}^-,\bar{e}^+)\in X_{\bar{q}}$, the $k^{th}$ approximant $\mathcal{P}_k(\bar{e}^+)$ is a tiled cube of side length $q_{(k)}$ containing the origin, and it is tiled by $q_{(k)}^d$ tiles isometric to $[0,1]^d$. For $\mathcal{T}_{\bar{e}} = \Delta_{\bar{q}}(\bar{e})\in \Omega_{\bar{q}}$ there exists a vector $\tau_{\bar{e}}\in \left[-\frac{1}{2},\frac{1}{2}\right]^d$ such that $\varphi_{\tau_{\bar{e}}}(\mathcal{T}_{\bar{e}})\in \mho_{\bar{q}}$. By Lemma \ref{lemma:deform}, there exist $q_{(k)}^d$ vectors $\tau_1,\dots, \tau_{q_{(k)}^d}$ such that $\varphi_{\tau_{i}}(\mathcal{T}_{\bar{e}})\in \mathcal{C}^k_i$. In other words, the points $\{\varphi_{\tau_{i}}(\mathcal{T}_{\bar{e}})\}_i$ $q_{(k)}^{-d}$-equidistribute in $\mho_{\bar{q}}$.

  In fact, more is true: the vectors $\tau_1,\dots, \tau_{q_{(k)}^d}$ can be chosen to be nice elements of $\mathbb{Z}^d$. In particular, one can choose them to be the elements of the set $\{0,\dots, q_{(k)}-1\}^d$. First, note that for any $s\in \{0,\dots, q_{(k)}-1\}^d$, $\varphi_{s+\tau_{\bar{e}}} (\mathcal{T}_{\bar{e}})\in \mho_{\bar{q}}$. This follows from the fact that there is a single prototile (a unit cube) in the tiling and its center is the puncture. Thus, since $\varphi_{\tau_{\bar{e}}}(\mathcal{T}_{\bar{e}})\in \mho_{\bar{q}}$, it follows that $\varphi_{s+\tau_{\bar{e}}} (\mathcal{T}_{\bar{e}})\in \mho_{\bar{q}}$. Moreover, for any $s\neq s'\in \{1,\dots, q_{(k)}\}^d$, it follows that $\Delta_{\bar{q}}^{-1}(\varphi_{s+\tau_{\bar{e}}} (\mathcal{T}_{\bar{e}}))|_k \neq \Delta_{\bar{q}}^{-1}(\varphi_{s'+\tau_{\bar{e}}} (\mathcal{T}_{\bar{e}}))|_k$, so they $q_{(k)}^{-d}$-equidistribute in $\mho_{\bar{q}}$.
 
  Now recall the proof of the classical Denjoy-Koksma inequality now for the dynamics restricted to $\mho_{\bar{q}}$ \cite[Th\'eor\`eme VI.3.1]{herman:cercle}. For $\bar{e}\in\mho_{\bar{q}}$:
  \begin{equation}
    \label{eqn:DKtransv}
    \begin{split}
   &   \left| \sum_{i\in \left\{0,\dots, q_{(k)}-1\right\}^d} f\circ\varphi_i(\mathcal{T}_{\bar{e}}) -q_{(k)}^{d}\int_{\mho_{\bar{q}}}f\, d\nu \right|= \left|\sum_{i\in \left\{0,\dots, q_{(k)}-1\right\}^d}  \frac{1}{\nu(\mathcal{C}^k_i)}\int_{\mathcal{C}^k_i}\left(f\circ \varphi_i(\mathcal{T}_{\bar{e}}) - f(x)\right)\, d\nu (x) \right| \\
      &\hspace{.4in}\leq \sum_{i\in \left\{0,\dots, q_{(k)}-1\right\}^d}  \frac{1}{\nu(\mathcal{C}^k_i)}\int_{\mathcal{C}^k_i}\left|f\circ \varphi_i(\mathcal{T}_{\bar{e}}) - f(x)\right|\, d\nu (x) \leq \sum_{i\in \left\{0,\dots, q_{(k)}-1\right\}^d}\sup_{y,z\in \mathcal{C}^k_i}|f(y) - f(z)| \\
      &\hspace{.4in}\leq \sum_{i\in \left\{0,\dots, q_{(k)}-1\right\}^d}\mathrm{Var}(f,\mathcal{C}^k_i)\leq \mathrm{Var}_{q^{-d}_{(k)}}(f).
    \end{split}
  \end{equation}

  Up to this point everything has been done with reference to the transversal at zero $\mho_{\bar{q}}$. It turns out that for every point $y\in [0,1)^d$ there is an associated transversal $\mho^y_{\bar{q}}\subset \Omega_{\bar{q}}$ obtained by translating $\mho_{\bar{q}} = \mho^0_{\bar{q}}$ by $y$. By composition with this translation the map $\Delta_{\bar{q}}$ is a homeomorphism between $X^+_{\bar{q}}$ and $\mho_{\bar{q}}^y$, and so for any $k>0$ there is a partition $\{\mathcal{C}^k_i(y)\}_i$ of $\mho_{\bar{q}}^y$ by $q^d_{(k)}$ cylinder sets of measure $\nu_y(\mathcal{C}^k_i(y)) = q^{-d}_{(k)}$, where the measure $\nu_y$ on $\mho^y_{\bar{q}}$ is the translate of the measure $\nu$ on $\mho_{\bar{q}}$. Thus the same arguments leading to (\ref{eqn:DKtransv}) hold for the transversal $\mho^y_{\bar{q}}$ and so it follows that for $\mathcal{T}_{\bar{e}}\in \mho^y_{\bar{q}}$
    \begin{equation}
      \label{eqn:DKtransv2}
      \left| \sum_{i\in \left\{0,\dots, q_{(k)}-1\right\}^d}f\circ\varphi_i(\mathcal{T}_{\bar{e}}) -q_{(k)}^{d}\int_{\mho_{\bar{q}}^y}f\, d\nu_y \right|\leq \mathrm{Var}_{q_{(k)}^{-d}}(f).
    \end{equation}
Finally, note that if $p\in \Omega_{\bar{q}}$ and $y\in [0,1)^d$ such that $p\in \mho^y_{\bar{q}}$ then
    $$\int_{[0,q_k]^d} f\circ \varphi_s(p)\, ds = \int_{\left[0,1\right]^d}   \sum_{i\in \left\{0,\dots, q_{(k)}-1\right\}^d}f\circ\varphi_{i+s}(p)\, ds.$$
Putting it all together, let $p\in \Omega_{\bar{q}}$ and $y\in [0,1)^d$ such that $p\in \mho^y_{\bar{q}}$. Then:
    \begin{equation}
      \begin{split}
&        \left| \int_{[0,q_k]^d} f\circ \varphi_s(p)\, ds - q_k^d\int_{\Omega_{\bar{q}}} f\, d\mu\right| \\
        &\hspace{1in}= \left| \int_{\left[0,1\right]^d}   \sum_{i\in \left\{0,\dots, q_{(k)}-1\right\}^d}f\circ\varphi_{i+s}(p)\, ds -q_{(k)}^{d}\int_{\left[0,1\right]^d}\int_{\mho_{\bar{q}}^{\varphi_s(y)}}f\, d\nu_{\varphi_s(y)}\, ds \right| \\
        &\hspace{1in}\leq  \int_{\left[0,1\right]^d} \left|   \sum_{i\in \left\{0,\dots, q_{(k)}-1\right\}^d}f\circ\varphi_{i+s}(p)\, -q_{(k)}^{d}\int_{\mho_{\bar{q}}^{\varphi_s(y)}}f\, d\nu_{\varphi_s(y)} \right|ds  \\
        &\hspace{1in}\leq \int_{\left[0,1\right]^d}\mathrm{Var}_{q_{(k)}^{-d}}(f) \, ds = \mathrm{Var}_{q_{(k)}^{-d}}(f)\leq \mathrm{Var}(f).
      \end{split}      
    \end{equation}
\end{proof}
\section{Random Schr\"odinger operators}
\label{sec:schrod}
This section will focus on applications of the results of \S \ref{sec:erg} to algebras of operators coming from the tiling spaces obtained by collections of substitution rules. Although it is natural in such cases to focus on the $C^*$-algebras of operators obtained, here the focus is on $*$-algebras which are dense in the $C^*$-algebras of usual interest. This is because the traces obtained are only densely defined and one loses all but one trace by going to the completion $C^*$-algebras. This is mentioned for the curious reader wondering how one completes the algebras constructed; it is not relevant for the work here. However, the reader can see, for example, \cite{bellissard:K} for how the use of operator algebras enters the study of aperiodic media from a mathematical physics point of view; see also \cite{KP:survey, LS:algebras} for several uses of $C^*$-algebras in the study of tilings. The $*$-algebras used here will be dense subalgebras of the ones used in \cite{LS:algebras}.

For a family $\mathcal{F}$ of uniformly expanding and compatible substitutions defined on the same set of prototiles and $x\in X_\mathcal{F}$ let $\mathcal{B}_x$ be the associated Bratteli diagram as constructed in \S \ref{sec:brat} and assume $\mathcal{B}_x$ is minimal. Recall that by construction, any tile $t$ on any tiling $\mathcal{T}\in\Omega$ has a distinguished point in its interior, and they correspond to the placement of the origin inside of the prototiles $\{t_1,\dots, t_M\}$. These distinguished points are called \textbf{punctures} in \cite{kellendonk:gap}. Once the puntures have been chosen in the interior of the prototiles, there exists a $\varrho>0$ such that any ball of radius less than $\varrho$ centered at the puncture of a tile $t\in\mathcal{T}\in\Omega_x$ does not intersect the boundary of $t$, and this holds for all $x\in X_\mathcal{F}$ and $\mathcal{T}\in \Omega_x$. Let $\Lambda_\mathcal{T}$ be the set of punctures of $\mathcal{T}$, that is, the union of all distinguished points of all tiles of $\mathcal{T}$ and define
      $$ \mathcal{G}_{x} := \left\{(p,\mathcal{T}',q)\in \mathbb{R}^d\times \Omega_x\times\mathbb{R}^d: p,q\in\Lambda_{\mathcal{T}'}\right\}.$$
      \begin{definition}
        A kernel of finite range is a function $k\in C(\mathcal{G}_x)$ such that
        \begin{enumerate}
        \item $k$ is bounded;
        \item $k$ has finite range. In other words there is a $R_k>0$ such that $k(p,\mathcal{T},q) = 0$ whenever $|p-q|>R_k$;
        \item $k$ is $\mathbb{R}^d$-invariant:  $k(p-t,\varphi_t(\mathcal{T}),q-t) = k(p,\mathcal{T},q)$ for any $t\in\mathbb{R}^d$.
        \end{enumerate}
      \end{definition}
      The set of all kernels of finite range associated to $\Omega_x$ are denoted by $\mathcal{K}_x^{fin}$. For any $k\in\mathcal{K}_x^{fin}$ there is a family of representations $\{\pi_\mathcal{T}\}_{\mathcal{T}\in\Omega_x}$ in $\mathcal{B}(\ell^2(\Lambda_\mathcal{T}))$ defined, for $k\in\mathcal{K}_x^{fin}$ by
      $$\langle K_\mathcal{T} \delta_p,\delta_q\rangle = \langle(\pi_\mathcal{T}k)\delta_p,\delta_q\rangle = k(p,\mathcal{T},q).$$
      The family $\{K_\mathcal{T}\}$ parametrized by $\Omega_x$ is bounded in the product $\prod_{\mathcal{T}\in\Omega_x}\mathcal{B}(\ell^2(\Lambda_\mathcal{T}))$. Defining a convolution product as
      $$(a\cdot b)(p,\mathcal{T},q) = \sum_{x\in \Lambda_\mathcal{T}}a(p,\mathcal{T},x)b(x,\mathcal{T},q)$$
      and involution by $k^*(p,\mathcal{T},q)=\overline{k(q,\mathcal{T},p)}$, $\mathcal{K}_x^{fin}$ has the structure of a $*$-algebra. It follows that the map $\pi:\mathcal{K}_x^{fin}\rightarrow \prod_{\mathcal{T}}\mathcal{B}(\ell^2(\Lambda_\mathcal{T}))$ is a faithful $*$-representation. The image is denoted by $\mathcal{A}_x^{fin}$ and it is the algebra of operators of finite range. The completion of this algebra is denoted by $\mathcal{A}_x$.

      \begin{definition}
        The set of \textbf{Lipschitz kernels} of finite range consists of kernels $k\in\mathcal{K}_x^{fin}$ for which there are constants $R_k, L_k>0$ such that if for two $\mathcal{T}_1,\mathcal{T}_2\in\Omega_x$ one has that $B_{R_k}(0)\cap \Lambda_{\mathcal{T}_1} = B_{R_k}(0)\cap \Lambda_{\mathcal{T}_2}$ then for any $p,q\in B_{R_k}(0)\cap \Lambda_{\mathcal{T}_1}$ one has that $|k(p,\mathcal{T}_1,q)-k(p,\mathcal{T}_2,q)|\leq L_k d(\mathcal{T}_1,\mathcal{T}_2)$.
      \end{definition}
      The kernels in the above definition carry the label \emph{Lipschitz} since they will be connected to Lipschitz functions on the tiling space $\Omega_x$; see Lemma \ref{lem:lipMap} below.
      
      The set of Lipschitz kernels of finite range is denoted by $\mathcal{LK}_x^{fin}\subset \mathcal{K}_x^{fin}$. The image of $\mathcal{LK}_x^{fin}$ is denoted by $\mathcal{LA}_x^{fin} = \pi\mathcal{LK}_x^{fin}\subset \mathcal{A}_x^{fin}$ and it is the set of \textbf{Lipschitz operators of finite range}. It should be pointed out that most operators of interest in mathematical physics, such as operators of the form\footnote{A simple example to consider in one dimension is as follows. Let $\mathcal{T}$ be a tiling of $\mathbb{R}$ by $N$ different tile types and $\Lambda_\mathcal{T}$ the collection of endpoints of tiles of $\mathcal{T}$. There is an obvious, order-preserving labeling of $\Lambda$ by $\mathbb{Z}$. For $i\in\{1,\dots, N\}$ and $\lambda\neq 0$, consider the operator $H_{i,\lambda} = \triangle + \lambda V_i$, where $\triangle$ is the discrete Laplacian on $\mathbb{Z}$ and the localized potential $V_i$ is defined by $V_i(p) =  p$ if $p\in \Lambda$ is a left endpoint of a tile of type $i$, and otherwise $V_i(p)=0$. Similar constructions can be made for tilings in higher dimensions by considering the graph $\mathcal{G}_\mathcal{T}$ given by the Delaunay triangulation of $\Lambda_\mathcal{T}$, considering the graph Laplacian $\triangle_{\mathcal{G}_\mathcal{T}}$ on $\mathcal{G}_\mathcal{T}$ and considering an operator of the form $H = \triangle_{\mathcal{G}_\mathcal{T}} + V$, where $V$ is a localized potential depending only on the local pattern aroung $p\in\Lambda_\mathcal{T}$.} $H = \triangle + V$, where $V$ is a ``localized'' potential on defined on $\mathcal{T}$, are contained in the set $\mathcal{LA}_x^{fin}$.

      Let $u:\mathbb{R}^d\rightarrow \mathbb{R}$ be a smooth non-negative (bump) function of integral 1, compactly supported in a disk of radius less than $\varrho$. This defines a family of functions $w_{u,\mathcal{T}}:\mathcal{LA}_{x}^{fin}\rightarrow C^\infty(\mathbb{R}^d)$ parametrized by $\Omega_x$ as follows. For $A = \pi k \in \mathcal{A}_x^{fin}$ and $A_\mathcal{T} = \pi_\mathcal{T}k\in\mathcal{B}(\ell^2(\Lambda_\mathcal{T}))$, let $f_{A_\mathcal{T}}^u$ be defined by
      $$f_{A_\mathcal{T}}^u(t) = w_{u,\mathcal{T}}(A)(t) = \sum_{p\in\Lambda_\mathcal{T}}A_{\mathcal{T}}(p,p) u(p-t).$$
      \begin{lemma}
        \label{lem:lipMap}
        For any $A\in \mathcal{L}\mathcal{A}_x^{fin}$ and $\mathcal{T}\in\Omega_x$ there exists a Lipschitz function $h = h_A^u\in\mathcal{L}(\Omega_x)$ such that $f_{A_\mathcal{T}}^u(t) = h\circ\varphi_t(\mathcal{T})$.
      \end{lemma}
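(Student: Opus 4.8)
The plan is to realize $h$ as the evaluation at the origin of the obvious transverse function. Writing $A=\pi k$ with $k\in\mathcal{LK}_x^{fin}$, I would set
\[
  h(\mathcal{T}') \;:=\; \sum_{p\in\Lambda_{\mathcal{T}'}} k(p,\mathcal{T}',p)\,u(p),\qquad \mathcal{T}'\in\Omega_x .
\]
First I would check this is well defined: $\Lambda_{\mathcal{T}'}$ is a Delone set and $u$ is compactly supported, so the sum is finite; in fact the balls $B_\varrho(p)$, $p\in\Lambda_{\mathcal{T}'}$, lie in the pairwise disjoint interiors of distinct tiles, so the punctures are $2\varrho$-separated, and since $\operatorname{supp}u$ has diameter $<2\varrho$ there is at most one nonzero term. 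Equivariance is then a one-line computation: using $\Lambda_{\varphi_t(\mathcal{T})}=\Lambda_{\mathcal{T}}-t$ and the $\mathbb{R}^d$-invariance $k(p-t,\varphi_t(\mathcal{T}),p-t)=k(p,\mathcal{T},p)$, the substitution $q=p-t$ gives
\[
  h(\varphi_t(\mathcal{T}))=\sum_{p\in\Lambda_{\mathcal{T}}} k(p-t,\varphi_t(\mathcal{T}),p-t)\,u(p-t)
  =\sum_{p\in\Lambda_{\mathcal{T}}} A_{\mathcal{T}}(p,p)\,u(p-t)=f^u_{A_{\mathcal{T}}}(t).
\]

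The substance of the lemma is that $h\in\mathcal{L}(\Omega_x)$, and this I would extract from two ingredients. First, $h$ is uniformly Lipschitz along $\mathbb{R}^d$-orbits: $t\mapsto f^u_{A_{\mathcal{T}}}(t)$ is smooth with gradient bounded by $\|k\|_\infty\|\nabla u\|_\infty$ times the number of punctures in a translate of $\operatorname{supp}u$, which is $\le 1$, so there is a constant $L_0$ independent of $\mathcal{T}$ with $|f^u_{A_{\mathcal{T}}}(t)-f^u_{A_{\mathcal{T}}}(t')|\le L_0\|t-t'\|$. Second, the Lipschitz-kernel hypothesis makes $\mathcal{T}'\mapsto k(p,\mathcal{T}',p)$ ``almost transversally locally constant'': if the puncture patterns of $\mathcal{T}_1$ and $\mathcal{T}_2$ agree on a ball $B_R(0)$ with $R\ge R_k$, then after recentering at a common puncture near $0$ via the $\mathbb{R}^d$-invariance, the defining estimate for $k$ bounds the difference of the diagonal values by $L_k/R$. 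Now let $\mathcal{T}_1,\mathcal{T}_2\in\Omega_x$ with $d(\mathcal{T}_1,\mathcal{T}_2)=\varepsilon<1$, and pick $\|s\|\le\varepsilon$ with $\mathcal{O}^-_{\mathcal{T}_1}(B_{1/\varepsilon})=\mathcal{O}^-_{\varphi_s(\mathcal{T}_2)}(B_{1/\varepsilon})$. Split
\[
  |h(\mathcal{T}_1)-h(\mathcal{T}_2)|\le |h(\mathcal{T}_1)-h(\varphi_s(\mathcal{T}_2))|+|h(\varphi_s(\mathcal{T}_2))-h(\mathcal{T}_2)| ;
\]
the second term is $\le L_0\varepsilon$ by the orbit estimate. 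For the first, agreement of the two patches on $B_{1/\varepsilon}(0)$ forces $\Lambda_{\mathcal{T}_1}$ and $\Lambda_{\varphi_s(\mathcal{T}_2)}$ to coincide on $B_{1/\varepsilon-D}(0)$ with $D$ the maximal prototile diameter, so for $\varepsilon$ small the unique puncture of either tiling in $\operatorname{supp}u$ (if present) is the same point $p$, the puncture patterns of $\varphi_p(\mathcal{T}_1)$ and $\varphi_p(\varphi_s(\mathcal{T}_2))$ agree on a ball of radius $R\ge\tfrac{1}{2\varepsilon}$, and the Lipschitz-kernel estimate yields $|h(\mathcal{T}_1)-h(\varphi_s(\mathcal{T}_2))|\le\|u\|_\infty L_k/R\le 2\|u\|_\infty L_k\,\varepsilon$. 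Hence $|h(\mathcal{T}_1)-h(\mathcal{T}_2)|\le C\varepsilon$ for small $\varepsilon$; for $\varepsilon$ bounded away from $0$ the inequality is automatic since $|h|\le\|u\|_\infty\|k\|_\infty$, so $h$ is Lipschitz.

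The main obstacle I anticipate is precisely the continuity step, and within it the point that the Lipschitz-kernel bound must be invoked at the scale $R$ on which the puncture patterns genuinely agree — so that it decays like $1/R\sim\varepsilon$ rather than being a fixed constant $L_k/R_k$ — together with the routine but necessary bookkeeping translating ``the two tilings agree on $B_R(0)$'' into ``their puncture sets agree on $B_{R-D}(0)$'' and then recentering at the active puncture before applying the hypothesis. Everything else (well-definedness, equivariance, the orbit-direction Lipschitz bound) is immediate from the definitions and the Delone property of $\Lambda_{\mathcal{T}}$.
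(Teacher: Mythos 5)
Your proposal is correct and is essentially the paper's argument: the paper defines the same function through its value $k(0,\mathcal{T},0)u(0)$ on the canonical transversal, obtains transverse Lipschitz continuity from the Lipschitz-kernel estimate applied at the scale $\sim 1/\varepsilon$ on which the two tilings agree (precisely the point you flag), and extends along the $\mathbb{R}^d$-direction using the bump function via a product chart, with the identity $f^u_{A_\mathcal{T}}(t)=h\circ\varphi_t(\mathcal{T})$ coming from the $\mathbb{R}^d$-invariance of $k$. Your global formula $h(\mathcal{T}')=\sum_{p\in\Lambda_{\mathcal{T}'}}k(p,\mathcal{T}',p)u(p)$ simply writes that extension down directly, and your longitudinal/transversal splitting of the Lipschitz estimate is the same mechanism, carried out in somewhat more detail than in the paper.
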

      \begin{proof}
The assignment $\mathcal{T}\mapsto f^u_{A_\mathcal{T}}(0)$ defines a function $f^u_{A_\cdot}(0): \mho_x\rightarrow\mathbb{R}$. For $\mathcal{T},\mathcal{T}'\in\mho_x$ one then has for any $k\in \mathcal{LK}_x^{fin}$
        $$\left|f^u_{A_\mathcal{T}}(0) - f^u_{A_{\mathcal{T}'}}(0)\right| = \left| k(0,\mathcal{T},0) - k(0,\mathcal{T}',0) \right|u(0)\leq u(0)L_k d(\mathcal{T}_1,\mathcal{T}_2),$$
so this is a Lipschitz function on $\mho_x$ with Lipschitz constant $L_ku(0)$.

The function $\mathcal{T}\mapsto f^u_{A_\cdot}(0)$ can be extended to $\Omega_x$ by choosing a neighborhood $U$ of $\mho_x$ of size $r_u$ and a product chart $\phi_u:U\rightarrow B_{r_u}(0)\times \mho_x$ and noting that the function defined by $h = \phi_u^*\bar{u}$, where $\bar{u}(t,\mathcal{T}) = f^u_{A_\mathcal{T}}(0)u(t)$ with $\|t\|<r_u$, defines a Lipschitz function on $\Omega_x$. That this gives $f^u_{A_\mathcal{T}}(t) = h\circ \varphi_t(\mathcal{T})$ follows from the $\mathbb{R}^d$ invariance of the kernel $k$ used to define $A$.
      \end{proof}
      Let $\mathcal{M}_u:\mathcal{LA}_x^{fin}\rightarrow \mathcal{L}(\Omega_x)$ be the map given by Lemma \ref{lem:lipMap} and denote the composition $\Upsilon_{u,\Gamma} := j^+_x\circ i^+_\Gamma \circ \mathcal{M}_u:\mathcal{LA}_x^{fin}\rightarrow LF(\mathcal{B}^+_x)$. We can define functionals $\tau'_i:\mathcal{LA}_x^{fin}\rightarrow \mathbb{C}$ by pullback $\tau_i' = \Upsilon_{u,\Gamma}^*\tau_i^+$, i.e., $\tau_i'(A) = \tau_i^+(\Upsilon_{u,\Gamma}(A))$, for $A\in \mathcal{LA}_x^{fin}$, where $\tau_i^+\in T^+_i(x)$. The functionals $\tau'_i$ may or may not be traces. By \cite[Proposition 1]{ST:traces}, we know some cases when they are.
      \begin{proposition}
        \label{prop:pullTrace}
         Let $\mathcal{F}_1,\dots, \mathcal{F}_N$ be a collection of uniformly expanding and compatible substitution rules on a set of prototiles $t_1,\dots, t_M$ and $\mu$ a minimal, $\sigma$-invariant ergodic Borel probability measure on $X_\mathcal{F}$. Then for $\mu$-almost every $x$, for a spanning system of patches $\Gamma$ on $\Omega_x$, the functional $\tau'_i = \Upsilon_{u,\Gamma}^*\tau_i^+$ is a trace if $\frac{\lambda_i^+}{\lambda_1^+}>\frac{d-1}{d}$. So $\Upsilon_{u,\Gamma}$ induces a map on traces
         $$ \Upsilon_{u,\Gamma}^* :\tr (\mathcal{B}^+_x)^{++}\rightarrow \tr(\mathcal{LA}_x^{fin}),$$
         where $\tr (\mathcal{B}^+_x)^{++}$ is the subspace of $\tr (\mathcal{B}^+_x)$ generated by traces $\tau_i^+$ which satisfy $\frac{\lambda_i^+}{\lambda_1^+}>\frac{d-1}{d}$.
       \end{proposition}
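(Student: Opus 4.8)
The plan is to show that, for $\mu$-almost every Oseledets-regular and Poincar\'e-recurrent $x$ and every index $i$ with $\lambda_i^+/\lambda_1^+>(d-1)/d$, the (manifestly linear) functional $\tau_i'=\Upsilon_{u,\Gamma}^*\tau_i^+$ annihilates every commutator $AB-BA$ with $A,B\in\mathcal{LA}_x$. Since $\mathcal{LA}_x$ is a $*$-algebra, $\tau_i'(AB-BA)=0$ for all such $A,B$ is exactly the trace identity $\tau_i'(AB)=\tau_i'(BA)$, and this lets $\Upsilon_{u,\Gamma}^*$ descend to a linear map $\tr(\mathcal{B}^+_x)^{++}\to\tr(\mathcal{LA}_x)$. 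The route follows the scheme of \cite[Proposition 1]{ST:traces}; the new ingredient is that the special averaging sets of Theorem \ref{thm:main} supply an exact asymptotic.

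First I would express a commutator as a small ergodic integral. Fix $\mathcal{T}\in\Omega_x$, fix $A,B\in\mathcal{LA}_x$, and set $g:=\mathcal{M}_u(AB-BA)\in\mathcal{L}(\Omega_x)$, so that $\Upsilon_{u,\Gamma}(AB-BA)=j_x^+(i^+_\Gamma(g))=[a_g]$. By Lemma \ref{lem:lipMap} and the linearity of $\mathcal{M}_u$,
\[
g\circ\varphi_t(\mathcal{T})=\sum_{p\in\Lambda_\mathcal{T}}\bigl((A_\mathcal{T}B_\mathcal{T})(p,p)-(B_\mathcal{T}A_\mathcal{T})(p,p)\bigr)u(p-t).
\]
Integrating over a good Lipschitz domain $D$ and using that $u$ has integral $1$ with support of radius $r_u<\varrho$, the weight $\int_D u(p-s)\,ds$ equals $1$ for punctures well inside $D$, vanishes for punctures well outside, and lies in $[0,1]$ only for punctures within $r_u$ of $\partial D$; since $\Lambda_\mathcal{T}$ is uniformly discrete and relatively dense (finite local complexity) and $\partial D$ is $(d-1)$-rectifiable of finite measure there are only $O(\mathcal{H}^{d-1}(\partial D))$ such boundary punctures, whence
\[
\int_D g\circ\varphi_s(\mathcal{T})\,ds=\mathrm{tr}\bigl((AB)_\mathcal{T}|_D\bigr)-\mathrm{tr}\bigl((BA)_\mathcal{T}|_D\bigr)+O\bigl(\mathcal{H}^{d-1}(\partial D)\bigr).
\]
Next I would observe that the finite-volume commutator trace is also a boundary term: expanding $\mathrm{tr}((AB)_\mathcal{T}|_D)=\sum_{p\in\Lambda_\mathcal{T}\cap D}\sum_{q\in\Lambda_\mathcal{T}}A_\mathcal{T}(p,q)B_\mathcal{T}(q,p)$ and the analogue for $BA$, and relabelling $p\leftrightarrow q$ in the latter, the difference collapses to the two sums over pairs $(p,q)$ with exactly one of $p,q$ in $D$; finite range of $A,B$ forces $|p-q|\le\min(R_A,R_B)$, so both punctures lie within a bounded distance of $\partial D$, there are $O(\mathcal{H}^{d-1}(\partial D))$ surviving pairs, each term is bounded by $\|A\|\,\|B\|$, and hence $\bigl|\mathrm{tr}((AB)_\mathcal{T}|_D)-\mathrm{tr}((BA)_\mathcal{T}|_D)\bigr|=O(\mathcal{H}^{d-1}(\partial D))$. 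Combining, $\bigl|\int_D g\circ\varphi_s(\mathcal{T})\,ds\bigr|=O(\mathcal{H}^{d-1}(\partial D))$ for every good Lipschitz domain $D$.

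Then I would close the argument by contradiction. Suppose $\tau_i'(AB-BA)=\tau_i^+([a_g])\ne 0$ and let $j\le i$ be the least index with $\tau_j^+([a_g])\ne 0$. Fix a good Lipschitz domain $B_0$ with nonempty interior and apply Theorem \ref{thm:main}, bound (\ref{eqn:uppBnd2}), with $f=g$: for each $\varepsilon>0$ there are a set $B_{0,\varepsilon}$ that is $\varepsilon$-close to $B_0$ in the Hausdorff metric (and, being a scaled union of tiles, has $(d-1)$-rectifiable boundary of finite measure), a sequence $T_k\to\infty$, and vectors $\tau_k$ with
\[
\limsup_{k\to\infty}\frac{\log\bigl|\int_{T_k(B_{0,\varepsilon}+\tau_k)}g\circ\varphi_s(\mathcal{T})\,ds\bigr|}{\log T_k}=d\,\frac{\lambda_j^+}{\lambda_1^+}.
\]
On the other hand $\mathcal{H}^{d-1}\bigl(\partial(T_k(B_{0,\varepsilon}+\tau_k))\bigr)\asymp T_k^{d-1}$, so the estimate of the previous step forces this $\limsup$ to be at most $d-1$; but $\lambda_j^+\ge\lambda_i^+$ and $\lambda_i^+/\lambda_1^+>(d-1)/d$ give $d\lambda_j^+/\lambda_1^+>d-1$, a contradiction. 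Therefore $\tau_i'(AB-BA)=0$ for all $A,B\in\mathcal{LA}_x$, so $\tau_i'$ is a trace. The step I expect to be the main obstacle is making the two boundary estimates uniform along the dilating family $T_k(B_{0,\varepsilon}+\tau_k)$, i.e.\ bounding both the gap between the smoothed integral and the honest finite-volume trace and the commutator trace itself by $O(\mathcal{H}^{d-1}(\partial D))$ with constants that do not degenerate as $D$ runs through this family; this rests on the finite range of the Lipschitz kernels, the uniform discreteness and relative denseness of the puncture sets coming from finite local complexity, and a uniform Minkowski-content estimate for the $r$-neighbourhoods of the rectifiable, finite-measure boundaries $\partial(T_k(B_{0,\varepsilon}+\tau_k))$.
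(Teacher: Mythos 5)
Your argument is correct in outline, but it is worth noting that the paper does not prove this proposition in-house at all: it simply invokes \cite[Proposition 1]{ST:traces}, which is exactly the statement that the pulled-back functionals are traces above the exponent threshold $\frac{d-1}{d}$. What you have done is reconstruct the content of that citation inside the present paper's machinery, and your two key estimates are precisely the mechanism behind the cited result: (a) the smoothed ergodic integral of $\mathcal{M}_u(AB-BA)$ over a region $D$ differs from $\mathrm{tr}\bigl((AB-BA)_\mathcal{T}|_D\bigr)$ only by a boundary term (this is the paper's (\ref{eqn:traceId})), and (b) the restricted trace of a commutator of finite-range operators is itself a boundary term, by the relabelling $p\leftrightarrow q$ and finite range. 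Where you diverge is in how you convert ``the commutator integrates to $O(T^{d-1})$'' into vanishing of $\tau_i^+\bigl([a_g]\bigr)$: rather than extracting the coefficient directly from the deviation expansion (the route of \cite{ST:traces}), you run a contradiction through the sharp asymptotics (\ref{eqn:uppBnd2}) along the special averaging sets of Theorem \ref{thm:main}. That is legitimate, since Theorem \ref{thm:main} is proved in \S\ref{sec:erg}--\ref{subsec:lowBnds} independently of this proposition, and $\lambda_j^+\geq\lambda_i^+$ for the least nonvanishing index $j\leq i$ gives $d\lambda_j^+/\lambda_1^+>d-1$, incompatible with a $\limsup\leq d-1$. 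It buys you a self-contained proof at the cost of invoking the full lower-bound construction where a one-line coefficient comparison in (\ref{eqn:ergInt1})--(\ref{eqn:lowBnd}) would also do.

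On the technical caveat you flag (uniformity of the boundary estimates along $T_k(B_{0,\varepsilon}+\tau_k)$): this is at the same level of rigor as the paper's own uses of $O(\partial(T\cdot B))$, and for the special sets it can largely be sidestepped, since by construction $T_k(B_{0,\varepsilon}+\tau_k)=\mathcal{O}^-_\mathcal{T}(E_k)$ is an exact union of (super)tiles, so the smoothing error in (a) is exactly zero by the first line of (\ref{eqn:traceId}), and only the commutator estimate (b) needs the count of punctures within a fixed range $R$ of $\partial\bigl(T_k(B_{0,\varepsilon}+\tau_k)\bigr)$, which scales as $T_k^{d-1}\mathcal{H}^{d-1}(\partial B_{0,\varepsilon})$ with a constant controlled by uniform discreteness of $\Lambda_\mathcal{T}$ and the fixed set $B_{0,\varepsilon}$. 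You should also record the small closure facts you use implicitly: that $\mathcal{LA}_x$ is closed under products (finite range adds, and the Lipschitz kernel estimate propagates), and that $i^+_\Gamma$ is linear (uniqueness of the Bufetov shadowing vector in $E^+_x$), so that $\tau_i'(AB)-\tau_i'(BA)=\tau_i^+([a_g])$ with $g=\mathcal{M}_u(AB-BA)$.
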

       \subsection{Proof of Theorem \ref{thm:opDevs}}
       Let $d^{++}_\mu$ be the dimension of the subspace $\tr (\mathcal{B}^+_x)^{++}$. Define the $d^{++}_\mu$ traces in $\tr(\mathcal{LA}_x^{fin})$ to be $\{\tau_1,\dots , \tau_{d^{++}_\mu}\}$, where $\tau_i\in \Upsilon_{u,\Gamma}^*T^+_i(x)$ is any non-zero element. Now pick $A_\mathcal{T}\in \mathcal{LA}_x^{fin}$, a good Lipschitz domain $B$ and $T>0$.
       First, note that for two smooth bump functions $u,u'$ of compact support in a ball of radius less than $\rho$ and integral 1, it follows that
       \begin{equation}
         \label{eqn:bumpInd}
         \begin{split}
           &\left|\int_{\mathcal{O}^-_\mathcal{T}(E)}\mathcal{M}_u A\circ \varphi_t(\mathcal{T})- \mathcal{M}_{u'} A\circ \varphi_t(\mathcal{T}) \, dt \right| = 0,\hspace{.4in}\mbox{ and }\\
& \hspace{1.5in}          \left|\int_{E}\mathcal{M}_u A\circ \varphi_t(\mathcal{T})- \mathcal{M}_{u'} A\circ \varphi_t(\mathcal{T}) \, dt \right| = O(|\partial E|).
           \end{split}
       \end{equation}
       for any measurable $E$ of finite volume.
       In addition, it follows that
       \begin{equation}
         \label{eqn:traceId}
         \begin{split}
         \left|\mathrm{tr}\left(A_\mathcal{T}|_{\mathcal{O}^-_\mathcal{T}(E)}\right) - \int_{\mathcal{O}^-_\mathcal{T}(E)}  \mathcal{M}_u A\circ \varphi_t(\mathcal{T})\, dt\right| &= 0\\
         \left|\mathrm{tr}(A_\mathcal{T}|_{E}) - \int_{E}  \mathcal{M}_u A\circ \varphi_t(\mathcal{T})\, dt\right| &= O(|\partial E|).
         \end{split}
       \end{equation}
for any measurable set $E$ of finite volume,  where the second estimate is from \cite[Equation (22)]{ST:traces}. Thus, if $\tau_i(A) = 0$ for all $i=1,\dots, r$ for some $r< d^{++}_\mu$ but $\tau_r(A)\neq 0$, by (\ref{eqn:ergLong}) it follows that for any $\varepsilon>0$
\begin{equation}
\begin{split}
   |\mathrm{tr}(A_\mathcal{T}|_{T\cdot B)})| &= \left|\int_{T\cdot B}  \mathcal{M}_u A\circ \varphi_t(\mathcal{T})\, dt\right|+ O(\partial(T\cdot B)) \\
   &\hspace{.3in}\leq \max\left\{C_{\varepsilon,A} \tau_r(A)T^{d\frac{\lambda_r^+}{\lambda_1^+}+d\varepsilon},O(T^{d-1})\right\}
  \end{split}
  \end{equation}
independent of which bump function $u$ was used by (\ref{eqn:bumpInd}). Thus by (\ref{eqn:ergInt4}) and (\ref{eqn:ergInt5}), if $d\lambda_r^+\geq (d-1)\lambda_1^+$, then
$$\limsup_{T\rightarrow \infty}\frac{\log |\mathrm{tr}(A_\mathcal{T}|_{T\cdot B})|}{\log T} \leq \frac{\lambda^+_r}{\lambda_1^+}d.$$
For $\varepsilon'>0$ we choose a set $B_{\varepsilon'}$ as in \S \ref{subsec:lowBnds} along with the sequence of times $T_i\rightarrow \infty$ and vectors $\tau_i\in\mathbb{R}^d$. By construction, $T_i\cdot( B_{\varepsilon'}+\tau_i) = \mathcal{O}^-_\mathcal{T}(E_i)$, where $E_i\subset \mathbb{R}^d$ is some measurable subset of finite volume. Thus, the results of \S \ref{subsec:lowBnds} along with (\ref{eqn:bumpInd})-(\ref{eqn:traceId}) imply that 
$$\limsup_{i\rightarrow \infty}\frac{\log |\mathrm{tr}(A_\mathcal{T}|_{T_i\cdot ( B_{\varepsilon'}+\tau_i)})|}{\log T_i} = \frac{\lambda^+_r}{\lambda_1^+}d.$$
\section{Variations on half hexagons}
\label{sec:experiments}
Let me close by giving some experimental results. Consider the two substitution rules on the half hexagons in Figure \ref{fig:HalfHexes} in the introduction and which were studied in \S \ref{sec:interlude}. The first substitution rule depicted is the classical substitution rule in the half-hexagon with expansion constant 2. The eigenvalues of the corresponding substitution matrix are $4,2,1,1,-1,-1$. The second substitution rules has expansion constant 4 and the eigenvalues for the corresponding substitution matrix are $16, 7\pm i\sqrt{3}, 2,2,2$. Note that $|7\pm i\sqrt{3}|> 4 = \sqrt{16}$, so the second substitution rule has ``rapidly-expanding'' eigenvalues.

For $p\in (0,1)$, let $\mu_p$ be the Bernoulli measure on $\Sigma_2$ which gives the cylinder set $\mu_p(C_1) = p$ and $\mu_p(C_2) = 1-p$, where $C_i = \{x\in \Sigma_2: x_1 = i\}$. The typical points for the measure $\mu_p$ then give tiling spaces $\Omega_x$ which are obtained from tilings which were constructed, on average by applications of the first substitution in Figure \ref{fig:HalfHexesGIFS} with probability $p$ and the second substitution from Figure \ref{fig:HalfHexesGIFS} with probability $1-p$. Note that from the graphs in Figure \ref{fig:HalfHexesGIFS} it is easy to recover the two matrices which are used to compute the trace cocycle.

Figure \ref{fig:spectrum} shows the (normalized) spectrum as a function of $p$. It is normalized because what is plotted are the ratios $2\lambda^+_i/\lambda^+_1$, which are the relevant exponents in the main results of this paper. Perhaps not surprisingly, when $p>1/2$, there seem to be a pair of (normalized) Lyapunov exponents greater than 1, meaning that there are non-trivial deviations of ergodic averages for tilings in a typical tiling space $\Omega_x$ with respect to the measure $\mu_p$. In particular, as pointed out in the first item of Remark \ref{rem:2}, this shows the rate of convergence in the Shubin-Bellissard formula for the integrated density of states for any Lipschitz kernels of finite range.
\begin{figure}[h]
 \centering
 \includegraphics[width = 5.5in]{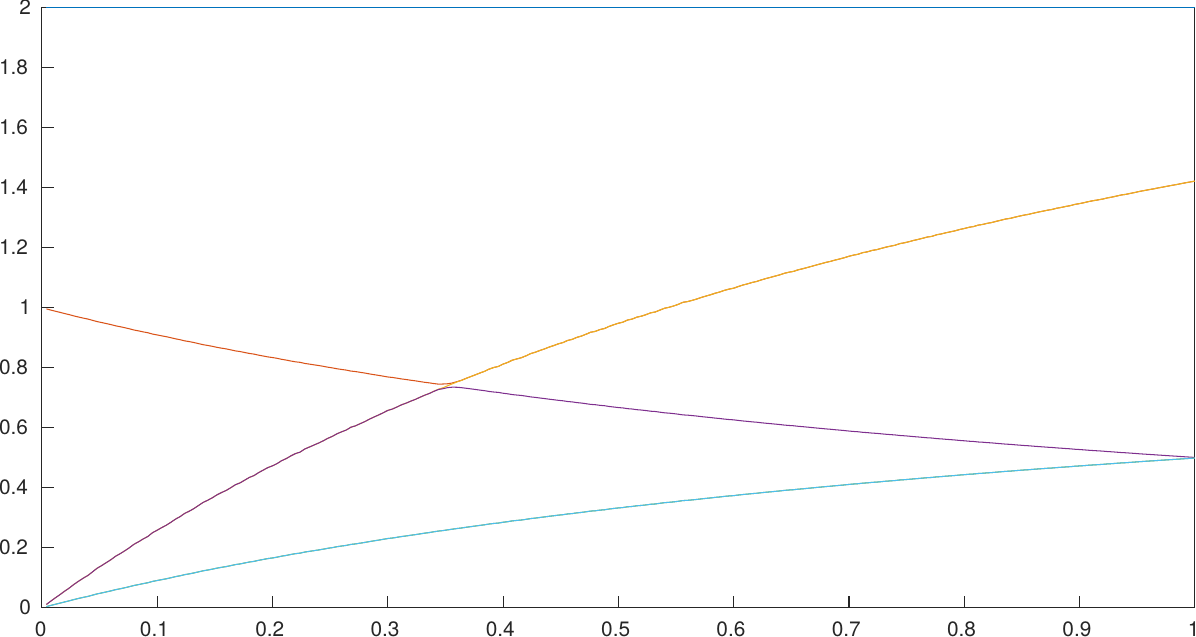}
  \caption{ Lyapunov spectrum for the measures $\mu_p$ as a function of $p$.}
  \label{fig:spectrum}
\end{figure}

\bibliographystyle{amsalpha}
\bibliography{biblio}

\end{document}